\providecommand{\keywords}[1]{\textbf{\textit{Keywords.}} #1}
\providecommand{\AMSclass}[1]{\textbf{\textit{AMS classification.}} #1}
\newcommand{\ncun}{
\begin{tikzpicture}[line cap=round,line join=round,>=triangle 45,x=0.3cm,y=0.3cm]
\clip(-0.2,0.) rectangle (0.2,1.);
\draw [line width=0.8pt] (0.,1.)-- (0.,0.);
\end{tikzpicture}}
\newcommand{\ncdeuxun}{
\begin{tikzpicture}[line cap=round,line join=round,>=triangle 45,x=0.3cm,y=0.3cm]
\clip(-0.2,0.) rectangle (1.2,1.);
\draw [line width=0.8pt] (0.,1.)-- (0.,0.);
\draw [line width=0.8pt] (0.,0.)-- (1.,0.);
\draw [line width=0.8pt] (1.,0.)-- (1.,1.);
\end{tikzpicture}}
\newcommand{\ncdeuxdeux}{
\begin{tikzpicture}[line cap=round,line join=round,>=triangle 45,x=0.3cm,y=0.3cm]
\clip(-0.2,0.) rectangle (1.2,1.);
\draw [line width=0.8pt] (0.,1.)-- (0.,0.);
\draw [line width=0.8pt] (1.,0.)-- (1.,1.);
\end{tikzpicture}}
\newcommand{\nctroisun}{
\begin{tikzpicture}[line cap=round,line join=round,>=triangle 45,x=0.3cm,y=0.3cm]
\clip(-0.2,0.) rectangle (2.2,1.);
\draw [line width=0.8pt] (0.,1.)-- (0.,0.);
\draw [line width=0.8pt] (0.,0.)-- (2.,0.);
\draw [line width=0.8pt] (1.,0.)-- (1.,1.);
\draw [line width=0.8pt] (2.,0.)-- (2.,1.);
\end{tikzpicture}}
\newcommand{\nctroisdeux}{
\begin{tikzpicture}[line cap=round,line join=round,>=triangle 45,x=0.3cm,y=0.3cm]
\clip(-0.2,0.) rectangle (2.2,1.);
\draw [line width=0.8pt] (0.,1.)-- (0.,0.);
\draw [line width=0.8pt] (0.,0.)-- (1.,0.);
\draw [line width=0.8pt] (1.,0.)-- (1.,1.);
\draw [line width=0.8pt] (2.,0.)-- (2.,1.);
\end{tikzpicture}}
\newcommand{\nctroistrois}{
\begin{tikzpicture}[line cap=round,line join=round,>=triangle 45,x=0.3cm,y=0.3cm]
\clip(-0.2,0.) rectangle (2.2,1.);
\draw [line width=0.8pt] (0.,1.)-- (0.,0.);
\draw [line width=0.8pt] (1.,0.)-- (2.,0.);
\draw [line width=0.8pt] (1.,0.)-- (1.,1.);
\draw [line width=0.8pt] (2.,0.)-- (2.,1.);
\end{tikzpicture}}
\newcommand{\nctroisquatre}{
\begin{tikzpicture}[line cap=round,line join=round,>=triangle 45,x=0.3cm,y=0.3cm]
\clip(-0.2,0.) rectangle (2.2,1.5);
\draw [line width=0.8pt] (0.,1.)-- (0.,0.);
\draw [line width=0.8pt] (0.,0.)-- (2.,0.);
\draw [line width=0.8pt] (1.,0.5)-- (1.,1.5);
\draw [line width=0.8pt] (2.,0.)-- (2.,1.);
\end{tikzpicture}}
\newcommand{\nctroiscinq}{
\begin{tikzpicture}[line cap=round,line join=round,>=triangle 45,x=0.3cm,y=0.3cm]
\clip(-0.1,0.) rectangle (2.2,1.);
\draw [line width=0.8pt] (0.,1.)-- (0.,0.);
\draw [line width=0.8pt] (1.,0.)-- (1.,1.);
\draw [line width=0.8pt] (2.,0.)-- (2.,1.);
\end{tikzpicture}}
\newcommand{\ncquatreun}{
\begin{tikzpicture}[line cap=round,line join=round,>=triangle 45,x=0.3cm,y=0.3cm]
\clip(-0.2,0.) rectangle (3.2,1.);
\draw [line width=0.8pt] (0.,1.)-- (0.,0.);
\draw [line width=0.8pt] (0.,0.)-- (3.,0.);
\draw [line width=0.8pt] (1.,0.)-- (1.,1.);
\draw [line width=0.8pt] (2.,0.)-- (2.,1.);
\draw [line width=0.8pt] (3.,0.)-- (3.,1.);
\end{tikzpicture}}
\newcommand{\ncquatredeux}{
\begin{tikzpicture}[line cap=round,line join=round,>=triangle 45,x=0.3cm,y=0.3cm]
\clip(-0.2,0.) rectangle (3.2,1.);
\draw [line width=0.8pt] (0.,1.)-- (0.,0.);
\draw [line width=0.8pt] (0.,0.)-- (2.,0.);
\draw [line width=0.8pt] (1.,0.)-- (1.,1.);
\draw [line width=0.8pt] (2.,0.)-- (2.,1.);
\draw [line width=0.8pt] (3.,0.)-- (3.,1.);
\end{tikzpicture}}
\newcommand{\ncquatretrois}{
\begin{tikzpicture}[line cap=round,line join=round,>=triangle 45,x=0.3cm,y=0.3cm]
\clip(-0.2,0.) rectangle (3.2,1.);
\draw [line width=0.8pt] (0.,1.)-- (0.,0.);
\draw [line width=0.8pt] (1.,0.)-- (3.,0.);
\draw [line width=0.8pt] (1.,0.)-- (1.,1.);
\draw [line width=0.8pt] (2.,0.)-- (2.,1.);
\draw [line width=0.8pt] (3.,0.)-- (3.,1.);
\end{tikzpicture}}
\newcommand{\ncquatrequatre}{
\begin{tikzpicture}[line cap=round,line join=round,>=triangle 45,x=0.3cm,y=0.3cm]
\clip(-0.2,0.) rectangle (3.2,1.5);
\draw [line width=0.8pt] (0.,1.)-- (0.,0.);
\draw [line width=0.8pt] (0.,0.)-- (3.,0.);
\draw [line width=0.8pt] (1.,0.5)-- (1.,1.5);
\draw [line width=0.8pt] (2.,0.)-- (2.,1.);
\draw [line width=0.8pt] (3.,0.)-- (3.,1.);
\end{tikzpicture}}
\newcommand{\ncquatrecinq}{
\begin{tikzpicture}[line cap=round,line join=round,>=triangle 45,x=0.3cm,y=0.3cm]
\clip(-0.2,0.) rectangle (3.2,1.5);
\draw [line width=0.8pt] (0.,1.)-- (0.,0.);
\draw [line width=0.8pt] (0.,0.)-- (3.,0.);
\draw [line width=0.8pt] (1.,0.)-- (1.,1.);
\draw [line width=0.8pt] (2.,0.5)-- (2.,1.5);
\draw [line width=0.8pt] (3.,0.)-- (3.,1.);
\end{tikzpicture}}
\newcommand{\ncquatresix}{
\begin{tikzpicture}[line cap=round,line join=round,>=triangle 45,x=0.3cm,y=0.3cm]
\clip(-0.2,0.) rectangle (3.2,1.);
\draw [line width=0.8pt] (0.,1.)-- (0.,0.);
\draw [line width=0.8pt] (0.,0.)-- (1.,0.);
\draw [line width=0.8pt] (2.,0.)-- (3.,0.);
\draw [line width=0.8pt] (1.,0.)-- (1.,1.);
\draw [line width=0.8pt] (2.,0.)-- (2.,1.);
\draw [line width=0.8pt] (3.,0.)-- (3.,1.);
\end{tikzpicture}}
\newcommand{\ncquatresept}{
\begin{tikzpicture}[line cap=round,line join=round,>=triangle 45,x=0.3cm,y=0.3cm]
\clip(-0.2,0.) rectangle (3.2,1.5);
\draw [line width=0.8pt] (0.,1.)-- (0.,0.);
\draw [line width=0.8pt] (0.,0.)-- (3.,0.);
\draw [line width=0.8pt] (1.,0.5)-- (2.,0.5);
\draw [line width=0.8pt] (1.,0.5)-- (1.,1.5);
\draw [line width=0.8pt] (2.,0.5)-- (2.,1.5);
\draw [line width=0.8pt] (3.,0.)-- (3.,1.);
\end{tikzpicture}}
\newcommand{\ncquatrehuit}{
\begin{tikzpicture}[line cap=round,line join=round,>=triangle 45,x=0.3cm,y=0.3cm]
\clip(-0.2,0.) rectangle (3.2,1.);
\draw [line width=0.8pt] (0.,1.)-- (0.,0.);
\draw [line width=0.8pt] (0.,0.)-- (1.,0.);
\draw [line width=0.8pt] (1.,0.)-- (1.,1.);
\draw [line width=0.8pt] (2.,0.)-- (2.,1.);
\draw [line width=0.8pt] (3.,0.)-- (3.,1.);
\end{tikzpicture}}
\newcommand{\ncquatreneuf}{
\begin{tikzpicture}[line cap=round,line join=round,>=triangle 45,x=0.3cm,y=0.3cm]
\clip(-0.2,0.) rectangle (3.2,1.);
\draw [line width=0.8pt] (0.,1.)-- (0.,0.);
\draw [line width=0.8pt] (1.,0.)-- (2.,0.);
\draw [line width=0.8pt] (1.,0.)-- (1.,1.);
\draw [line width=0.8pt] (2.,0.)-- (2.,1.);
\draw [line width=0.8pt] (3.,0.)-- (3.,1.);
\end{tikzpicture}}
\newcommand{\ncquatredix}{
\begin{tikzpicture}[line cap=round,line join=round,>=triangle 45,x=0.3cm,y=0.3cm]
\clip(-0.2,0.) rectangle (3.2,1.);
\draw [line width=0.8pt] (0.,1.)-- (0.,0.);
\draw [line width=0.8pt] (2.,0.)-- (3.,0.);
\draw [line width=0.8pt] (1.,0.)-- (1.,1.);
\draw [line width=0.8pt] (2.,0.)-- (2.,1.);
\draw [line width=0.8pt] (3.,0.)-- (3.,1.);
\end{tikzpicture}}
\newcommand{\ncquatreonze}{
\begin{tikzpicture}[line cap=round,line join=round,>=triangle 45,x=0.3cm,y=0.3cm]
\clip(-0.2,0.) rectangle (3.2,1.5);
\draw [line width=0.8pt] (0.,1.)-- (0.,0.);
\draw [line width=0.8pt] (0.,0.)-- (2.,0.);
\draw [line width=0.8pt] (1.,0.5)-- (1.,1.5);
\draw [line width=0.8pt] (2.,0.)-- (2.,1.);
\draw [line width=0.8pt] (3.,0.)-- (3.,1.);
\end{tikzpicture}}
\newcommand{\ncquatredouze}{
\begin{tikzpicture}[line cap=round,line join=round,>=triangle 45,x=0.3cm,y=0.3cm]
\clip(-0.2,0.) rectangle (3.2,1.5);
\draw [line width=0.8pt] (0.,1.)-- (0.,0.);
\draw [line width=0.8pt] (1.,0.)-- (3.,0.);
\draw [line width=0.8pt] (1.,0.)-- (1.,1.);
\draw [line width=0.8pt] (2.,0.5)-- (2.,1.5);
\draw [line width=0.8pt] (3.,0.)-- (3.,1.);
\end{tikzpicture}}
\newcommand{\ncquatretreize}{
\begin{tikzpicture}[line cap=round,line join=round,>=triangle 45,x=0.3cm,y=0.3cm]
\clip(-0.2,0.) rectangle (3.2,1.5);
\draw [line width=0.8pt] (0.,1.)-- (0.,0.);
\draw [line width=0.8pt] (0.,0.)-- (3.,0.);
\draw [line width=0.8pt] (1.,0.5)-- (1.,1.5);
\draw [line width=0.8pt] (2.,0.5)-- (2.,1.5);
\draw [line width=0.8pt] (3.,0.)-- (3.,1.);
\end{tikzpicture}}
\newcommand{\ncquatrequatorze}{
\begin{tikzpicture}[line cap=round,line join=round,>=triangle 45,x=0.3cm,y=0.3cm]
\clip(-0.2,0.) rectangle (3.2,1.);
\draw [line width=0.8pt] (0.,1.)-- (0.,0.);
\draw [line width=0.8pt] (1.,0.)-- (1.,1.);
\draw [line width=0.8pt] (2.,0.)-- (2.,1.);
\draw [line width=0.8pt] (3.,0.)-- (3.,1.);
\end{tikzpicture}}
\newcommand{\nciun}[1]{
\begin{tikzpicture}[line cap=round,line join=round,>=triangle 45,x=0.3cm,y=0.3cm]
\clip(-0.2,0.) rectangle (0.2,1.8);
\draw [line width=0.8pt] (0.,1.)-- (0.,0.);
\draw(0.,1.5) node {\tiny #1};
\end{tikzpicture}}
\newcommand{\ncideuxun}[1]{
\begin{tikzpicture}[line cap=round,line join=round,>=triangle 45,x=0.3cm,y=0.3cm]
\clip(-0.2,0.) rectangle (1.2,1.8);
\draw [line width=0.8pt] (0.,1.)-- (0.,0.);
\draw [line width=0.8pt] (0.,0.)-- (1.,0.);
\draw [line width=0.8pt] (1.,0.)-- (1.,1.);
\draw(0.,1.5) node {\tiny #1};
\end{tikzpicture}}
\newcommand{\ncideuxdeux}[2]{
\begin{tikzpicture}[line cap=round,line join=round,>=triangle 45,x=0.3cm,y=0.3cm]
\clip(-0.2,0.) rectangle (1.2,1.8);
\draw [line width=0.8pt] (0.,1.)-- (0.,0.);
\draw [line width=0.8pt] (1.,0.)-- (1.,1.);
\draw(0.,1.5) node {\tiny #1};
\draw(1.,1.5) node {\tiny #2};
\end{tikzpicture}}
\newcommand{\ncitroisun}[1]{
\begin{tikzpicture}[line cap=round,line join=round,>=triangle 45,x=0.3cm,y=0.3cm]
\clip(-0.2,0.) rectangle (2.2,1.8);
\draw [line width=0.8pt] (0.,1.)-- (0.,0.);
\draw [line width=0.8pt] (0.,0.)-- (2.,0.);
\draw [line width=0.8pt] (1.,0.)-- (1.,1.);
\draw [line width=0.8pt] (2.,0.)-- (2.,1.);
\draw(0.,1.5) node {\tiny #1};
\end{tikzpicture}}
\newcommand{\ncitroisdeux}[2]{
\begin{tikzpicture}[line cap=round,line join=round,>=triangle 45,x=0.3cm,y=0.3cm]
\clip(-0.2,0.) rectangle (2.2,1.8);
\draw [line width=0.8pt] (0.,1.)-- (0.,0.);
\draw [line width=0.8pt] (0.,0.)-- (1.,0.);
\draw [line width=0.8pt] (1.,0.)-- (1.,1.);
\draw [line width=0.8pt] (2.,0.)-- (2.,1.);
\draw(0.,1.5) node {\tiny #1};
\draw(2.,1.5) node {\tiny #2};
\end{tikzpicture}}
\newcommand{\ncitroistrois}[2]{
\begin{tikzpicture}[line cap=round,line join=round,>=triangle 45,x=0.3cm,y=0.3cm]
\clip(-0.2,0.) rectangle (2.2,1.8);
\draw [line width=0.8pt] (0.,1.)-- (0.,0.);
\draw [line width=0.8pt] (1.,0.)-- (2.,0.);
\draw [line width=0.8pt] (1.,0.)-- (1.,1.);
\draw [line width=0.8pt] (2.,0.)-- (2.,1.);
\draw(0.,1.5) node {\tiny #1};
\draw(1.,1.5) node {\tiny #2};
\end{tikzpicture}}
\newcommand{\ncitroisquatre}[2]{
\begin{tikzpicture}[line cap=round,line join=round,>=triangle 45,x=0.3cm,y=0.3cm]
\clip(-0.2,0.) rectangle (2.2,2.3);
\draw [line width=0.8pt] (0.,1.)-- (0.,0.);
\draw [line width=0.8pt] (0.,0.)-- (2.,0.);
\draw [line width=0.8pt] (1.,0.5)-- (1.,1.5);
\draw [line width=0.8pt] (2.,0.)-- (2.,1.);
\draw(0.,1.5) node {\tiny #1};
\draw(1.,2.) node {\tiny #2};
\end{tikzpicture}}
\newcommand{\ncitroiscinq}[3]{
\begin{tikzpicture}[line cap=round,line join=round,>=triangle 45,x=0.3cm,y=0.3cm]
\clip(-0.1,0.) rectangle (2.2,1.8);
\draw [line width=0.8pt] (0.,1.)-- (0.,0.);
\draw [line width=0.8pt] (1.,0.)-- (1.,1.);
\draw [line width=0.8pt] (2.,0.)-- (2.,1.);
\draw(0.,1.5) node {\tiny #1};
\draw(1.,1.5) node {\tiny #2};
\draw(2.,1.5) node {\tiny #3};
\end{tikzpicture}}
\newcommand{\nciquatreun}[1]{
\begin{tikzpicture}[line cap=round,line join=round,>=triangle 45,x=0.3cm,y=0.3cm]
\clip(-0.2,0.) rectangle (3.2,1.8);
\draw [line width=0.8pt] (0.,1.)-- (0.,0.);
\draw [line width=0.8pt] (0.,0.)-- (3.,0.);
\draw [line width=0.8pt] (1.,0.)-- (1.,1.);
\draw [line width=0.8pt] (2.,0.)-- (2.,1.);
\draw [line width=0.8pt] (3.,0.)-- (3.,1.);
\draw(0.,1.5) node {\tiny #1};
\end{tikzpicture}}
\newcommand{\nciquatredeux}[2]{
\begin{tikzpicture}[line cap=round,line join=round,>=triangle 45,x=0.3cm,y=0.3cm]
\clip(-0.2,0.) rectangle (3.2,1.8);
\draw [line width=0.8pt] (0.,1.)-- (0.,0.);
\draw [line width=0.8pt] (0.,0.)-- (2.,0.);
\draw [line width=0.8pt] (1.,0.)-- (1.,1.);
\draw [line width=0.8pt] (2.,0.)-- (2.,1.);
\draw [line width=0.8pt] (3.,0.)-- (3.,1.);
\draw(0.,1.5) node {\tiny #1};
\draw(3.,1.5) node {\tiny #2};
\end{tikzpicture}}
\newcommand{\nciquatretrois}[2]{
\begin{tikzpicture}[line cap=round,line join=round,>=triangle 45,x=0.3cm,y=0.3cm]
\clip(-0.2,0.) rectangle (3.2,1.8);
\draw [line width=0.8pt] (0.,1.)-- (0.,0.);
\draw [line width=0.8pt] (1.,0.)-- (3.,0.);
\draw [line width=0.8pt] (1.,0.)-- (1.,1.);
\draw [line width=0.8pt] (2.,0.)-- (2.,1.);
\draw [line width=0.8pt] (3.,0.)-- (3.,1.);
\draw(0.,1.5) node {\tiny #1};
\draw(1.,1.5) node {\tiny #2};
\end{tikzpicture}}
\newcommand{\nciquatrequatre}[2]{
\begin{tikzpicture}[line cap=round,line join=round,>=triangle 45,x=0.3cm,y=0.3cm]
\clip(-0.2,0.) rectangle (3.2,2.3);
\draw [line width=0.8pt] (0.,1.)-- (0.,0.);
\draw [line width=0.8pt] (0.,0.)-- (3.,0.);
\draw [line width=0.8pt] (1.,0.5)-- (1.,1.5);
\draw [line width=0.8pt] (2.,0.)-- (2.,1.);
\draw [line width=0.8pt] (3.,0.)-- (3.,1.);
\draw(0.,1.5) node {\tiny #1};
\draw(1.,2.) node {\tiny #2};
\end{tikzpicture}}
\newcommand{\nciquatrecinq}[2]{
\begin{tikzpicture}[line cap=round,line join=round,>=triangle 45,x=0.3cm,y=0.3cm]
\clip(-0.2,0.) rectangle (3.2,2.3);
\draw [line width=0.8pt] (0.,1.)-- (0.,0.);
\draw [line width=0.8pt] (0.,0.)-- (3.,0.);
\draw [line width=0.8pt] (1.,0.)-- (1.,1.);
\draw [line width=0.8pt] (2.,0.5)-- (2.,1.5);
\draw [line width=0.8pt] (3.,0.)-- (3.,1.);
\draw(0.,1.5) node {\tiny #1};
\draw(2.,2.) node {\tiny #2};
\end{tikzpicture}}
\newcommand{\nciquatresix}[2]{
\begin{tikzpicture}[line cap=round,line join=round,>=triangle 45,x=0.3cm,y=0.3cm]
\clip(-0.2,0.) rectangle (3.2,1.8);
\draw [line width=0.8pt] (0.,1.)-- (0.,0.);
\draw [line width=0.8pt] (0.,0.)-- (1.,0.);
\draw [line width=0.8pt] (2.,0.)-- (3.,0.);
\draw [line width=0.8pt] (1.,0.)-- (1.,1.);
\draw [line width=0.8pt] (2.,0.)-- (2.,1.);
\draw [line width=0.8pt] (3.,0.)-- (3.,1.);
\draw(0.,1.5) node {\tiny #1};
\draw(2.,1.5) node {\tiny #2};
\end{tikzpicture}}
\newcommand{\nciquatresept}[2]{
\begin{tikzpicture}[line cap=round,line join=round,>=triangle 45,x=0.3cm,y=0.3cm]
\clip(-0.2,0.) rectangle (3.2,2.3);
\draw [line width=0.8pt] (0.,1.)-- (0.,0.);
\draw [line width=0.8pt] (0.,0.)-- (3.,0.);
\draw [line width=0.8pt] (1.,0.5)-- (2.,0.5);
\draw [line width=0.8pt] (1.,0.5)-- (1.,1.5);
\draw [line width=0.8pt] (2.,0.5)-- (2.,1.5);
\draw [line width=0.8pt] (3.,0.)-- (3.,1.);
\draw(0.,1.5) node {\tiny #1};
\draw(1.,2.) node {\tiny #2};
\end{tikzpicture}}
\newcommand{\nciquatrehuit}[3]{
\begin{tikzpicture}[line cap=round,line join=round,>=triangle 45,x=0.3cm,y=0.3cm]
\clip(-0.2,0.) rectangle (3.2,1.8);
\draw [line width=0.8pt] (0.,1.)-- (0.,0.);
\draw [line width=0.8pt] (0.,0.)-- (1.,0.);
\draw [line width=0.8pt] (1.,0.)-- (1.,1.);
\draw [line width=0.8pt] (2.,0.)-- (2.,1.);
\draw [line width=0.8pt] (3.,0.)-- (3.,1.);
\draw(0.,1.5) node {\tiny #1};
\draw(2.,1.5) node {\tiny #2};
\draw(3.,1.5) node {\tiny #3};
\end{tikzpicture}}
\newcommand{\nciquatreneuf}[3]{
\begin{tikzpicture}[line cap=round,line join=round,>=triangle 45,x=0.3cm,y=0.3cm]
\clip(-0.2,0.) rectangle (3.2,1.8);
\draw [line width=0.8pt] (0.,1.)-- (0.,0.);
\draw [line width=0.8pt] (1.,0.)-- (2.,0.);
\draw [line width=0.8pt] (1.,0.)-- (1.,1.);
\draw [line width=0.8pt] (2.,0.)-- (2.,1.);
\draw [line width=0.8pt] (3.,0.)-- (3.,1.);
\draw(0.,1.5) node {\tiny #1};
\draw(1.,1.5) node {\tiny #2};
\draw(3.,1.5) node {\tiny #3};
\end{tikzpicture}}
\newcommand{\nciquatredix}[3]{
\begin{tikzpicture}[line cap=round,line join=round,>=triangle 45,x=0.3cm,y=0.3cm]
\clip(-0.2,0.) rectangle (3.2,1.8);
\draw [line width=0.8pt] (0.,1.)-- (0.,0.);
\draw [line width=0.8pt] (2.,0.)-- (3.,0.);
\draw [line width=0.8pt] (1.,0.)-- (1.,1.);
\draw [line width=0.8pt] (2.,0.)-- (2.,1.);
\draw [line width=0.8pt] (3.,0.)-- (3.,1.);
\draw(0.,1.5) node {\tiny #1};
\draw(1.,1.5) node {\tiny #2};
\draw(2.,1.5) node {\tiny #3};
\end{tikzpicture}}
\newcommand{\nciquatreonze}[3]{
\begin{tikzpicture}[line cap=round,line join=round,>=triangle 45,x=0.3cm,y=0.3cm]
\clip(-0.2,0.) rectangle (3.2,2.3);
\draw [line width=0.8pt] (0.,1.)-- (0.,0.);
\draw [line width=0.8pt] (0.,0.)-- (2.,0.);
\draw [line width=0.8pt] (1.,0.5)-- (1.,1.5);
\draw [line width=0.8pt] (2.,0.)-- (2.,1.);
\draw [line width=0.8pt] (3.,0.)-- (3.,1.);
\draw(0.,1.5) node {\tiny #1};
\draw(1.,2.) node {\tiny #2};
\draw(3.,1.5) node {\tiny #3};
\end{tikzpicture}}
\newcommand{\nciquatredouze}[3]{
\begin{tikzpicture}[line cap=round,line join=round,>=triangle 45,x=0.3cm,y=0.3cm]
\clip(-0.2,0.) rectangle (3.2,2.3);
\draw [line width=0.8pt] (0.,1.)-- (0.,0.);
\draw [line width=0.8pt] (1.,0.)-- (3.,0.);
\draw [line width=0.8pt] (1.,0.)-- (1.,1.);
\draw [line width=0.8pt] (2.,0.5)-- (2.,1.5);
\draw [line width=0.8pt] (3.,0.)-- (3.,1.);
\draw(0.,1.5) node {\tiny #1};
\draw(1.,1.5) node {\tiny #2};
\draw(2.,2.) node {\tiny #3};
\end{tikzpicture}}
\newcommand{\nciquatretreize}[3]{
\begin{tikzpicture}[line cap=round,line join=round,>=triangle 45,x=0.3cm,y=0.3cm]
\clip(-0.2,0.) rectangle (3.2,2.3);
\draw [line width=0.8pt] (0.,1.)-- (0.,0.);
\draw [line width=0.8pt] (0.,0.)-- (3.,0.);
\draw [line width=0.8pt] (1.,0.5)-- (1.,1.5);
\draw [line width=0.8pt] (2.,0.5)-- (2.,1.5);
\draw [line width=0.8pt] (3.,0.)-- (3.,1.);
\draw(0.,1.5) node {\tiny #1};
\draw(1.,2.) node {\tiny #2};
\draw(2.,2.) node {\tiny #3};
\end{tikzpicture}}
\newcommand{\ncex}{
\begin{tikzpicture}[line cap=round,line join=round,>=triangle 45,x=0.3cm,y=0.3cm]
\clip(-0.2,0.) rectangle (14.2,2.8);
\draw [line width=0.8pt] (0.,0.)-- (0.,1.);
\draw [line width=0.8pt] (1.,0.5)-- (1.,1.5);
\draw [line width=0.8pt] (2.,1.)-- (2.,2.);
\draw [line width=0.8pt] (3.,0.5)-- (3.,1.5);
\draw [line width=0.8pt] (4.,0.)-- (4.,1.);
\draw [line width=0.8pt] (5.,0.5)-- (5.,1.5);
\draw [line width=0.8pt] (6.,0.5)-- (6.,1.5);
\draw [line width=0.8pt] (7.,0.5)-- (7.,1.5);
\draw [line width=0.8pt] (8.,0.)-- (8.,1.);
\draw [line width=0.8pt] (9.,0.5)-- (9.,1.5);
\draw [line width=0.8pt] (10.,0.5)-- (10.,1.5);
\draw [line width=0.8pt] (11.,0.)-- (11.,1.);
\draw [line width=0.8pt] (12.,0.)-- (12.,1.);
\draw [line width=0.8pt] (13.,0.)-- (13.,1.);
\draw [line width=0.8pt] (14.,0.)-- (14.,1.);
\draw [line width=0.8pt] (0.,0.)-- (12.,0.);
\draw [line width=0.8pt] (13.,0.)-- (14.,0.);
\draw [line width=0.8pt] (1.,0.5)-- (3.,0.5);
\draw [line width=0.8pt] (5.,0.5)-- (7.,0.5);
\draw(0.,1.5) node {\tiny 1};
\draw(1.,2) node {\tiny 2};
\draw(2.,2.5) node {\tiny 3};
\draw(5.,2.) node {\tiny 4};
\draw(9.,2.) node {\tiny 5};
\draw(10.,2.) node {\tiny 6};
\draw(13.,1.5) node {\tiny 7};
\end{tikzpicture}}
\newcommand{\ncexdeux}{
\begin{tikzpicture}[line cap=round,line join=round,>=triangle 45,x=0.3cm,y=0.3cm]
\clip(-0.2,0.) rectangle (6.2,1.8);
\draw [line width=0.8pt] (0.,0.)-- (0.,1.);
\draw [line width=0.8pt] (1.,0.)-- (1.,1.);
\draw [line width=0.8pt] (2.,0.)-- (2.,1.);
\draw [line width=0.8pt] (3.,0.)-- (3.,1.);
\draw [line width=0.8pt] (4.,0.)-- (4.,1.);
\draw [line width=0.8pt] (5.,0.)-- (5.,1.);
\draw [line width=0.8pt] (6.,0.)-- (6.,1.);
\draw [line width=0.8pt] (0.,0.)-- (4.,0.);
\draw [line width=0.8pt] (5.,0.)-- (6.,0.);
\draw(0.,1.5) node {\tiny 1};
\draw(5.,1.5) node {\tiny 7};
\end{tikzpicture}}
\newcommand{\ncextrois}{
\begin{tikzpicture}[line cap=round,line join=round,>=triangle 45,x=0.3cm,y=0.3cm]
\clip(-0.2,0.) rectangle (15.2,2.8);
\draw [line width=0.8pt] (0.,0.)-- (0.,1.);
\draw [line width=0.8pt] (1.,0.5)-- (1.,1.5);
\draw [line width=0.8pt] (2.,1.)-- (2.,2.);
\draw [line width=0.8pt] (3.,0.5)-- (3.,1.5);
\draw [line width=0.8pt] (4.,0.)-- (4.,1.);
\draw [line width=0.8pt] (5.,0.5)-- (5.,1.5);
\draw [line width=0.8pt] (6.,0.5)-- (6.,1.5);
\draw [line width=0.8pt] (7.,0.5)-- (7.,1.5);
\draw [line width=0.8pt] (8.,0.)-- (8.,1.);
\draw [line width=0.8pt] (9.,0.5)-- (9.,1.5);
\draw [line width=0.8pt] (10.,0.5)-- (10.,1.5);
\draw [line width=0.8pt] (11.,0.)-- (11.,1.);
\draw [line width=0.8pt] (12.,0.)-- (12.,1.);
\draw [line width=0.8pt] (13.,0.)-- (13.,1.);
\draw [line width=0.8pt] (14.,0.)-- (14.,1.);
\draw [line width=0.8pt] (15.,0.)-- (15.,1.);
\draw [line width=0.8pt] (0.,0.)-- (12.,0.);
\draw [line width=0.8pt] (13.,0.)-- (14.,0.);
\draw [line width=0.8pt] (1.,0.5)-- (3.,0.5);
\draw [line width=0.8pt] (5.,0.5)-- (7.,0.5);
\draw(0.,1.5) node {\tiny 1};
\draw(1.,2) node {\tiny 2};
\draw(2.,2.5) node {\tiny 3};
\draw(5.,2.) node {\tiny 4};
\draw(9.,2.) node {\tiny 5};
\draw(10.,2.) node {\tiny 6};
\draw(13.,1.5) node {\tiny 7};
\draw(15.,1.5) node {\tiny 8};
\end{tikzpicture}}
\theoremstyle{plain}
\newtheorem{theo}{Theorem}[section]
\newtheorem{lemma}[theo]{Lemma}
\newtheorem{cor}[theo]{Corollary}
\newtheorem{prop}[theo]{Proposition}
\newtheorem{defi}[theo]{Definition}
\theoremstyle{remark}
\newtheorem{remark}{Remark}[section]
\newtheorem{notation}{Notations}[section]
\newtheorem{example}{Example}[section]
\newcommand{\rond}[1]{*++[o][F-]{#1}}
\newcommand{\K}{\mathbb{K}}
\newcommand{\N}{\mathbb{N}}
\newcommand{\bfG}{\mathbf{G}}
\newcommand{\id}{\mathrm{Id}}
\newcommand{\com}{\mathbf{Com}}
\newcommand{\bfP}{\mathbf{P}}
\newcommand{\bfQ}{\mathbf{Q}}
\newcommand{\eq}{\mathcal{E}}
\newcommand{\cl}{\mathrm{cl}}
\newcommand{\sym}{\mathfrak{S}}
\newcommand{\Char}{\mathrm{Char}}
\newcommand{\tdelta}{\tilde{\Delta}}
\newcommand{\vect}{\mathrm{Vect}}
\renewcommand{\ker}{\mathrm{Ker}}
\newcommand{\calF}{\mathcal{F}}
\newcommand{\calH}{\mathcal{H}}
\newcommand{\bfH}{\mathbf{H}}
\newcommand{\ncp}{\mathcal{NCP}}
\newcommand{\bfncp}{\mathbf{NCP}}
\newcommand{\cat}{\mathrm{cat}}
\newcommand{\coinv}{\mathrm{coInv}}
\newcommand{\bl}{\mathrm{bl}}
\newcommand{\base}{\mathrm{Base}}
\newcommand{\ext}{\mathrm{le}}
\newcommand{\alg}{\K[\ncp]}
\newcommand{\Alg}{\com \circ \bfncp}
\newcommand{\calP}{\mathcal{P}}
\newcommand{\set}{\mathbf{Set}}
\newcommand{\Vect}{\mathbf{Vect}}
\newcommand{\bloc}{\mathrm{Bl}}
\newcommand{\lin}{\mathrm{Lin}}
\newcommand{\linstr}{\lin^s}
\newcommand{\GFDB}{G_{\mathrm{FdB}}}
\newcommand{\HFDB}{H_{\mathrm{FdB}}}
\newcommand{\phiFDB}{\Phi_{\mathrm{FdB}}}
\newcommand{\parti}{\mathrm{Part}}
\begin{document}

\title{Cointeraction on noncrossing partitions and related polynomial invariants}
\date{}
\author{Lo\"ic Foissy}
\affil{\small{Univ. Littoral Côte d'Opale, UR 2597
LMPA, Laboratoire de Mathématiques Pures et Appliquées Joseph Liouville
F-62100 Calais, France}.\\ Email: \texttt{foissy@univ-littoral.fr}}

\maketitle

\begin{abstract}
We study the structure of two cointeracting bialgebras on noncrossing partitions appearing in the theory of free probability. 
The first coproduct is given by separation of the blocks of the partitions into two parts, with respect to the nestings, while the second one is given by fusion of blocks. 
This structure implies the existence of a unique polynomial invariant $\phi_\ncp$ respecting the product and both coproducts.
We give a combinatorial interpretation of $\phi_\ncp$, study its values at -1 and use it for the computation of the antipode. 
We also give several results on its coefficients when applied to noncrossing partitions with no nesting. 
This leads to unexpected links with harmonic nested sums, Riordan arrays, composition of formal series and generalized Stirling numbers. 
This polynomial invariant is shown to be related to other ones, counting increasing or strictly increasing maps for the nesting order on noncrossing partitions, through the action of several characters.
\end{abstract}

\keywords{Noncrossing partitions; polynomial invariants; cointeracting bialgebras.}\\

\AMSclass{16T05 16T30 05A17}

\tableofcontents

\section*{Introduction}

Noncrossing partitions have a rich combinatorics which has been used for example in the theory of free probability \cite{Biane2002,Biane1997,Nica2006,Simion2000}. 
They can be given several algebraic structures which are used to study relations between cumulants and moments in the theory of free probability \cite{Celestino2021,Celestino2022}.
In the present article, we study the double bialgebraic structure on noncrossing partitions, already defined in \cite{Foissy38} with operadic tools, and its applications to polynomial invariants. 
The presentation is based here on our construction of contraction-extraction coproducts \cite{Foissy41} in the context of species.

We shall denote by $\ncp$ the set of noncrossing partitions:
\[\ncp=\left\{\begin{array}{c}
\ncun;\: \ncdeuxun,\:\ncdeuxdeux;\:\nctroisun,\:\nctroisdeux,\:\nctroistrois,\:\nctroisquatre,\:\nctroiscinq;\:\\
\ncquatreun,\:\ncquatredeux,\:\ncquatretrois,\:\ncquatrequatre,\:\ncquatrecinq,\:\ncquatresix,\:\ncquatresept,\:\\
\ncquatrehuit,\:\ncquatreneuf,\:\ncquatredix,\:\ncquatreonze,\:\ncquatredouze,\:\ncquatretreize,\:\ncquatrequatorze,\ldots
\end{array}\right\}.\]
Graphically, noncrossing partitions will be represented with legs (vertical segments), united in blocks by horizontal segments.
The gradation which is used here is not given by the number of legs as usual, but rather by the number of blocks, see Definition \ref{defi2.1}. 
The object which will be considered all along the text is the polynomial algebra generated by $\ncp$, which we denote by $\alg$: a basis of this algebra is given by commutative monomials in noncrossing partitions.
In order to avoid confusion between these monomials and non connected noncrossing partitions, the product of $\alg$ is denoted by $\cdot$. 
For example, in $\alg$, $\ncun \cdot \ncdeuxun$, $\nctroistrois$ and $\nctroisdeux$ are pairwise different. 
The first coproduct, introduced in Proposition \ref{prop2.6}, is given by  separation of the blocks of a noncrossing partitions into two parts respecting the nesting order relation. For example,
  \begin{align*}
\Delta(\ncun)&=\ncun\otimes 1+1\otimes \ncun,\\
\Delta(\ncdeuxun)&=\ncdeuxun \otimes 1+1\otimes \ncdeuxun,\\
\Delta(\ncdeuxdeux)&=\ncdeuxdeux\otimes 1+1\otimes \ncdeuxdeux+2\ncun\otimes \ncun,\\
\Delta(\nctroisun)&=\nctroisun\otimes 1+1\otimes \nctroisun,\\
\Delta(\nctroisdeux)&=\nctroisdeux\otimes 1+1\otimes \nctroisdeux+\ncdeuxun\otimes \ncun+\ncun\otimes \ncdeuxun,\\
\Delta(\nctroistrois)&=\nctroistrois\otimes 1+1\otimes \nctroistrois+\ncdeuxun\otimes \ncun+\ncun\otimes \ncdeuxun,\\
\Delta(\nctroisquatre)&=\nctroisquatre\otimes 1+1\otimes \nctroisquatre+\ncdeuxun\otimes \ncun,\\
\Delta(\nctroiscinq)&=\nctroiscinq\otimes 1+1\otimes \nctroiscinq+\ncun\otimes (\ncun\cdot \ncun+2\ncdeuxdeux)
+3\ncdeuxdeux\otimes \ncun.
\end{align*}
This gives a Hopf algebra, graded in two different ways, firstly by the number of legs of noncrossing partitions, secondly by the number of blocks. 
In particular, the noncrossing partitions $J_n=\{\{1\},\ldots,\{n\}\}$ with only blocks of size 1 generates a Hopf subalgebra isomorphic to the Faà di Bruno Hopf algebra, which group of characters is the group of formal diffeomorphisms tangent to the identity (Proposition \ref{prop2.8}).
Using the formalism defined in \cite{Foissy41}, we give $\alg$ a second coproduct, given by fusion of blocks (Proposition \ref{prop2.10}). For example,
\begin{align*}
\delta(\ncun)&=\ncun\otimes \ncun,\\
\delta(\ncdeuxun)&=\ncdeuxun\otimes \ncdeuxun,\\
\delta(\ncdeuxdeux)&=\ncdeuxdeux\otimes \ncun\cdot \ncun+\ncdeuxun\otimes \ncdeuxdeux,\\
\delta(\nctroisun)&=\nctroisun\otimes \nctroisun,\\
\delta(\nctroisdeux)&=\nctroisdeux\otimes \ncdeuxun\cdot\ncun+\nctroisun\otimes \nctroisdeux,\\
\delta(\nctroistrois)&=\nctroistrois\otimes \ncdeuxun\cdot\ncun+\nctroisun\otimes \nctroistrois,\\
\delta(\nctroisquatre)&=\nctroisquatre\otimes \ncdeuxun\cdot\ncun+\nctroisun\otimes \nctroisquatre,\\
\delta(\nctroiscinq)&=\nctroiscinq\otimes \ncun \cdot \ncun\cdot\ncun
+(\nctroisdeux+\nctroistrois+\nctroisquatre)\otimes \ncun\cdot \ncdeuxdeux+\nctroisun\otimes \nctroiscinq.
\end{align*}
These two coproducts were also studied in \cite{Foissy38}, with applications to free probabilities.
We obtained that $(\alg,m,\Delta,\delta)$ is a bialgebra in cointeraction (we shall here call these objects double bialgebras), that is to say that the bialgebra $(\alg,m,\Delta)$
is a bialgebra in the category of right comodules over the bialgebra $(\alg,m,\delta)$, with the coaction given by $\delta$ itself. The most remarkable property that this implies is
\[(\Delta \otimes \id)\circ \delta=m_{1,3,24}\circ (\delta \otimes \delta)\circ \Delta,\]
where $m_{1,3,24}:\alg^{\otimes 4}\longrightarrow \alg^{\otimes 3}$ send $a_1\otimes a_2\otimes a_3\otimes a_4$ to $a_1\otimes a_3\otimes a_2\cdot a_4$. \\

Let us give a few reminders on the results we obtained on double bialgebras, see \cite{Foissy37,Foissy36} for a detailed exposition. More detailed can be found in the first section of this text.
Recall first that the polynomial algebra $\K[X]$ is also a double bialgebra, with its multiplicative coproducts defined by
\begin{align*}
\Delta(X)&=X\otimes 1+1\otimes X,&\delta(X)&=X\otimes X.
\end{align*}
A polynomial invariant is a bialgebra morphism from $(\alg,m,\Delta)$ to $(\K[X],m,\Delta)$. 
Then, there exists a unique polynomial invariant $\phi_\ncp$ which is also a bialgebra morphism from $(\alg,m,\delta)$ to $(\K[X],m,\delta)$. If $\pi$ is a noncrossing partition,
\[\phi_\ncp(\pi)=\sum_{k=0}^\infty \epsilon_\delta^{\otimes n} \circ \tdelta^{(n-1)}(\pi) \dfrac{X(X-1)\ldots (X-n+1)}{n!},\]
where $\epsilon_\delta$ is the counit of $\delta$ and $\tdelta^{(n-1)}$ are the iterated reduced coproducts associated to $\Delta$.
We give a combinatorial interpretation of $\phi_\ncp(\pi)$ in Proposition \ref{prop3.1}: for any integer $N$, $\phi_\ncp(\pi)(N)$ counts the number of $N$-valid colorations of $\pi$, 
that is to say maps $f$ from $\pi$ to $\{1,\ldots,N\}$ such that the two following conditions are satisfied:
\begin{itemize}
\item If $b,b'$ are two blocks of $\pi$ such that $b'$ is nested into $b$, then $f(b)<f(b')$.
\item If $b$ and $b''$ are two different blocks of $\pi$ such that $f(b)=f(b'')$ and $\max(b)<\min(b'')$, then there exists a block $b'$ of $\pi$ intersecting $]\max(b),\min(b'')[$  such that $f(b')<f(b)=f(b'')$. 
\end{itemize}
The second condition will be called the $b$-condition. We give in Proposition \ref{prop3.2} an inductive way to compute this chromatic polynomial $\phi_\ncp(\pi)$. We obtain, for example,
\begin{align*}
\phi_\ncp(\nctroisun)&=X,&\phi_\ncp(\nctroisdeux)&=X(X-1),\\
\phi_\ncp(\nctroistrois)&=X(X-1),&\phi_\ncp(\nctroisquatre)&=\dfrac{X(X-1)}{2},\\
\phi_\ncp(\nctroiscinq)&=X(X-1)\left(X-\dfrac{3}{2}\right).
\end{align*}
To this polynomial invariant is attached a character $\mu_\ncp$, defined by
\[\mu_\ncp(x)=\phi_\ncp(x)(-1),\]
for any $x\in \alg$. The antipode of $(\alg,m,\Delta)$ is then given by 
\[S=(\mu_\ncp\otimes \id)\circ \delta.\]
We show that this character can be explicitly computed (Proposition \ref{prop3.5}), and that it sends any noncrossing partition to a  signed product of Catalan numbers. 
As a consequence, we deduce some formulas for the coefficients of the inverse (for the composition) of a formal series (Proposition \ref{prop3.6}).

We then turn to the computation $\phi_\ncp(\pi)$, when $\pi$ has no nesting. In this case, $\phi_\ncp(\pi)$ depends only on the number $n$ of blocks of $\pi$.
We decompose these polynomials, first in the basis of Hilbert polynomials $\left(\dfrac{X(X-1)\ldots (X-n+1)}{n!}\right)_{n\geq 0}$, then in the basis of monomials $(X^n)_{n\geq 0}$. 
In the first case, their coefficients can be  inductively computed (Proposition \ref{prop3.8}). Closed formulas for certain of these coefficients are given in Corollary \ref{cor3.10}, with the help of multiple nested harmonic sums. 
In the second case, Proposition \ref{prop3.11} shows that these coefficients are related to the exponentiation of a particular infinite matrix, namely the Riordan array of $(1+X,X(1+X))$. 
We also give a combinatorial interpretation of the coefficient of $X^n$ and $X^{n-1}$ in $\phi_\ncp(\pi)$ when $\pi$ has $n$ blocks in Corollaries \ref{cor3.17} and \ref{cor3.19}, 
using a particular invertible character $\lambda_\ncp$ of $(\alg,m,\delta)$. 
This character is also related to the infinitesimal generator of $X(1+X)$, see Proposition \ref{prop3.20}, and to generalized Stirling numbers, see Example \ref{ex3.5}. 

We introduce two other bialgebra morphisms $\Lambda$ and $\Lambda^s$ from $(\alg,m,\Delta)$ to $(\K[X],m,\Delta)$, counting colorations where the $b$-condition is abandoned and, in the $\Lambda$ case, 
the increasing condition is weakened (Theorem \ref{theo3.23}). For example,
\begin{align*}
\Lambda(\nctroisun)&=X,&\Lambda^s(\nctroisun)&=X,\\
\Lambda(\nctroisdeux)&=X^2,&\Lambda^s(\nctroisdeux)&=X^2,\\
\Lambda(\nctroistrois)&=X^2,&\Lambda^s(\nctroistrois)&=X^2,\\
\Lambda(\nctroisquatre)&=\dfrac{X(X+1)}{2},&\Lambda^s(\nctroisquatre)&=\dfrac{X(X-1)}{2},\\
\Lambda(\nctroiscinq)&=X^3,&\Lambda^s(\nctroiscinq)&=X^3.
\end{align*}
These two morphisms are related by a duality principle: for any noncrossing partition $\pi$,
\[\Lambda(\pi)(X)=(-1)^{|\pi|}\Lambda^s(\pi)(-X).\]
We also show in Corollary \ref{cor3.24} that they are related to $\phi_\ncp$ trough two simple characters $\lambda$ and $\lambda^s$, such that
\begin{align*}
\Lambda&=(\phi_\ncp \otimes \lambda)\circ \delta,&\Lambda^s&=(\phi_\ncp \otimes \lambda^s)\circ \delta.
\end{align*}
The inverses $\mu$ and $\mu^s$ for the convolution associated to $\delta$ of these characters are also studied. Note that 
\begin{align*}
\phi_\ncp&=(\Lambda \otimes \mu)\circ \delta=(\Lambda^s\otimes \mu^s)\circ \delta.
\end{align*}
We give a simple formula for $\mu^s$ in Proposition \ref{prop3.27} and a more complicated way to compute $\mu$ in Proposition \ref{propr3.32}.

Finally, in the last section of this paper, we prove that there is no double bialgebra morphism from $\alg$ to the double bialgebras of hypergraphs \cite{Foissy44} or mixed graphs \cite{Foissy45}, 
sending a noncrossing partition to a sum of hypergraphs or to a mixed graph, showing in this way the specificity of the combinatorics of noncrossing partitions, see Proposition \ref{prop4.1}. \\

\textbf{Acknowledgments}. The author acknowledges support from the grant ANR-20-CE40-0007 \emph{Combinatoire Algébrique, Résurgence, Probabilités Libres et Opérades}. The author is very grateful to Thomas Copeland for pointing the link between the antipode and the coefficients appearing in the inversion of a formal series, which lead to a second version of this text with additions on the Faà di Bruno Hopf algebra. \\ 

\begin{notation} \begin{enumerate}
\item We denote by $\K$ a commutative field of characteristic zero. Any vector space in this text will be taken over $\K$.
\item For any $n\in \N$, we denote by $[n]$ the set $\{1,\ldots,n\}$. In particular, $[0]=\emptyset$.
\item If $(C,\Delta)$ is a (coassociative but not necessarily counitary) coalgebra, we denote by $\Delta^{(n)}$ the $n$-th iterated coproduct of $C$:
$\Delta^{(0)}=\id_C$, $\Delta^{(1)}=\Delta$ and if $n\geq 2$,
\[\Delta^{(n)}=\left(\Delta \otimes \id_C^{\otimes (n-1)}\right)\circ \Delta^{(n-1)}:C\longrightarrow C^{\otimes (n+1)}.\]
\item If $(B,m,\Delta)$ is a bialgebra of unit $1_B$ and of counit $\varepsilon_B$, let us denote by $B_+=\ker(\varepsilon_B)$ its augmentation ideal. We define a coproduct on $B_+$ by
\begin{align*}
&\forall x\in B_+,&\tdelta(x)=\Delta(x)-x\otimes 1_B-1_B\otimes x.
\end{align*}
Then $(B_+,\tdelta)$ is a coassociative (generally not counitary) coalgebra. 
In consequence, we shall be able to consider the iterated reduced coproducts $\tdelta^{(n-1)}$, with $n\geq 1$.
\item For any $n\geq 1$, we denote by $H_n(X)$ the $n$-th Hilbert polynomial:
\[H_n(X)=\frac{X(X-1)\ldots (X-n+1)}{n!} \in \K[X].\]
By convention, $H_0=1$. 
\end{enumerate}\end{notation}

\section{Reminders}

\subsection{On species}

We denote by $\set$ the category of finite sets with bijections and by $\Vect$ the category of vector spaces.
A (linear) species is a functor $\bfP$ from $\set$ to $\Vect$. 
Let $\bfP$ be a species. We fix the notations:
\begin{itemize}
\item For any finite set $X$, the vector space associated to $X$ by $\bfP$ is denoted by $\bfP[X]$.
\item For any bijection $\sigma:X\longrightarrow Y$  between two finite sets, the linear map associated to $\sigma$ by $\bfP$ is denoted by $\bfP[\sigma]:\bfP[X]\longrightarrow \bfP[Y]$. 
\end{itemize}
Note that for any finite set $X$, $\bfP[\id_X]=\id_{\bfP[X]}$ and that for any bijections $\sigma:X\longrightarrow Y$
and $\tau:Y\longrightarrow Z$ between finite sets, $\bfP(\tau\circ \sigma)=\bfP(\tau)\circ \bfP(\sigma)$.\\

Let $\bfP$ and $\bfQ$ be two species. A morphism between $\bfP$ and $\bfQ$ is a natural transformation between the two functors $\bfP$ and $\bfQ$, that is to say, 
for any finite set $X$, a linear map $f_X:\bfP[X]\longrightarrow \bfP[Y]$ such that for any bijection $\sigma:X\longrightarrow Y$ between two finite sets, the following diagram commutes:
\[\xymatrix{\bfP[X]\ar[r]^{\bfP[\sigma]}\ar[d]_{f_X}&\bfP[Y]\ar[d]^{f_Y}\\
\bfQ[X]\ar[r]_{\bfQ[\sigma]}&\bfQ[Y]}\]

Species form a symmetric monoidal category, with the Cauchy tensor product $\otimes$: if $\bfP$ and $\bfQ$ are two species, for any finite set $X$,
\[\bfP\otimes \bfQ[X]=\bigoplus_{X=Y\sqcup Z} \bfP[Y]\otimes \bfQ[Z],\]
and if $\sigma:X\longrightarrow Y$  is a bijection between two finite sets, then 
\[\bfP\otimes \bfQ[\sigma]=\bigoplus_{X=Y\sqcup Z} \bfP[\sigma_{\mid Y}]\otimes \bfQ[\sigma_{\mid Z}].\]
For any species $\bfP$ and $\bfQ$, the flip $c_{\bfP,\bfQ}:\bfP\otimes \bfQ\longrightarrow \bfQ\otimes \bfP$ 
is defined by the following: for any pair $(X,Y)$ of disjoint sets,
\begin{align*}
c_{\bfP,\bfQ}&:\left\{\begin{array}{rcl}
\bfP[X]\otimes \bfQ[Y]&\longrightarrow&\bfQ[Y]\otimes \bfP[X]\\
x\otimes y&\longmapsto&y\otimes x.
\end{array}\right.\end{align*}

A twisted algebra (resp. coalgebra, bialgebra) is an algebra (resp. coalgebra, bialgebra) in the symmetric monoidal category of species with the Cauchy tensor product. Let us now give more details.

 A twisted algebra is a pair $(\bfP,m)$ where $\bfP$ is a species and, for any pair $(X,Y)$ of disjoint finite sets,
$m_{X,Y}:\bfP[X]\otimes \bfP[Y]\longrightarrow \bfP[X\sqcup Y]$ is a linear map, with the following properties:
\begin{itemize}
\item If $(X,Y)$ and $(X',Y')$ are two pairs of disjoint finite sets, and if  $\sigma:X\longrightarrow X'$ and $\tau:Y\longrightarrow Y'$ are bijections, then the following diagram commutes:
\[\xymatrix{\bfP[X]\otimes \bfP[Y]\ar[r]^{m_{X,Y}}\ar[d]_{\bfP[\sigma]\otimes \bfP[\tau]}&\bfP[X\sqcup Y]\ar[d]^{\bfP[\sigma \sqcup \tau]}\\
\bfP[X']\otimes \bfP[Y']\ar[r]_(.55){m_{X',Y'}}&\bfP[X'\sqcup Y']}\]
\item The product $m$ is associative: for any triple $(X,Y,Z)$ of pairwise disjoint sets,
\[m_{X\sqcup Y,Z}\circ (m_{X,Y}\otimes \id_{\bfP[Z]})=m_{X,Y\sqcup Z}\circ (\id_{\bfP[X]}\otimes m_{Y,Z}).\]
\item The product $m$ has a unit $1_\bfP\in \bfP[\emptyset]$: for any finite set $X$, for any $x\in \bfP[X]$,
\[m_{\emptyset,X}(1_\bfP\otimes x)=m_{X,\emptyset}(x\otimes 1_\bfP)=x.\]
\end{itemize}

A twisted coalgebra is a pair $(P,\Delta)$ where $\bfP$ is a species and for any pair $(X,Y)$ of finite sets, 
$\Delta_{X,Y}:\bfP[X\sqcup Y]\longrightarrow \bfP[X]\otimes \bfP[Y]$ is a linear map, with the following properties:
\begin{itemize}
\item  If $(X,Y)$ and $(X',Y')$ are two pairs of disjoint finite sets, and if $\sigma:X\longrightarrow X'$ and $\tau:Y\longrightarrow Y'$ are bijections, then the following diagram commutes:
\[\xymatrix{\bfP[X\sqcup Y]\ar[r]^{\Delta_{X,Y}}\ar[d]_{\bfP[\sigma\sqcup \tau]}&\bfP[X]\otimes \bfP[Y]\ar[d]^{\bfP[\sigma]\otimes \bfP[\tau]}\\
\bfP[X'\sqcup Y']\ar[r]_{\Delta_{X',Y'}}&\bfP[X']\otimes \bfP[Y']}\]
\item The coproduct $\Delta$ is coassociative: for any triple $(X,Y,Z)$ of pairwise disjoint finite sets,
\[(\Delta_{X,Y}\otimes \id_{\bfP[Z]})\circ \Delta_{X\sqcup Y,Z}=(\id_{\bfP[Z]}\otimes \Delta_{Y,Z})\circ \Delta_{X,Y\sqcup Z}.\]
\item The coproduct $\Delta$ has a counit $\varepsilon_\Delta \in \bfP[\emptyset]^*$: for any finite set $X$,
\[(\varepsilon_\Delta \otimes \id_{\bfP[X]})\circ \Delta_{\emptyset,X}
=(\id_{\bfP[X]}\otimes \varepsilon_\Delta)\circ \Delta_{X,\emptyset}=\id_{\bfP[X]}.\]
\end{itemize}

A twisted bialgebra is a triple $(\bfP,m,\Delta)$ such that:
\begin{itemize}
\item $(\bfP,m)$ is a twisted algebra. Its unit is denoted by $1_\bfP$.
\item $(\bfP,\Delta)$ is a twisted coalgebra. Its counit is denoted by $\varepsilon_\Delta$.
\item For any pairs $(X_1,X_2)$ and $(Y_1,Y_2)$ of disjoint finite sets, such that $X_1\sqcup X_2=Y_1\sqcup Y_2$,
\begin{align*}
\Delta_{Y_1,Y_2}\circ m_{X_1,X_2}&=(m_{X_1\cap Y_1,X_2\cap Y_1}\otimes m_{X_1\cap Y_2,X_2\cap Y_2})\\
&\circ (\id_{\bfP[X_1\cap Y_1]}\otimes c_{\bfP[X_1\cap Y_2,X_2\cap Y_1]}\otimes \id_{\bfP[X_2\cap Y_2]})\\
&\circ(\Delta_{X_1\cap Y_1,X_1\cap Y_2}\otimes \Delta_{X_2\cap Y_1,X_2\cap Y_2}).
\end{align*}
\item For any $x,y\in \bfP[\emptyset]$, $\varepsilon_\Delta(xy)=\varepsilon_\Delta(x)\varepsilon_\Delta(y)$.
\item $\Delta_{\emptyset,\emptyset}(1_\bfP)=1_\bfP\otimes 1_\bfP$ and $\varepsilon_\Delta(1_\bfP)=1$.\\
\end{itemize}

 The bosonic Fock functor \cite{Aguiar2010}  sends any species $\bfP$ to
\[\calF[\bfP]=\bigoplus_{n=0}^\infty \coinv(\bfP[n])=\bigoplus_{n=0}^\infty \frac{\bfP[n]}{\vect(\bfP[\sigma](p)-p\mid \sigma\in \sym_n,\: p\in \bfP[n])}.\]
This is a functor of symmetric monoidal categories from species to (graded) vector spaces.
Therefore, if $\bfP$ is a twisted algebra (resp. coalgebra, bialgebra), then $\calF[\bfP]$ is a (graded) algebra
(resp. coalgebra, bialgebra).

\subsection{On double bialgebras}

We refer to \cite{Foissy37,Foissy36,Foissy40} for the details.

\begin{defi}
A double bialgebra is a family $(B,m,\Delta,\delta)$ such that:
\begin{enumerate}
\item $(B,m,\Delta)$ and $(B,m,\delta)$ are bialgebras. Their common unit is denoted by $1_B$.
The counits of $\Delta$ and $\delta$ are respectively denoted by $\varepsilon_\Delta$ and $\epsilon_\delta$.
\item $(B,m,\Delta)$ is a bialgebra in the category of right comodules over $(B,m,\delta)$, with the coaction $\delta$, seen as a coaction over itself. This is equivalent to the two following assertions:
\begin{align*}
&\forall x\in B,&(\varepsilon_\Delta \otimes \id_B)\circ \delta(x)&=\varepsilon_\Delta(x)1_B,\\
&&(\Delta \otimes \id_B)\circ \delta&=m_{1,3,24}\circ (\delta\otimes \delta)\circ \Delta,
\end{align*}  
where $m_{1,3,24}:B^{\otimes 4}\longrightarrow B^{\otimes 3}$ sends $x_1\otimes x_2\otimes x_3\otimes x_4$ to $x_1\otimes x_3\otimes x_2x_4$. 
Note that $m_{1,3,24}\circ (\delta \otimes \delta)$ is in fact the coaction of $B\otimes B$. 
\end{enumerate}\end{defi}

An example of double bialgebra is given by the usual polynomial algebra $\K[X]$, with its usual product $m$ and the two (multiplicative) coproducts defined by
\begin{align*}
\Delta(X)&=X\otimes 1+1\otimes X,&\delta(X)&=X\otimes X.
\end{align*}
The counits are given by
\begin{align*}
\varepsilon_\Delta:&\left\{\begin{array}{rcl}
\K[X]&\longrightarrow&\K\\
P(X)&\longmapsto&P(0),
\end{array}\right.&
\epsilon_\delta:&\left\{\begin{array}{rcl}
\K[X]&\longrightarrow&\K\\
P(X)&\longmapsto&P(1).
\end{array}\right.\end{align*}

\begin{prop}\label{prop1.2}
Let $(B,m,\Delta,\delta)$ be a double bialgebra. 
\begin{enumerate}
\item We denote by $\Char(B)$ the set of characters of $B$, that is to say the set of algebra morphisms from $B$ to $\K$. This sets inherits two associative and unitary products defined by
\begin{align*}
&\forall \lambda,\mu\in \Char(B),&\lambda*\mu&=(\lambda \otimes \mu)\circ \Delta,&\lambda\star \mu&=(\lambda \otimes \mu)\circ \delta.
\end{align*}
The units of the products $*$ and $\star$ are respectively $\varepsilon_\Delta$ and $\epsilon_\delta$.
\item Let $(A,m,\Delta)$ be a bialgebra. We denote by $M_{B\rightarrow A}$ the set of bialgebra morphisms from $(B,m,\Delta)$ to $(A,m,\Delta)$. 
Then the monoid $(\Char(B),\star)$ acts on $M_{B\rightarrow A}$ via the following right action:
\begin{align*}
\leftsquigarrow&:\left\{\begin{array}{rcl}
M_{B\rightarrow A}\times \Char(B)&\longrightarrow&M_{B\rightarrow A}\\
(\phi,\lambda)&\longmapsto&\phi\leftsquigarrow\lambda=(\phi\otimes \lambda)\circ \delta.
\end{array}\right.
\end{align*}\end{enumerate}\end{prop}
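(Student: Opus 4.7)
The plan is to address parts (1) and (2) in turn by unwinding the defining diagrams and invoking the bialgebra and cointeraction axioms; most of the verifications reduce to routine coassociativity or counit bookkeeping, the only substantive point being the coalgebra-morphism property in part (2), where the cointeraction identity enters essentially.

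For part (1), I would first check that $\lambda * \mu$ and $\lambda \star \mu$ are again characters. Since $(B,m,\Delta)$ and $(B,m,\delta)$ are bialgebras, both $\Delta$ and $\delta$ are algebra morphisms, so
\[
(\lambda \otimes \mu)\circ \Delta(xy) \;=\; (\lambda \otimes \mu)\bigl(\Delta(x)\Delta(y)\bigr) \;=\; (\lambda * \mu)(x)\,(\lambda * \mu)(y),
\]
and analogously for $\star$. Associativity of $*$ and $\star$ then follows from coassociativity of $\Delta$ and $\delta$ combined with associativity of the product in $\K$, and the unit identities $\lambda * \varepsilon_\Delta = \varepsilon_\Delta * \lambda = \lambda$ and $\lambda \star \epsilon_\delta = \epsilon_\delta \star \lambda = \lambda$ are the counit axioms evaluated against characters.

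For part (2), fix $\phi \in M_{B\to A}$ and $\lambda \in \Char(B)$, and set $\phi \leftsquigarrow \lambda := (\phi \otimes \lambda) \circ \delta$. I first need to show that this is again in $M_{B\to A}$. Preservation of the product is immediate from the fact that $\delta$, $\phi$, and $\lambda$ are all multiplicative. For the counit, $\varepsilon_A \circ (\phi \otimes \lambda) = \varepsilon_\Delta \otimes \lambda$ on $B \otimes B$, hence
\[
\varepsilon_A \circ (\phi \leftsquigarrow \lambda) \;=\; (\varepsilon_\Delta \otimes \lambda) \circ \delta,
\]
and the first cointeraction axiom $(\varepsilon_\Delta \otimes \id_B)\circ \delta(x) = \varepsilon_\Delta(x)\,1_B$, together with $\lambda(1_B) = 1$, collapses this to $\varepsilon_\Delta$. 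The monoid-action identities then come easily: $\phi \leftsquigarrow \epsilon_\delta = \phi$ from the counit of $\delta$, and $(\phi \leftsquigarrow \lambda) \leftsquigarrow \mu = \phi \leftsquigarrow (\lambda \star \mu)$ from coassociativity of $\delta$.

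The main obstacle is showing that $\phi \leftsquigarrow \lambda$ commutes with $\Delta$. Using that $\phi$ is itself a coalgebra morphism for $\Delta$, I would first rewrite
\[
\Delta_A \circ (\phi \otimes \lambda) \circ \delta \;=\; (\phi \otimes \phi \otimes \lambda) \circ (\Delta \otimes \id_B) \circ \delta,
\]
and then apply the cointeraction identity $(\Delta \otimes \id) \circ \delta = m_{1,3,24} \circ (\delta \otimes \delta) \circ \Delta$ to rewrite the right-hand side as
\[
(\phi \otimes \phi \otimes \lambda) \circ m_{1,3,24} \circ (\delta \otimes \delta) \circ \Delta.
\]
Applied to $a_1 \otimes a_2 \otimes a_3 \otimes a_4$, the composite $(\phi \otimes \phi \otimes \lambda) \circ m_{1,3,24}$ returns $\phi(a_1) \otimes \phi(a_3) \otimes \lambda(a_2 a_4)$, which, since $\lambda$ is a character, equals $\phi(a_1)\lambda(a_2) \otimes \phi(a_3)\lambda(a_4)$ after absorbing scalars. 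This is precisely $(\phi \leftsquigarrow \lambda) \otimes (\phi \leftsquigarrow \lambda)$ applied to the grouping $(a_1 \otimes a_2) \otimes (a_3 \otimes a_4)$, so the whole right-hand side collapses to $\bigl((\phi \leftsquigarrow \lambda) \otimes (\phi \leftsquigarrow \lambda)\bigr) \circ \Delta$, as required. This step is where the full strength of the cointeraction axiom is used; with it in hand both parts of the proposition are established.
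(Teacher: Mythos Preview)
Your proof is correct. The paper itself does not supply a proof of this proposition: it appears in the ``Reminders'' subsection, where the author simply refers to \cite{Foissy37,Foissy36,Foissy40} for details. Your argument is the standard one and uses the axioms exactly where expected; in particular, your handling of the coalgebra-morphism property via the cointeraction identity $(\Delta \otimes \id)\circ \delta = m_{1,3,24}\circ(\delta\otimes\delta)\circ\Delta$ is the intended route, and the scalar-absorption step after unpacking $m_{1,3,24}$ is carried out correctly.
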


The double structure allows to find the antipode for the first structure, whenever it exists:

\begin{theo}\label{theo1.3}  \cite[Corollary 2.3]{Foissy40}
Let $(B,m,\Delta,\delta)$ be a double bialgebra. 
\begin{enumerate}
\item Then $(B,m,\Delta)$ is a Hopf algebra if, and only if, the character $\epsilon_\delta$ has an inverse $\mu_B$ for the convolution product $*$ dual to $\Delta$. 
Moreover, if this holds, the antipode of $(B,m,\Delta)$ is given by
\[S=(\mu_B \otimes \id_B)\circ \delta.\]
\item Let $\phi_B:B\longrightarrow \K[X]$ be a double bialgebra morphism. Then $\epsilon_\delta$ has an inverse for the convolution product $*$, given by
\begin{align*}
\mu_B&: \left\{\begin{array}{rcl}
B&\longrightarrow&\K\\
x&\longmapsto&\phi_B(x)(-1).
\end{array}\right.
\end{align*}\end{enumerate}\end{theo}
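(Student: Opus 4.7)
The plan is to treat the two directions of part~(1) separately. The implication $(\Rightarrow)$ is standard: if $(B,m,\Delta)$ is a Hopf algebra with antipode $S$, then for any character $\chi\in\Char(B)$ the composition $\chi\circ S$ is the $*$-inverse of $\chi$, since $\chi*(\chi\circ S)=\chi\circ(\id_B*S)=\chi\circ\bigl(\varepsilon_\Delta(\cdot)\,1_B\bigr)=\varepsilon_\Delta$, and symmetrically. Applied to $\chi=\epsilon_\delta$, this furnishes the required $\mu_B$.

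For the converse, set $S:=(\mu_B\otimes\id_B)\circ\delta$; the goal is to verify $S*\id_B=\id_B*S=\varepsilon_\Delta(\cdot)\,1_B$. The key step, which I expect to be the main obstacle, is to apply $\mu_B\otimes\epsilon_\delta\otimes\id_B$ (and symmetrically $\epsilon_\delta\otimes\mu_B\otimes\id_B$) to both sides of the cointeraction axiom
\[(\Delta\otimes\id_B)\circ\delta=m_{1,3,24}\circ(\delta\otimes\delta)\circ\Delta.\]
On the left-hand side, the resulting composition factors as $\bigl((\mu_B*\epsilon_\delta)\otimes\id_B\bigr)\circ\delta$, which by the hypothesis $\mu_B*\epsilon_\delta=\varepsilon_\Delta$ and the comodule axiom $(\varepsilon_\Delta\otimes\id_B)\circ\delta(x)=\varepsilon_\Delta(x)\,1_B$ collapses to $\varepsilon_\Delta(x)\,1_B$. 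On the right-hand side, writing $\Delta(x)=\sum x_1\otimes x_2$, the $\mu_B$-component acts on the $\delta$-factor of $x_1$ to produce $S(x_1)$, while the counit identity $(\epsilon_\delta\otimes\id_B)\circ\delta=\id_B$ collapses the $\delta$-factor of $x_2$ back to $x_2$, leaving $\sum S(x_1)\,x_2=(S*\id_B)(x)$. This yields $S*\id_B=\varepsilon_\Delta(\cdot)\,1_B$, and the other antipode identity follows symmetrically.

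For part~(2), the argument reduces to a computation in $\K[X]$. The evaluations $\mathrm{ev}_a:P\mapsto P(a)$ are characters of $\K[X]$, and the identity $\Delta_{\K[X]}(P)(X,Y)=P(X+Y)$ gives $\mathrm{ev}_a*\mathrm{ev}_b=\mathrm{ev}_{a+b}$; in particular $\mathrm{ev}_{-1}*\mathrm{ev}_1=\mathrm{ev}_0=\varepsilon_\Delta^{\K[X]}$. Because $\phi_B$ is a bialgebra morphism for $\Delta$, the pullback $\chi\mapsto\chi\circ\phi_B$ is a morphism of monoids $(\Char(\K[X]),*)\to(\Char(B),*)$; since $\phi_B$ is a double bialgebra morphism, it respects both counits, so $\mathrm{ev}_1\circ\phi_B=\epsilon_\delta$ and $\mathrm{ev}_0\circ\phi_B=\varepsilon_\Delta$, while by definition $\mathrm{ev}_{-1}\circ\phi_B=\mu_B$. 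Pulling back the identity $\mathrm{ev}_{-1}*\mathrm{ev}_1=\mathrm{ev}_0$ therefore yields $\mu_B*\epsilon_\delta=\varepsilon_\Delta$ in $\Char(B)$, completing the proof.
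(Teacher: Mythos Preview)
The paper does not prove Theorem~\ref{theo1.3}: it is quoted from \cite[Corollary~2.3]{Foissy40} as a reminder, with no argument given here. So there is no in-paper proof to compare against.

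That said, your proposal is correct and is essentially the standard argument one finds in the cited reference. The forward implication is immediate as you say. For the converse, your use of the cointeraction axiom $(\Delta\otimes\id_B)\circ\delta=m_{1,3,24}\circ(\delta\otimes\delta)\circ\Delta$ together with the comodule compatibility $(\varepsilon_\Delta\otimes\id_B)\circ\delta(x)=\varepsilon_\Delta(x)1_B$ and the counit property of $\epsilon_\delta$ is exactly what is needed; applying $\mu_B\otimes\epsilon_\delta\otimes\id_B$ (resp.\ $\epsilon_\delta\otimes\mu_B\otimes\id_B$) to both sides does collapse the left-hand side to $\varepsilon_\Delta(\cdot)1_B$ and the right-hand side to $S*\id_B$ (resp.\ $\id_B*S$), as you outline. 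Note that nothing in this computation requires $\mu_B$ to be a character: any linear functional inverting $\epsilon_\delta$ in $(B^*,*)$ suffices. Your treatment of part~(2) via pullback of the identity $\mathrm{ev}_{-1}*\mathrm{ev}_1=\mathrm{ev}_0$ along the bialgebra morphism $\phi_B$ is clean and correct.
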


A double bialgebra $(B,m,\Delta,\delta)$ is connected if the reduced coproduct $\tdelta$ is locally nilpotent:
in other words, for any $x\in \ker(\varepsilon_\Delta)$, there exists $n\geq 1$ such that $\tdelta^{(n)}(x)=0$. If so, we obtain more results:

\begin{theo}\label{theo1.4}\cite[Theorem 3.9, Corollary 3.12]{Foissy40}
Let $(B,m,\Delta,\delta)$ be a connected double bialgebra.
\begin{enumerate}
\item There exists a unique double bialgebra morphism $\phi_B$ from $B$ to $\K[X]$. Moreover,
\begin{align*}
&\forall x\in \ker(\varepsilon_B),&\phi_B(x)&=\sum_{k=1}^\infty \epsilon_\delta^{\otimes k}\circ \tdelta^{(k-1)}(x) H_k(X).
\end{align*}
\item The two following maps are bijective, inverse one from the other:
\begin{align*}
&\left\{\begin{array}{rcl}
\Char(B)&\longrightarrow&M_{B\rightarrow \K[X]}\\
\lambda&\longmapsto&\phi_B\leftsquigarrow \lambda,
\end{array}\right.
&&\left\{\begin{array}{rcl}
M_{B\rightarrow \K[X]}&\longrightarrow&\Char(B)\\
\phi&\longmapsto&\left\{\begin{array}{rcl}
B&\longrightarrow&\K\\
x&\longmapsto&\phi(x)(1).
\end{array}\right.
\end{array}\right.
\end{align*}
\item For any $\lambda \in \Char(B)$, for any $x\in \ker(\varepsilon_\Delta)$, 
\[\phi_B\leftsquigarrow\lambda(x)=\sum_{k=1}^\infty \lambda^{\otimes k}\circ \tdelta^{(k-1)}(x) H_k(X).\]
\end{enumerate}\end{theo}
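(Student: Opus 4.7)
The plan is to build, for each character $\lambda\in\Char(B)$, a bialgebra morphism $\phi_\lambda\colon (B,m,\Delta)\to(\K[X],m,\Delta)$, to show that $\lambda\mapsto\phi_\lambda$ is bijective, and finally to identify $\phi_B$ as $\phi_{\epsilon_\delta}$ and verify it respects the second structure. Define $\phi_\lambda(1_B)=1$ and, for $x\in\ker(\varepsilon_\Delta)$,
\[\phi_\lambda(x)=\sum_{k\geq 1}\lambda^{\otimes k}\circ\tdelta^{(k-1)}(x)\,H_k(X),\]
the sum being finite by $\Delta$-connectedness. I would first show that $\phi_\lambda\in M_{B\to\K[X]}$: the $\Delta$-coalgebra morphism property rests on coassociativity of $\tdelta$ together with the classical identity $H_k(X+Y)=\sum_{i+j=k}H_i(X)H_j(Y)$, while multiplicativity $\phi_\lambda(xy)=\phi_\lambda(x)\phi_\lambda(y)$ follows from $\lambda$ being a character combined with the bialgebra compatibility of $m$ and $\Delta$ in $B$ (one checks it by hand on primitive generators, or abstractly via the convolution identity $\lambda^{\otimes k}\circ\tdelta^{(k-1)}=\lambda^{*k}$ restricted to $\ker(\varepsilon_\Delta)$).

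Next, for part (2), I would show that $\Psi\colon\lambda\mapsto\phi_\lambda$ is bijective with inverse $\phi\mapsto(x\mapsto\phi(x)(1))$. Since $H_k(1)=\delta_{k,1}$ for $k\geq 1$, evaluation at $X=1$ yields $\phi_\lambda(x)(1)=\lambda(x)$ on $\ker(\varepsilon_\Delta)$, so $\lambda_{\phi_\lambda}=\lambda$. Conversely, given $\phi\in M_{B\to\K[X]}$, set $\lambda_\phi(x):=\phi(x)(1)$, which is a character of $B$; iterating the $\Delta$-coalgebra morphism property of $\phi$ and expanding $P(X_1+\cdots+X_n)$ in the basis $(H_{k_1}(X_1)\cdots H_{k_n}(X_n))$ forces $\phi=\phi_{\lambda_\phi}$. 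For part (3), once $\phi_B=\phi_{\epsilon_\delta}$ is in hand, $\phi_B\leftsquigarrow\lambda$ lies in $M_{B\to\K[X]}$ by Proposition~\ref{prop1.2}; its evaluation at $X=1$ is $(\epsilon_\delta\otimes\lambda)\circ\delta=\lambda$, using $(\epsilon_\delta\otimes\id_B)\circ\delta=\id_B$ and the identity $\phi_B(x)(1)=\epsilon_\delta(x)$ obtained from the bijection. Hence $\phi_B\leftsquigarrow\lambda=\phi_\lambda$ by the bijection, which is exactly the formula in~(3).

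For part (1), set $\phi_B:=\phi_{\epsilon_\delta}$, so the displayed formula is immediate. Uniqueness is direct: any double bialgebra morphism $\phi$ preserves the counit of $\delta$, which on $\K[X]$ is evaluation at~$1$, so $\lambda_\phi=\epsilon_\delta$ and $\phi=\phi_{\epsilon_\delta}$ by the bijection. The main obstacle is to verify that $\phi_B$ actually respects the second coproduct, i.e.\ $(\phi_B\otimes\phi_B)\circ\delta=\delta_{\K[X]}\circ\phi_B$. Because the characters of $\K[X]$ are the evaluations $\mathrm{ev}_N$ and $\mathrm{ev}_N\star\mathrm{ev}_M=\mathrm{ev}_{NM}$, this reduces to proving that $\mathrm{ev}_{NM}\circ\phi_B=(\mathrm{ev}_N\circ\phi_B)\star(\mathrm{ev}_M\circ\phi_B)$ as characters of $B$ for all $N,M$. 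I would derive this by iterating the cointeraction identity
\[(\Delta\otimes\id_B)\circ\delta=m_{1,3,24}\circ(\delta\otimes\delta)\circ\Delta\]
into an analogue for $\tdelta^{(k-1)}$, and then applying $\epsilon_\delta^{\otimes k}$ on the appropriate tensor factors using $(\epsilon_\delta\otimes\id_B)\circ\delta=\id_B$; this collapses the cointeraction identity into an equality between the Hilbert-basis coefficients of $\delta_{\K[X]}\circ\phi_B(x)$ and those of $(\phi_B\otimes\phi_B)\circ\delta(x)$, term by term.
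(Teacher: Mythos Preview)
The paper does not prove Theorem~\ref{theo1.4}: it is stated in the ``Reminders'' section with a citation to \cite[Theorem 3.9, Corollary 3.12]{Foissy40} and no argument is given. So there is no proof in the paper to compare against; I can only assess your proposal on its own.

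Your overall strategy is sound and is essentially the standard one. The key unifying identity you are circling around is
\[
\phi_\lambda(x)(N)=\lambda^{*N}(x)\qquad (x\in\ker(\varepsilon_\Delta),\ N\in\N),
\]
obtained by expanding $\Delta^{(N-1)}(x)$ in terms of $\tdelta$ and using $H_k(N)=\binom{N}{k}$. From this, multiplicativity and the $\Delta$-coalgebra property are immediate (since $\lambda^{*N}$ is a character and $\lambda^{*(M+N)}=\lambda^{*M}*\lambda^{*N}$), and so is $\phi_\lambda(x)(1)=\lambda(x)$. Be careful, however: the parenthetical identity you wrote, ``$\lambda^{\otimes k}\circ\tdelta^{(k-1)}=\lambda^{*k}$ on $\ker(\varepsilon_\Delta)$'', is \emph{false} as stated (the two differ by a binomial transform); replace it by the displayed identity above and your argument for multiplicativity goes through cleanly.

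For the $\delta$-compatibility of $\phi_B$, your reduction to $\mathrm{ev}_{NM}\circ\phi_B=(\mathrm{ev}_N\circ\phi_B)\star(\mathrm{ev}_M\circ\phi_B)$ is correct; the cleanest way to finish is to apply $\alpha\otimes\beta\otimes\gamma$ to the cointeraction axiom to get $(\alpha*\beta)\star\gamma=(\alpha\star\gamma)*(\beta\star\gamma)$, then iterate to obtain $\epsilon_\delta^{*N}\star\gamma=\gamma^{*N}$ and hence $\epsilon_\delta^{*N}\star\epsilon_\delta^{*M}=\epsilon_\delta^{*NM}$. Your last paragraph gestures at this but would benefit from stating this character identity explicitly rather than talking about ``Hilbert-basis coefficients''.
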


\section{Bialgebraic structures on noncrossing partitions}

\subsection{Reminders on noncrossing partitions}

\begin{defi}\label{defi2.1}
A noncrossing partition is a partition $\pi$ of a set $[n]$ with $n\in \N$, such that 
\begin{align*}
&\forall b\neq b'\in \pi,\: \forall x,z\in b,\: \forall y,t\in b',&x<y<z<t\mbox{ does not hold}.
\end{align*}
The elements of $\pi$ are called its blocks. The number of blocks of a noncrossing partition $\pi$ is denoted by $|\pi|$. The set of noncrossing partitions with $k$ blocks is denoted by $\ncp_k$. 
\end{defi}

We shall represent noncrossing partitions by diagrams. Elements of $[n]$ will be represented by vertical segments, from left to right, related by horizontal segments corresponding to the blocks. 
For example, the noncrossing partition $\{\{1,3\},\{2\},\{4\}\}$ is represented by $\ncquatreonze$.\\

Note that for any $k\geqslant 1$, $\ncp_k$ is an infinite set. For example,
\begin{align*}
\ncp_1&=\{\ncun;\:\ncdeuxun;\:\nctroisun;\:\ncquatreun;\:\ldots\},\\
\ncp_2&=\{\ncdeuxdeux;\:\nctroisdeux,\:\nctroistrois; \:\nctroisquatre,\:\ncquatredeux,\:\ncquatretrois,\:\ncquatrequatre,\:\ncquatrecinq,\:\ncquatresix,\:\ncquatresept;\:\ldots\},\\
\ncp_3&=\{\nctroiscinq;\:\ncquatrehuit,\:\ncquatreneuf,\:\ncquatredix,\:\ncquatreonze,\:\ncquatredouze,\:\ncquatretreize;\:\ldots\},\\
\ncp_4&=\{\ncquatrequatorze;\:\ldots\}.
\end{align*}
By convention, $\ncp_0=\emptyset$.

\begin{defi}
Let $X$ be a finite set. We denote by $\ncp[X]$  the set of bijections $f$ between an element $\pi$ of $\ncp_{|X|}$ and $X$.
If $\sigma:X\longrightarrow Y$ is a bijection between two finite sets and $f:\pi\longrightarrow X$ is an element of $\ncp[X]$, we put $\ncp[\sigma][f]=\sigma\circ f:\pi\longrightarrow Y$. 
This defines a species $\ncp$, which linearization is denoted by $\bfncp$: for any finite set $X$,
\[\bfncp[X]=\mathrm{Vect}(\ncp[X]).\]
\end{defi}

\begin{notation}
the elements of $\ncp[X]$ can be described as noncrossing partitions which blocks are indexed by the set $X$. Such an object$f:\pi\longrightarrow X$  will be denoted by $\pi=(\pi_x)_{x\in X}$, 
where for any $x\in X$, $\pi_x=f^{-1}(x)$. We shall represent graphically the elements of $\ncp[X]$, the index of the blocks being attached to their leftmost leg.
\end{notation}

\begin{example}\begin{align*}
\ncp[1]&=\left\{\nciun{1},\:\ncideuxun{1},\:\ncitroisun{1},\:\nciquatreun{1},\ldots\right\},\\
\ncp[2]&=\left\{\begin{array}{c}
\ncideuxdeux{1}{2},\:\ncideuxdeux{2}{1},\:\ncitroisdeux{1}{2},\:\ncitroisdeux{2}{1},\:\ncitroistrois{1}{2},\:\ncitroistrois{2}{1},\:\ncitroisquatre{1}{2},\:\ncitroisquatre{2}{1},\\
\nciquatresept{1}{2},\:\nciquatresept{2}{1},\:\nciquatredeux{1}{2},\:\nciquatredeux{2}{1},\:\nciquatretrois{1}{2},\:\nciquatretrois{2}{1},\\\
\nciquatrequatre{1}{2},\:\nciquatrequatre{2}{1},\:\nciquatrecinq{1}{2},\:\nciquatrecinq{2}{1},\:\nciquatresix{1}{2},\:\nciquatresix{2}{1},\ldots
\end{array}\right\},\\
\ncp[3]&=\left\{\begin{array}{c}
\ncitroiscinq{1}{2}{3},\:\ncitroiscinq{1}{3}{2},\:\ncitroiscinq{2}{1}{3},\:\ncitroiscinq{2}{3}{1},\:\ncitroiscinq{3}{1}{2},\:\ncitroiscinq{3}{2}{1},\:\\
\nciquatrehuit{1}{2}{3},\:\nciquatrehuit{1}{3}{2},\:\nciquatrehuit{2}{1}{3},\:\nciquatrehuit{2}{3}{1},\:\nciquatrehuit{3}{1}{2},\:\nciquatrehuit{3}{2}{1},\:\\
\nciquatreneuf{1}{2}{3},\:\nciquatreneuf{1}{3}{2},\:\nciquatreneuf{2}{1}{3},\:\nciquatreneuf{2}{3}{1},\:\nciquatreneuf{3}{1}{2},\:\nciquatreneuf{3}{2}{1},\:\\
\nciquatredix{1}{2}{3},\:\nciquatredix{1}{3}{2},\:\nciquatredix{2}{1}{3},\:\nciquatredix{2}{3}{1},\:\nciquatredix{2}{3}{1},\:\nciquatredix{3}{1}{2},\:\\
\nciquatreonze{1}{2}{3},\:\nciquatreonze{1}{3}{2},\:\nciquatreonze{2}{1}{3},\:\nciquatreonze{2}{3}{1},\:\nciquatreonze{3}{1}{2},\:\nciquatreonze{3}{2}{1},\:\\
\nciquatredouze{1}{2}{3},\:\nciquatredouze{1}{3}{2},\:\nciquatredouze{2}{1}{3},\:\nciquatredouze{2}{3}{1},\:\nciquatredouze{3}{1}{2},\:\nciquatredouze{3}{2}{1},\:\\
\nciquatretreize{1}{2}{3},\:\nciquatretreize{1}{3}{2},\:\nciquatretreize{2}{1}{3},\:\nciquatretreize{2}{3}{1},\:\nciquatretreize{3}{1}{2},\:\nciquatretreize{3}{2}{1},\ldots
\end{array}\right\}.\end{align*}\end{example}

\begin{notation}
We shall consider the species $\com \circ \bfncp$: for any finite set $X$, 
\[\Alg[X]=\bigoplus_{\sim \in \eq[X]} \left(\bigotimes_{I \in X/\sim}\bfncp[I]\right),\]
where $\eq[X]$ is the set of all equivalence relations on $X$. 
As $\com$ is a commutative twisted algebra and  the endofunctor $\calF_{\circ \bfncp}$ of the category of species is compatible with the Cauchy tensor product, $\Alg$ is naturally a commutative twisted algebra.
Its product is denoted by $\cdot$.  In other terms, a basis of $\Alg[X]$ is given by commutative monomials of noncrossing partitions $\pi_1\cdot \ldots \cdot \pi_k$, 
where for any $i$, $\pi_i$ is indexed by a nonempty set $I_i$, such that $I_1\sqcup \ldots \sqcup I_k=X$. The unit is the empty monomial $1\in \Alg[\emptyset]$. 
\end{notation}

\begin{example}
\begin{align*}
\Alg[1]&=\vect(\ncp[1]),\\
\Alg[2]&=\vect(\ncp[2])\oplus \vect\left(\nciun{1}\cdot \nciun{2},\:\nciun{1}\cdot \ncideuxun{2},\nciun{2}\cdot \ncideuxun{1},\: \ncideuxun{1}\cdot \ncideuxun{2},\ldots \right)
\end{align*}
Note that
\begin{align*}
\nciun{1}\cdot \ncideuxun{2}&=\ncideuxun{2}\cdot \nciun{1},&\nciun{1}\cdot \ncideuxun{2}&\neq \ncitroistrois{1}{2},&
\nciun{1}\cdot \ncideuxun{2}&\neq \nciun{2}\cdot \ncideuxun{1},&\ncideuxun{1}\cdot \nciun{2}&\neq \ncitroisdeux{1}{2}.
\end{align*}\end{example}

\subsection{The first coproduct}

Let us now define a coproduct on the twisted algebra $\com \circ \bfncp$. 

\begin{notation}
Let $\pi=(\pi_x)_{x\in X}$ be an $X$-indexed noncrossing partition and let $Y\subseteq X$. We put $\displaystyle I_Y=\bigcup_{y\in Y}\pi_y$ 
and we denote by $f_Y$ the unique increasing bijection from $I_Y$ to $[|I_Y|]$. The $Y$-indexed noncrossing partition $\pi_{\mid Y}$ is
\[\pi_{\mid Y}=(f_Y(\pi_y))_{y\in Y}.\]
Graphically, $\pi_Y$ is obtained by deleting the blocks of $\pi$ which are not indexed by an element of $Y$. By convention, $\pi_{\mid \emptyset}=1$. Note that $\pi_{\mid X}=\pi$.
\end{notation}

\begin{defi}\label{defi2.3}
Let $\pi=(\pi_x)_{x\in X}$ be an $X$-indexed noncrossing partition and $I\subseteq X$. We put $\displaystyle I'=\bigcup_{x\in I} \pi_x \subseteq[n]$, 
which we decompose into connected components $I'_1\sqcup \ldots \sqcup I'_k$, that is to say:
\begin{itemize}
\item For any $i\in [k]$, $I'_i$ is an interval of $[n]$.
\item For any $i\in [k-1]$, $\max(I'_i)<\min(I'_{i+1})-1$. 
\end{itemize}
We shall say that $I$ is an ideal of $\pi$ if for any $i\in [k]$, $I'_i$ is the union of blocks of $\pi$.  If $I$ is an ideal of $\pi$, we put
\[\pi_{\mid\cdot I}=\prod_{i\in [k]}^\cdot \pi_{\mid I'_i}.\]
\end{defi}

\begin{example}
Let us consider 
\[\pi=\ncex \in \ncp[7].\]
Then $\{2,3,4,5,6\}$ is an ideal of $\pi$. The associated connected components are $\{2,3,4\}$, $\{6,7,8\}$ and $\{10,11\}$, and
\begin{align*}
\pi_{\mid_\cdot \{2,3,4,5,6\}}&=\ncitroisquatre{2}{3}\cdot \ncitroisun{4}\cdot \ncideuxdeux{5}{6},&\pi_{\mid \{1,7\}}&=\ncexdeux.
\end{align*}\end{example}

Let us give a more usual characterization of the ideals of a noncrossing partitions, with the help of a classical order on the blocks.

\begin{defi}[\textbf{Nesting order}]
Let $\pi$ a noncrossing partition. We define a relation on $\pi$ by
\begin{align*}
&\forall b,b'\in \pi,&b\leq_\pi b'&\Longleftrightarrow b'\subseteq [\min(b),\max(b)].
\end{align*}
Then $\leq_\pi$ is a partial order on $\pi$. 
\end{defi}

\begin{lemma}
Let $\pi=(\pi_x)_{x\in X}$ be an $X$-indexed noncrossing partition and $I\subset X$. Then $I$ is an ideal of $\pi$ if, and only if, 
\begin{align*}
&\forall x,y\in X,&(x\in I\mbox{ and }\pi_x\leq_\pi \pi_y)&\Longrightarrow y\in I.
\end{align*}\end{lemma}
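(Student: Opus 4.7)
The plan is to prove the equivalence by two implications, exploiting the noncrossing property of $\pi$ in both directions.

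For the direction ($\Rightarrow$), I would argue by contradiction. Assume $I$ is an ideal, $x\in I$, $\pi_x\leq_\pi \pi_y$, but $y\notin I$. Since the blocks of $\pi$ are pairwise disjoint and $y\notin I$, the block $\pi_y$ is disjoint from $I'=\bigcup_{z\in I}\pi_z$. Pick any $m\in\pi_y$: since $\pi_y\subseteq[\min(\pi_x),\max(\pi_x)]$, we have $\min(\pi_x)\leq m\leq \max(\pi_x)$, with the two endpoints of $\pi_x$ lying in $I'$. Consider the connected components $I'_j$ and $I'_k$ of $I'$ containing $\min(\pi_x)$ and $\max(\pi_x)$ respectively. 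If $j=k$, then $I'_j$ is an interval containing $m$; but $m\notin I'$, contradicting $I'_j\subseteq I'$. If $j\neq k$, then $\pi_x$ is a single block whose elements are split between two distinct components of $I'$, contradicting the hypothesis that each component is a union of blocks of $\pi$.

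For the direction ($\Leftarrow$), which I expect to be the more delicate one, I would fix a connected component $I'_i$ of $I'$ and show it is a union of blocks. Pick $m\in I'_i$ lying in some block $\pi_x$ with $x\in I$, and claim that $\pi_x\subseteq I'_i$. Otherwise, some $m'\in\pi_x$ lies in a different component $I'_j$; by the separation condition in Definition \ref{defi2.3}, there exists $q$ with $\min(m,m')<q<\max(m,m')$ and $q\notin I'$. The block $\pi_w$ containing $q$ is disjoint from $\pi_x$, and here lies the key combinatorial step: the noncrossing condition forces $\pi_w\subseteq[\min(\pi_x),\max(\pi_x)]$, since any element $r$ of $\pi_w$ outside this interval would combine with the witnesses $\min(\pi_x)<q<\max(\pi_x)$ in $\pi_x$ to produce a crossing configuration. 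Therefore $\pi_x\leq_\pi\pi_w$, the hypothesis on $I$ forces $w\in I$, whence $q\in\pi_w\subseteq I'$, a contradiction. Thus $\pi_x\subseteq I'_i$, and writing $I'_i$ as the union of all such blocks $\pi_x$ exhibits it as a union of blocks of $\pi$.

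The main obstacle is the last extraction in the $(\Leftarrow)$ direction: turning the gap element $q\notin I'$ into a block $\pi_w$ that is genuinely nested inside $\pi_x$. This is exactly where the noncrossing hypothesis is indispensable — for a general set partition, $\pi_w$ could escape $[\min(\pi_x),\max(\pi_x)]$ without the hypothesis of the lemma producing any contradiction. Once this nesting is established, everything else follows from the definitions of $I'$, of the connected components, and of the nesting order.
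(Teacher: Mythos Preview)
Your proposal is correct and follows essentially the same approach as the paper's proof. In both directions you use the same key ingredients as the paper: for $(\Rightarrow)$, the fact that each connected component $I'_i$ is an interval that, by the ideal hypothesis, must contain any block it meets; for $(\Leftarrow)$, locating a gap element $q\notin I'$ between two components, using the noncrossing property to nest its block inside $\pi_x$, and invoking the order-ideal hypothesis to reach a contradiction. The only cosmetic differences are that the paper argues $(\Rightarrow)$ directly rather than by contradiction, and in $(\Leftarrow)$ it specifically shows $\min(\pi_x),\max(\pi_x)\in I'_i$ rather than working with an arbitrary pair $m,m'$.
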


\begin{proof}
$\Longrightarrow$. Let $x,y\in X$, with $x\in I$ and $\pi_x\leq_\pi \pi_y$. As $I$ is an ideal of $\pi$, there exists a connected component $I'_i$ containing $\pi_x$. As $\pi_y\subseteq [\min(\pi_x),\max(\pi_y)]\subseteq I'_i$, $y\in I$. \\

$\Longleftarrow$.  Let $\pi_x$ be a block of $\pi$ intersecting $I'_i$. Let us assume that $\min(\pi_x)\notin I'_i$. As $J$ is the union of blocks of $\pi$, $\min(\pi_x)\in I'$, so belongs to a $I'_p$ with $p<i$. 
By definition of the connected components, there exists $k\in [n]\setminus J$, such that $\max(I'_p)<k<\min(I'_i)$. This element $k$ belongs to a block $\pi_y$ of $\pi$. 
Then $\min(\pi_x)<k\in \pi_y<\min(I'_i)\leqslant \max(\pi_x)$.
As $\pi$ is noncrossing, $\pi_x\leq_\pi \pi_y$. As $I$ is an ideal, $y\in I$, so $k\in I'$: this is a contradiction, so $\min(\pi_x)\in I'_i$. Similarly, $\max(\pi_y)\in I'_i$. As $I'_i$ is an interval, $\pi_x \subseteq I'_i$. 
Therefore, $I'_i$ is the union of the blocks of $\pi$ it intersects. 
\end{proof}

\begin{prop}\label{prop2.6}
Let $X=I\sqcup J$ be a finite set and $\pi \in \ncp[X]$. We define $\Delta_{I,J}(\pi)\in \bfncp[I]\otimes \Alg[J]$ by
\[\Delta(\pi)=\begin{cases}
\pi_{\mid I}\otimes \pi_{\mid_\cdot J} \mbox{ if $J$ is an ideal of $\pi$},\\
0\mbox{ otherwise.}
\end{cases}\]
We extend $\Delta$ as an algebra morphism from $\Alg$ to $(\Alg)^{\otimes 2}$. Then the triple $(\Alg,m,\Delta)$ is a twisted bialgebra. 
\end{prop}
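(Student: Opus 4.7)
The plan is to verify the four defining axioms of a twisted bialgebra for $(\Alg,m,\Delta)$: naturality of $\Delta_{I,J}$ under bijections of index sets, existence of a counit, coassociativity, and compatibility between $\Delta$ and $m$. Since $\Delta$ is declared as the algebra-morphism extension to $\Alg$ of a map defined on single $X$-indexed noncrossing partitions, the last axiom will be largely structural: once $\Delta_{I,J}$ is well defined on $\bfncp[X]$, the twisted bialgebra compatibility between product and coproduct becomes the statement that $\Delta$ is a morphism for the twisted tensor-product algebra structure on $\Alg^{\otimes 2}$, which is built into the multiplicative extension.

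First, for naturality I would observe that each ingredient entering the definition of $\Delta_{I,J}$ --- the ideal condition, the restriction $\pi_{\mid I}$, and the decomposition $\pi_{\mid_\cdot J}$ into connected components of $\bigcup_{x\in J}\pi_x$ in $[n]$ --- depends only on the underlying indexed partition, and is therefore transported by any bijection $\sigma:X\to X'$ of index sets. For the counit, I take $\varepsilon_\Delta:\Alg[\emptyset]=\K\to\K$ to be the canonical identification. Applied to a single $\pi\in\ncp[X]$, the subset $X$ is automatically an ideal of $\pi$ and $\bigcup_{x\in X}\pi_x=[n]$ is a single interval, so $\Delta_{\emptyset,X}(\pi)=1\otimes\pi$; symmetrically $\Delta_{X,\emptyset}(\pi)=\pi\otimes 1$. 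The counit identities then extend multiplicatively to all of $\Alg$.

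The main obstacle is coassociativity: for any partition $X=I\sqcup J\sqcup K$ and any $\pi\in\ncp[X]$, one must establish
\[
(\Delta_{I,J}\otimes\id)\circ \Delta_{I\sqcup J,K}(\pi)=(\id\otimes\Delta_{J,K})\circ \Delta_{I,J\sqcup K}(\pi).
\]
I would exploit the characterization of the preceding lemma --- $I\subseteq X$ is an ideal of $\pi$ if and only if $x\in I$ and $\pi_x\leq_\pi \pi_y$ imply $y\in I$ --- together with the elementary fact that the nesting order on any restriction $\pi_{\mid S}$ is induced by $\leq_\pi$. A short verification then shows that the conditions for non-vanishing on the two sides, namely ``$K$ is an ideal of $\pi$ and $J$ is an ideal of $\pi_{\mid I\sqcup J}$'' on the left and ``$J\sqcup K$ is an ideal of $\pi$ and $K$ is an ideal of $\pi_{\mid J\sqcup K}$'' on the right, are each equivalent to the symmetric statement that both $K$ and $J\sqcup K$ are upward-closed subsets of $X$ under $\leq_\pi$. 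Once the vanishing conditions are aligned, the nonzero outputs are compared using two purely combinatorial facts: iterated restrictions commute, $(\pi_{\mid S})_{\mid T}=\pi_{\mid T}$ for $T\subseteq S$, so the $\pi_{\mid I}$ factor agrees on both sides; and the connected-components decomposition of $\bigcup_{x\in J\sqcup K}\pi_x$ in $[n]$ refines compatibly along the inclusion $K\subseteq J\sqcup K$, so the products on the $J$-side and $K$-side of the resulting triple tensor match block by block. I expect this last refinement to be the heaviest piece of bookkeeping, but it follows from the observation that the connected components of $\bigcup_{x\in K}\pi_x$ are obtained by further decomposing each connected component of $\bigcup_{x\in J\sqcup K}\pi_x$ using the portion of $\pi$ it contains. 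With coassociativity established, the multiplicative extension of $\Delta$ delivers the remaining twisted bialgebra compatibility with $m$ for free.
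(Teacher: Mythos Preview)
Your outline is correct and follows essentially the same route as the paper's proof: verify the counit on generators via $\Delta_{\emptyset,X}(\pi)=1\otimes\pi$ and $\Delta_{X,\emptyset}(\pi)=\pi\otimes1$, obtain the product/coproduct compatibility for free from the multiplicative extension, and reduce coassociativity to matching the non-vanishing conditions and the tensor factors on both sides using the order-theoretic description of ideals. One small imprecision to be aware of: on the right-hand side of coassociativity the second tensor factor is the \emph{monomial} $\pi_{\mid_\cdot J\sqcup K}$, so applying $\Delta_{J,K}$ multiplicatively gives the condition ``$K\cap(J\sqcup K)_p$ is an ideal of $\pi_{\mid (J\sqcup K)_p}$ for every connected component $(J\sqcup K)_p$'' rather than literally ``$K$ is an ideal of $\pi_{\mid J\sqcup K}$''; the two are equivalent via the upward-closure characterization (nested blocks lie in the same connected component when $J\sqcup K$ is an ideal), which is exactly the refinement argument you sketch.
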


\begin{proof}
By definition, $\Delta$ is an algebra morphism.  Let $\pi\in \ncp[X]$. As $\emptyset$ and $X$ are obviously ideals of $\pi$,
\begin{align*}
\Delta_{\emptyset,X}(\pi)&=1\otimes \pi,&\Delta_{X,\emptyset}(\pi)&=\pi\otimes 1,
\end{align*}
Therefore, $\Delta$ has a counit $\varepsilon_\Delta$, defined as the linear map from $\Alg[\emptyset]$ to $\K$ sending $1$ to $1$.

Let us assume that $X=I\sqcup J\sqcup K$. Let $\pi \in \ncp[X]$. We use the notations of Definition \ref{defi2.3}.
\begin{align*}
(\id \otimes \Delta_{J,K})\circ \Delta_{I,J\sqcup K}(\pi)&=\begin{cases}
\displaystyle \pi_{\mid I}\otimes \prod_{i\in [p]}^\cdot (\pi_{\mid (J\sqcup K)_i})_{\mid J}\otimes \prod_{i\in [p]}^\cdot (\pi_{\mid_ (J\sqcup K)_i})_{\mid_\cdot K}\\
\hspace{.5cm}\mbox{if $J\sqcup K$ ideal of $\pi$ and $K\cap (J\sqcup K)_p$ ideal of $\pi_{\mid (J\sqcup K)_p}$ for any $p$},\\
0\mbox{ otherwise}
\end{cases}\\
&=\begin{cases}
\pi_{\mid I}\otimes \pi_{\mid_\cdot J} \otimes \pi_{\mid_\cdot K} \mbox{ if $J\sqcup K$ and $K$ ideals of $\pi$},\\
0\mbox{ otherwise}
\end{cases}\\
&=\begin{cases}
(\pi_{\mid I\sqcup J})_{\mid I}\otimes (\pi_{\mid I\sqcup J})_{\mid_\cdot J} \otimes \pi_{\mid_\cdot K}\mbox{ if $J\sqcup K$ and $K$ ideals of $\pi$},\\
0\mbox{ otherwise}
\end{cases}\\
&=(\Delta_{I,J}\otimes \id)\circ \Delta_{I\sqcup J,K}(\pi).
\end{align*}
By multiplicativity, $\Delta$ is coassociative. \end{proof}

Let us now apply the bosonic Fock functor $\calF$  on $\Alg$, see \cite{Aguiar2010,Foissy41}. As an algebra, $\calF[\Alg]$ is described as follows.

\begin{defi}
We denote by $\alg$ be free commutative algebra generated by $\ncp$. Its product will be denoted by $\cdot$.
A basis of $\alg$ is given by commutative monomials in $\ncp$, and the unit is the empty monomial $1$.
\end{defi}

Observe that in $\alg$,
\begin{align*}
\ncun\cdot \ncdeuxun&=\ncdeuxun\cdot \ncun,&\ncun\cdot \ncdeuxun&\neq \nctroistrois,&\ncdeuxun \cdot \ncun&\neq \nctroisdeux,&\nctroistrois&\neq \nctroisdeux.
\end{align*}
Applying the functor $\calF$  on $\Alg$, we obtain the following coproduct  on the algebra $\alg$: for any noncrossing partition $\pi$ of $[n]$,
\begin{align}
\label{Eq1}\Delta(\pi)&=\sum_{\mbox{\scriptsize $J$ ideal of $\pi$}}\pi_{\mid [n]\setminus J}\otimes \pi_{\mid_\cdot J}.
\end{align}
Colored version can also obtained, using the colored Fock functor of \cite{Foissy41} -- we won't use them in this paper.

\begin{example}\begin{align*}
\Delta(\ncun)&=\ncun\otimes 1+1\otimes \ncun,\\
\Delta(\ncdeuxun)&=\ncdeuxun \otimes 1+1\otimes \ncdeuxun,\\
\Delta(\ncdeuxdeux)&=\ncdeuxdeux\otimes 1+1\otimes \ncdeuxdeux+2\ncun\otimes \ncun,\\
\Delta(\nctroisun)&=\nctroisun\otimes 1+1\otimes \nctroisun,\\
\Delta(\nctroisdeux)&=\nctroisdeux\otimes 1+1\otimes \nctroisdeux+\ncdeuxun\otimes \ncun+\ncun\otimes \ncdeuxun,\\
\Delta(\nctroistrois)&=\nctroistrois\otimes 1+1\otimes \nctroistrois+\ncdeuxun\otimes \ncun+\ncun\otimes \ncdeuxun,\\
\Delta(\nctroisquatre)&=\nctroisquatre\otimes 1+1\otimes \nctroisquatre+\ncdeuxun\otimes \ncun,\\
\Delta(\nctroiscinq)&=\nctroiscinq\otimes 1+1\otimes \nctroiscinq+\ncun\otimes (\ncun\cdot \ncun+2\ncdeuxdeux)+3\ncdeuxdeux\otimes \ncun.
\end{align*}
This coproduct is also introduced in \cite{Foissy38}, with operadic methods. 
\end{example}

\subsection{A link with the Faà di Bruno Hopf algebra}

The Faà di Bruno group is the group of formal series 
\[\GFDB=\left\{x+\sum_{n=1}^\infty a_nx^{n+1}\mid (a_n)_{n\geq 1}\in \K^{\N^*}\right\}\]
with the composition of formal series. For any $n\geq 1$, we consider the map
\[x_n:\left\{\begin{array}{rcl}
\GFDB&\longrightarrow&\K\\
\displaystyle x+\sum_{n=1}^\infty a_nx^{n+1}&\longmapsto&a_n.
\end{array}\right.\]
The Faà di Bruno Hopf algebra is the algebra $\HFDB=\K[x_n\mid n\geq 1]$ generated by these maps, with the coproduct defined by
\begin{align*}
&\forall n\geq 1,\: \forall f,g\in \GFDB,&\Delta(x_n)(f,g)&=x_n(f\circ g).
\end{align*}
For example, if
\begin{align*}
f&=x+\sum_{n=1}^\infty a_nx^{n+1},&g&=x+\sum_{n=1}^\infty b_nx^{n+1},
\end{align*}
then 
\begin{align*}
f\circ g&=x+(a_1+b_1)x^2+(a_2+b_2+2a_1b_1)x^3+(a_3+b_3+2a_1b_2+3a_2b_1+a_1b_1^2)x^4+\cdots,
\end{align*}
which gives
\begin{align*}
\Delta(x_1)&=x_1\otimes 1+1\otimes x_1,\\
\Delta(x_2)&=x_2\otimes 1+1\otimes x_2+2x_1\otimes x_1,\\
\Delta(x_3)&=x_3\otimes 1+1\otimes x_3+2x_1\otimes x_2+3x_2\otimes x_1+x_1\otimes x_1^2.
\end{align*}
It is a graded Hopf algebra, with $x_n$ homogeneous of degree $n$ for any $n\geq 1$. 

\begin{notation}\label{not2.4}
For any $n\geq 1$, we denote by $J_n$ the noncrossing partition $\{\{1\},\ldots,\{n\}\}$:
\begin{align*}
J_1&=\ncun,&J_2&=\ncdeuxdeux,&J_3&=\nctroiscinq,&J_4&=\ncquatrequatorze,\ldots
\end{align*}\end{notation}

\begin{prop}\label{prop2.8}
The following map defines a Hopf algebra morphism:
\[\phiFDB:\left\{\begin{array}{rcl}
(\HFDB,m,\Delta)&\longrightarrow&(\alg,\cdot,\Delta)\\
x_n&\longmapsto&J_n.
\end{array}\right.\]
\end{prop}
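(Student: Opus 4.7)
The plan is as follows. Since $\HFDB = \K[x_n : n \geq 1]$ is free as a commutative algebra, there is a unique algebra morphism $\phiFDB: \HFDB \to \alg$ with $\phiFDB(x_n) = J_n$, so the algebra compatibility is automatic. To promote this to a bialgebra morphism I would reduce, by multiplicativity of the two coproducts and of $\phiFDB$, to verifying
\[
\Delta(J_n) \;=\; (\phiFDB \otimes \phiFDB)\bigl(\Delta(x_n)\bigr)
\]
for every $n \geq 1$. Once this is done, since both $\HFDB$ and $(\alg,\cdot,\Delta)$ are graded by the number of blocks with one-dimensional degree zero part, they are connected Hopf algebras whose antipodes are determined by the bialgebra structure, so $\phiFDB$ automatically intertwines the two antipodes.

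To expand the left-hand side using formula (\ref{Eq1}), I would observe that the nesting order on $J_n$ is trivial, because all blocks are singletons and pairwise incomparable, so \emph{every} subset $J \subseteq [n]$ of indices is an ideal in the sense of Definition \ref{defi2.3}. For such a $J$, the restriction $\pi_{\mid [n]\setminus J}$ equals $J_{n-|J|}$ as a single noncrossing partition (deleting singletons and relabeling), while $I_J = J$ splits into its maximal runs of consecutive integers, so $\pi_{\mid_\cdot J}$ is the product $J_{k_1}\cdot\ldots\cdot J_{k_r}$ of the corresponding $J_{k}$'s. The cleanest way to sum is to parameterize $J$ by $p = n - |J|$ and by the gap sizes $s_0, \ldots, s_p \geq 0$ of $[n]\setminus J$ inside $[n]$, subject to $s_0 + \cdots + s_p = n - p$; with the convention $J_0 := 1$ absorbing empty runs, formula (\ref{Eq1}) then yields
\[
\Delta(J_n) \;=\; \sum_{p=0}^{n} \sum_{\substack{s_0,\ldots,s_p \geq 0 \\ s_0 + \cdots + s_p = n - p}} J_p \;\otimes\; J_{s_0}\cdot J_{s_1}\cdots J_{s_p}.
\]

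For the right-hand side I would directly expand $(f\circ g)(x)$ with $f = \sum_{m\geq 1} a_{m-1}\, x^m$, $g = \sum_{m\geq 1} b_{m-1}\, x^m$ and the convention $a_0 = b_0 = 1$: rewriting $f\circ g = \sum_{p \geq 0} a_p\, g(x)^{p+1}$ and using the multinomial expansion of each $g(x)^{p+1}$ to extract the coefficient of $x^{n+1}$ gives exactly the analogous formula
\[
\Delta(x_n) \;=\; \sum_{p=0}^{n} \sum_{\substack{s_0,\ldots,s_p \geq 0 \\ s_0 + \cdots + s_p = n - p}} x_p \;\otimes\; x_{s_0}\, x_{s_1}\cdots x_{s_p},
\]
with $x_0 := 1$, which matches term by term under $\phiFDB \otimes \phiFDB$. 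The only real obstacle is the combinatorial bookkeeping in the previous paragraph — identifying runs of $J$ with the gaps carved out by $[n]\setminus J$ and checking that empty runs are handled correctly — after which the match with the classical Faà di Bruno expansion is transparent.
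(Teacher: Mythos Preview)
Your proof is correct and follows essentially the same strategy as the paper: compute $\Delta(x_n)$ by expanding the composition $f\circ g$, compute $\Delta(J_n)$ from the combinatorics of ideals of $J_n$, and match. The only difference is bookkeeping --- the paper groups terms by the multiplicity vector $(k_0,\ldots,k_n)$ of block sizes on the right tensorand and computes the resulting multinomial coefficient $\dfrac{(k_0+\cdots+k_n)!}{k_0!\cdots k_n!}$, whereas your ordered gap parameterization $(s_0,\ldots,s_p)$ makes the bijection term-by-term and avoids that coefficient computation altogether; this is arguably a bit cleaner.
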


\begin{proof}
Let us first give formulas for the coproduct of $x_n$ in $\HFDB$. If $(a_n)_{n\geq 1}$ and $(b_n)_{n\in \geq 1}$ are two sequence, putting $a_0=b_0=1$ in order to simplify,
\begin{align*}
\left(\sum_{k=1}^\infty a_{k-1}x^k\right)\circ \left(\sum_{l=1}^\infty b_{l-1}x^l\right)&=\sum_{k=1}^\infty a_{k-1}\left(\sum_{l=1}^\infty b_{l-1}x^l\right)^k\\
&=\sum_{n=0}^\infty \sum_{1k_0+\cdots+(n+1)k_n=n+1}\frac{(k_0+\cdots+k_n)!}{k_0!\ldots k_n!}a_{k_0+\cdots+k_n-1}b_1^{k_1}\ldots b_n^{k_n}x^{n+1}. 
\end{align*}
We deduce that for any $n\geq 1$,
\begin{align}
\nonumber \Delta(x_n)&=\sum_{1k_0+\cdots+(n+1)k_n=n+1}\frac{(k_0+\cdots+k_n)!}{k_0!\ldots k_n!}x_{k_0+\cdots+k_n-1}\otimes x_1^{k_1}\ldots x_n^{k_n}\\
\label{Eq2}&=\sum_{1k_0+\cdots+(n+1)k_n=n+1}\frac{(k_0+\cdots+k_n)!}{k_0!\ldots k_n!}x_{n-1k_1-\cdots-nk_n}\otimes x_1^{k_1}\ldots x_n^{k_n},
\end{align}
with the convention $x_0=1$ when $(k_0,\ldots,k_n)=(0,\ldots,0,1)$. \\

Let us now give a formula for the coproduct of $J_n$. There exists scalars $\lambda_{k_0,\ldots,k_n}$ such that for any $n\geq 1$,
\begin{align*}
\Delta(J_n)&=\sum_{1k_0+\cdots+(n+1)k_n=n+1}\lambda_{k_0,\ldots,k_n}J_{n-1k_1-\cdots-nk_n}\otimes J_1^{k_1}\ldots J_n^{k_n}.
\end{align*}
Let us fix $(k_0,\ldots,k_n)$ such that $1k_0+\cdots+(n+1)k_n=n+1$.  Then $\lambda_{k_0,\ldots,k_n}$ is the number of way of inserting $k_1$ copies of $J_1$, $\ldots$, $n+1$ copies of $J_n$ between the blocks of $J_{n-1k_1-\cdots-nk_n}$,
such that any two of these copies are separated by at least one block of $J_{n-1k_1-\cdots-nk_n}$. Therefore,
\begin{align*}
\lambda_{k_0,\ldots,k_n}&=\binom{n-1k_1-\cdots-nk_n+1}{k_1+\cdots+k_n} \frac{(k_1+\cdots+k_n)!}{k_1!\ldots k_n!}\\
&=\binom{k_0+\cdots+k_n}{k_1+\cdots+k_n} \frac{(k_1+\cdots+k_n)!}{k_1!\ldots k_n!}\\
&=\frac{(k_0+\cdots+k_n)!}{k_0!(k_1+\cdots+k_n)!} \frac{(k_1+\cdots+k_n)!}{k_1!\ldots k_n!}\\
&=\frac{(k_0+\cdots+k_n)!}{k_0!\ldots k_n!}.
\end{align*}
The first binomial coefficient corresponds to the choices of the $k_1+\cdots+k_n$ spaces in the $n-1k_1-\cdots-nk_n+1$ available in $J_{n-1k_1-\cdots-nk_n}$  where we shall insert the blocks,
and the second factor takes in account the possible orders on the copies of $J_i$ we insert. We finally obtain
\begin{align}
 \label{Eq3}\Delta(J_n)&=\sum_{1k_0+\cdots+(n+1)k_n=n+1}\frac{(k_0+\cdots+k_n)!}{k_0!\ldots k_n!}J_{n-1k_1-\cdots-nk_n}\otimes J_1^{k_1}\ldots J_n^{k_n},
\end{align}
with the convention $J_0=1$ when $(k_0,\ldots,k_n)=(0,\ldots,0,1)$. Comparing (\ref{Eq2}) and (\ref{Eq3}), we immediately obtain that $\phiFDB$ is a Hopf algebra morphism. 
\end{proof}

\subsection{The extraction-contraction coproduct}

Recall that $\eq$ is the species which associates to any finite set $X$ the set of equivalence relations on $X$.

\begin{defi}
Let $\pi$ be an $X$-indexed noncrossing partition and $\sim \in \eq[X]$.
\begin{enumerate}
\item For any $\overline{x}\in X/\sim$, we put $\displaystyle \pi_{\overline{x}}=\bigsqcup_{y\in \overline{x}}\pi_y$, and $\pi/\sim=\left(\pi_{\overline{x}}\right)_{\overline{x}\in X/\sim}$. This defines an $X/\sim$-indexed partition.
\item We shall say that $\sim \in \eq_c[\pi]$ if $X/\sim \in \ncp[X/\sim]$. 
\item We put $\displaystyle \pi\mid_\cdot \sim=\prod_{\overline{x}\in X/\sim}^\cdot \pi_{\mid \overline{x}}$. This defines an element of $\Alg[X]$. 
\end{enumerate}\end{defi}

\begin{prop}\label{prop2.10}
For any $X$-indexed noncrossing partition, for any $\sim\in \eq[X]$, we put
\[\delta_\sim(\pi)=\begin{cases}
\pi/\sim\otimes \pi\mid_\cdot \sim \mbox{ if }\sim\in \eq_c[\pi],\\
0\mbox{ otherwise}.
\end{cases}\]
We extend this by multiplicativity to $\Alg$. More precisely, if $X=X_1\sqcup \ldots \sqcup X_k$ and if for any $i$, $\pi_i\in \ncp[X_i]$, then, for any $\sim\in \eq[X]$,
\[\delta_\sim(\pi_1\cdot \ldots \cdot \pi_k)=\begin{cases}
\delta_{\sim \cap X_1^2}(\pi_1)\cdot \ldots \cdot \delta_{\sim \cap X_1^2}(\pi_k) \mbox{ if }\sim=(\sim \cap X_1^2)\sqcup\ldots \sqcup (\sim \cap X_k^2),\\
0\mbox{ otherwise}.
\end{cases}\]
Then $\delta$ is a contraction-extraction coproduct on $\Alg$ in the sense of \cite{Foissy41}, compatible with $m$ and $\Delta$. 
\end{prop}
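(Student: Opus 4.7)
The plan is to verify the axioms of a contraction-extraction coproduct from \cite{Foissy41}, namely naturality, coassociativity, counitality, multiplicativity, and then the cointeraction axiom with $\Delta$. Multiplicativity is built into the definition by the explicit extension from $\bfncp$ to $\Alg$, so there is nothing to check there; the counit is the linear form $\epsilon_\delta$ sending $\pi \in \ncp[X]$ to $1$ if the unique $\sim$ on $X$ is the equality relation and $0$ otherwise, and counitality follows by inspection.

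For naturality, a bijection $\sigma:X\longrightarrow Y$ induces a bijection $\eq[X]\longrightarrow \eq[Y]$ that preserves the subset $\eq_c$: indeed, $\sim$ lies in $\eq_c[\pi]$ if and only if the fused partition $\pi/\sim$ is again noncrossing, a condition which only depends on the combinatorial type and is invariant under relabelling. Both the quotient $\pi \longmapsto \pi/\sim$ and the fibers $\pi \longmapsto \pi_{\mid \overline{x}}$ commute with $\bfncp[\sigma]$, so the diagram defining naturality commutes summand by summand.

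For coassociativity, the plan is to use the classical bijection between $\eq[X]$ and pairs $(\sim_1,\overline{\sim}_2)$ with $\sim_1 \in \eq[X]$ and $\overline{\sim}_2 \in \eq[X/\sim_1]$, where we set $\sim_1\subseteq \sim$ and $\overline{\sim}_2=\sim/\sim_1$. The key combinatorial claim is that $\sim \in \eq_c[\pi]$ if and only if $\sim_1 \in \eq_c[\pi]$ and $\overline{\sim}_2 \in \eq_c[\pi/\sim_1]$, which holds because fusing blocks in two stages produces a noncrossing partition precisely when fusing them in one stage does. Granting this, the identifications $(\pi/\sim_1)/\overline{\sim}_2=\pi/\sim$ on the first tensor factor and $(\pi_{\mid \overline{x}})_{\mid \overline{y}}=\pi_{\mid \overline{z}}$ on the fibers match the two iterated contractions, and the telescoping products on the extraction side agree up to the canonical associativity of $\cdot$ in $\Alg$.

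The main obstacle, and the step I expect to require the most care, is the cointeraction axiom with $\Delta$. A term of $(\Delta\otimes \id)\circ \delta(\pi)$ is indexed by a pair $(\sim,\overline{J})$ where $\sim \in \eq_c[\pi]$ and $\overline{J}$ is an ideal of $\pi/\sim$, while a term of $m_{1,3,24}\circ (\delta\otimes \delta)\circ \Delta(\pi)$ is indexed by a triple $(J,\sim_I,\sim_J)$ where $J$ is an ideal of $\pi$ with complement $I$, and $\sim_I \in \eq_c[\pi_{\mid I}]$, $\sim_J \in \eq_c[\pi_{\mid_\cdot J}]$. The plan is to exhibit a bijection between these indexing data: given $(\sim,\overline{J})$, the preimage of $\overline{J}$ under $X \longrightarrow X/\sim$ is a $\sim$-saturated ideal $J$ of $\pi$ (using the characterisation of ideals via the nesting order), and $\sim$ restricts to equivalences on $I$ and $J$ satisfying the noncrossing conditions. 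Conversely a saturated ideal together with compatible equivalences on $I$ and $J$ glue to a single $\sim \in \eq_c[\pi]$ inducing an ideal of $\pi/\sim$. The delicate point is to check that on the right-hand side the fiber products originally distributed across the second and fourth tensor factors regroup, via the relabelling $m_{1,3,24}$, to reconstruct the single product of fibers appearing on the left. Once this matching is made explicit, equality of the two expressions follows term by term.
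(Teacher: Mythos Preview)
Your outline follows the paper's strategy closely, but the ``key combinatorial claim'' you state for coassociativity is false as written, and this is the heart of that step. You assert that $\sim \in \eq_c[\pi]$ if and only if $\sim_1 \in \eq_c[\pi]$ and $\overline{\sim}_2 \in \eq_c[\pi/\sim_1]$. Take $\pi=J_4=\{\{1\},\{2\},\{3\},\{4\}\}$, let $\sim$ be the equivalence with a single class, and let $\sim_1$ have classes $\{1,3\}$ and $\{2,4\}$. Then $\sim_1\subseteq\sim$ and $\pi/\sim$ has one block (hence noncrossing, so $\sim\in\eq_c[\pi]$), but $\pi/\sim_1$ has blocks $\{1,3\}$ and $\{2,4\}$, which cross: $\sim_1\notin\eq_c[\pi]$. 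The correct equivalence, which the paper proves, is between the \emph{two} conditions
\[
\bigl(\sim_1\in\eq_c[\pi]\ \text{and}\ \overline{\sim}_2\in\eq_c[\pi/\sim_1]\bigr)
\quad\Longleftrightarrow\quad
\bigl(\sim\in\eq_c[\pi]\ \text{and}\ \sim_1\in\eq_c[\pi\mid_\cdot\sim]\bigr),
\]
each side carrying \emph{two} constraints. You have dropped the constraint $\sim_1\in\eq_c[\pi\mid_\cdot\sim]$ on the right, which is exactly what rules out the crossing refinement in the counterexample. The intuition ``fusing in two stages is noncrossing iff fusing in one stage is'' is simply not true: intermediate stages can cross even when the final result does not. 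You also omit the verification that $(\id\otimes\delta_{\sim'})\circ\delta_\sim=0$ when $\sim'\not\subseteq\sim$, which the paper handles separately.

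Two smaller points: your description of the counit (``$1$ if the unique $\sim$ on $X$ is the equality relation'') is garbled; in the paper $\epsilon_\delta(\pi)=1$ exactly when $\pi$ has a single block. And in the cointeraction step, the assertion that the preimage of an ideal $\overline{J}$ of $\pi/\sim$ is an ideal of $\pi$ is precisely one of the implications that needs proof (it is where the paper uses the nesting-order characterisation of ideals), not something you can take for granted.
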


\begin{proof}
Let $\pi\in \ncp[X]$ and let $\sim,\sim'\in \eq[X]$, with $\sim'\subseteq \sim$. We denote  by $\overline{\sim}$ the equivalence on $X/\sim'$ induced by $\sim$. 
\begin{align}
\label{Eq4}(\delta_\sim\otimes \id)\circ \delta_{\sim'}(\pi)&=\begin{cases}
(\pi/\sim')/\overline{\sim}\otimes(\pi/ \sim')|\overline{\sim}\otimes \pi\mid_\cdot \sim'\\
\hspace{.5cm}\mbox{if $\sim'\in \eq_c[\pi]$ and $\overline{\sim}\in \eq_c[\pi/\sim']$},\\
0\mbox{ otherwise}.
\end{cases}\\
\label{Eq5} (\id \otimes \delta_{\sim'})\circ \delta_\sim(\pi)&=\begin{cases}
\pi/\sim\otimes (\pi\mid_\cdot \sim)/\sim'\otimes (\pi\mid_\cdot \sim)\mid_\cdot \sim'\\
\hspace{.5cm}\mbox{if $\sim\in \eq_c[\pi]$ and $\sim'\in \eq_c[\pi\mid_\cdot \sim]$},\\
0\mbox{ otherwise}.
\end{cases}\end{align}
Note that, as indexed partitions or monomials of indexed partitions,
\begin{align*}
(\pi/\sim')/\overline{\sim}&=\pi/\sim,&(\pi/\sim')\mid \overline{\sim}&=(\pi\mid\sim)/\sim',&(\pi\mid_\cdot \sim)\mid_\cdot \sim'&=\pi\mid_\cdot \sim'.
\end{align*}
Let us assume that $\sim'\in \eq_c[\pi]$ and $\overline{\sim}\in \eq_c[\pi/\sim']$. Then $(\pi/\sim')/\overline{\sim}=\pi/\sim$ is noncrossing, so  $\sim\in \eq_c[\pi]$.
Moreover, $(\pi/\sim')\mid \overline{\sim}=(\pi\mid\sim)/\sim'$ is noncrossing as $\pi/\sim'$ is noncrossing, so $\sim'\in \eq_c[\pi\mid_\cdot \sim]$.
Conversely, let us assume that $\sim\in \eq_c[\pi]$ and $\sim'\in \eq_c[\pi\mid_\cdot \sim]$. Then $(\pi/\sim')/\overline{\sim}=\pi/\sim$ is noncrossing, so  $\overline{\sim}\in \eq_c[\pi/\sim']$.
Moreover, $(\pi/\sim')\mid \overline{\sim}=(\pi\mid\sim)/\sim'$ is noncrossing, so $\sim'\in \eq_c[\pi \mid_\cdot \sim]$. As $\sim'\subseteq \sim$, $\sim' \in \eq_c[\pi]$. 

Finally, the two conditions in (\ref{Eq4}) and (\ref{Eq5}) are equivalent. This gives the coassociativity of $\delta$ when applied to any noncrossing partition, then, by multiplicativity, the coassociativity of $\delta$.\\

Let now  $\pi\in \ncp[X]$ and let $\sim,\sim'\in \eq[X]$, without $\sim'\subseteq \sim$. This means that at least one class of $\sim$ is not the union of the classes of $\sim'$ that it contains.
Hence, denoting by $\overline{x}_1,\ldots,\overline{x}_k$ the classes of $\sim$, $\sim'$ is not equal to the union of the parts $\sim'\cap \overline{x}_i^2$. Consequently,
\[\delta_{\sim'}(\pi\mid_\cdot \sim)=\delta_{\sim'}(\pi_{\mid \overline{x}_1}\cdot \ldots \cdot \pi_{\mid \overline{x}_k})=0.\]
We obtain that $(\id \otimes \delta_{\sim'})\circ \delta_\sim=0$. \\

Let $\varepsilon_\delta$ be the twisted algebra morphism from $\Alg$ to $\com$ defined by
\begin{align*}
&\forall \pi\in \ncp[X],&\varepsilon_\delta[X](\pi)&=\begin{cases}
1\mbox{ if $\pi$ has only one block},\\
0\mbox{ otherwise}.
\end{cases}\end{align*}
Let $\pi\in \ncp[X]$. The unique $\sim\in \eq_c[X]$ such that $\pi\mid_\cdot \sim$ is a monomial which all factors are noncrossing partitions with only one block is the equality of $X$, which we denote by $\sim_1$. Moreover, $\pi/\sim_1=\pi$.
Hence, $\epsilon_\delta$ is a right counit for $\delta$. The unique $\sim\in \eq_c[X]$ such that $\pi/\sim$ has only one block is the equivalence $\sim_0$ on $X$ with only one class. 
As $\pi\mid_\cdot \sim_0=\pi$, $\varepsilon_\delta$ is a left counit for $\delta$. \\

Let $X$ and $Y$ be two disjoint sets, $\pi\in \ncp[X\sqcup Y]$, $\sim_X\in \eq[X]$ and $\sim_Y \in \eq[Y]$. We put $\sim=\sim_X\sqcup \sim_Y$. 
\begin{align}
\label{Eq6} (\Delta_{X/\sim_X,Y/\sim_Y}\otimes \id)\circ \delta_\sim(\pi)&=\begin{cases}
(\pi/\sim)_{\mid X}/\sim_X\otimes (\pi/\sim)_{\mid_\cdot Y}\otimes \pi\mid_\cdot \sim\\
\hspace{.5cm}\mbox{if $\sim\in \eq_c[\pi]$ and $Y/\sim_Y$ is an ideal of $\pi/\sim$},\\
0\mbox{ otherwise}.
\end{cases}\\
\label{Eq7} m_{1,3,24}\circ (\delta_X\otimes \delta_Y)\circ \Delta_{X,Y}(\pi)&=\begin{cases}
(\pi_{\mid X})/\sim_X \otimes (\pi_{\mid_\cdot Y})/\sim_Y\otimes (\pi_{\mid X})|\sim_X \cdot (\pi_{\mid_\cdot Y})|\sim_Y\\
\hspace{.5cm}\mbox{if $Y$ is an ideal of $\pi$},\\
\hspace{.5cm} \mbox{$\sim_X\in \eq_c[\pi_{\mid X}]$ and $\sim_Y\in \eq_c[\pi_{\mid_\cdot Y}]$},\\
0\mbox{ otherwise}.
\end{cases}\end{align}

Note that, as indexed partitions or as monomials of indexed partitions,
\begin{align*}
(\pi/\sim)\mid (X/\sim_X)&=(\pi_{\mid X})/\sim_X,&(\pi/\sim)_{\mid_\cdot Y}&=(\pi_{\mid_\cdot Y})/\sim_Y,&\pi\mid_\cdot \sim&=((\pi_{\mid X})|\sim_X) \cdot ((\pi_{\mid_\cdot Y})|\sim_Y).
\end{align*}

Let us assume that  $\sim\in \eq_c[\pi]$ and $Y/\sim_Y$ is an ideal of $\pi/\sim$. Let $\overline{x},\overline{x}' \in \pi$, such that $\overline{x}\leq_\pi \overline{x}'$ and $\overline{x}\in Y$. We denote by $\varpi:(X\sqcup Y)\longrightarrow (X\sqcup Y)/\sim$
the canonical surjection. Then $\varpi(\overline{x})\leq_{\pi/\sim}\varpi(\overline{x}')$ in $\pi/\sim$. As $Y/\sim_Y$ is an ideal of $\pi/\sim$, $\varpi(\overline{x}')\in Y/\sim_Y$, so $\overline{x}'\in Y$. 
By restriction, $(\pi\mid X)/\sim_X=(\pi/\sim)\mid X/\sim_X$ is noncrossing as $\pi/\sim$ is noncrossing, so $\sim_X\in \eq_c[\pi_{\mid X}]$. Similarly, $\sim_Y\in \eq_c[\pi_{\mid_\cdot Y}]$.

Let us now assume that $Y$ is an ideal of $\pi$, $\sim_X\in \eq_c[\pi_{\mid X}]$ and $\sim_Y\in \eq_c[\pi_{\mid_\cdot Y}]$. Let $\overline{x},\overline{x}'\in (X\sqcup Y)/\sim$. 
\begin{itemize}
\item If $\overline{x}\in Y/\sim_Y$ and $\overline{x}'\in X/\sim_X$, as $\sim_Y \in \eq_c[\pi_{\mid_\cdot Y}]$, $\sim_Y$ is compatible with the connected components of $Y$, as defined in Definition \ref{defi2.3}. 
Hence, $\pi_{\overline{x}'}$ cannot cross $\pi_{\overline{x}}$. Similarly, if $\overline{x}\in X/\sim_X$ and $\overline{x}'\in Y/\sim_Y$, then $\pi_{\overline{x}}$ cannot cross $\pi_{\overline{x}'}$.
\item If both $\overline{x},\overline{x}'$ belong to $X/\sim_X$, As $(\pi_{\mid X})/\sim_X=(\pi/\sim)_{\mid X/\sim_X}$ is noncrossing, $\pi_{\overline{x}}$ cannot cross $\pi_{\overline{x}'}$. 
Similarly, if  both $\overline{x},\overline{x}'$ belong to $Y/\sim_Y$, then $\pi_{\overline{x}}$ cannot cross $\pi_{\overline{x}'}$. 
\end{itemize}
So $\sim\in \eq_c[\pi]$. By contraction, $Y/\sim_Y$ is an ideal of $\pi/\sim$. \\

Finally, the two conditions in (\ref{Eq6}) and (\ref{Eq7}) are equivalent. This gives the compatibility of $\delta$ and $\Delta$. \end{proof}

Applying the functor $\calF$ on $\Alg$, we obtain the following coproduct on $\alg$, see \cite{Foissy41}. For any noncrossing partition $\pi$,
\begin{align}
\label{Eq8}\delta(\pi)&=\sum_{\sim \in \eq_c[\pi]}\pi/\sim\otimes \pi\mid_\cdot \sim.
\end{align}
Colored versions could also be obtained with colored Fock functors \cite{Foissy41} (when the space of colors is a commutative and cocommutative, not necessarily unitary bialgebra). 

\begin{example}\begin{enumerate}
\item n $\alg$,
\begin{align*}
\delta(\ncun)&=\ncun\otimes \ncun,\\
\delta(\ncdeuxun)&=\ncdeuxun\otimes \ncdeuxun,\\
\delta(\ncdeuxdeux)&=\ncdeuxdeux\otimes \ncun\cdot \ncun+\ncdeuxun\otimes \ncdeuxdeux,\\
\delta(\nctroisun)&=\nctroisun\otimes \nctroisun,\\
\delta(\nctroisdeux)&=\nctroisdeux\otimes \ncdeuxun\cdot\ncun+\nctroisun\otimes \nctroisdeux,\\
\delta(\nctroistrois)&=\nctroistrois\otimes \ncdeuxun\cdot\ncun+\nctroisun\otimes \nctroistrois,\\
\delta(\nctroisquatre)&=\nctroisquatre\otimes \ncdeuxun\cdot\ncun+\nctroisun\otimes \nctroisquatre,\\
\delta(\nctroiscinq)&=\nctroiscinq\otimes \ncun \cdot \ncun\cdot\ncun+(\nctroisdeux+\nctroistrois+\nctroisquatre)\otimes \ncun\cdot \ncdeuxdeux+\nctroisun\otimes \nctroiscinq.
\end{align*}
This is the second coproduct of \cite{Foissy38}. 
\item Recall that $J_n$ is defined in Notation \ref{not2.4}. For any $n\geq 1$,
\[\delta(J_n)=\sum_{\mbox{\scriptsize $\pi$ noncrossing partition of $[n]$}} \pi\otimes \prod^\cdot_{b\in \pi}J_{|b|}.\]
\end{enumerate}\end{example}

\section{Fundamental polynomial invariant}

\subsection{Construction and combinatorial interpretation}

As a double bialgebra, $\alg$ is the symmetric algebra generated by noncrossing partitions, with the two multiplicative coproducts defined by (\ref{Eq1}) and (\ref{Eq8}). 
We denote by $\phi_\ncp$ the unique double bialgebra morphism from $\alg$ to $\K[X]$, see Theorem \ref{theo1.4}. For any $\pi\in \ncp$,
\begin{align*}
\phi_\ncp(\pi)&=\sum_{k=1}^\infty \sum_{\substack{\pi=I_1\sqcup \ldots \sqcup I_k,\\
\forall p\in [k], \: I_p\sqcup\ldots \sqcup I_k\:\mbox{\scriptsize ideal of }\pi}} \epsilon_\delta(\pi_{\mid I_1})\epsilon_\delta(\pi_{\mid_\cdot I_2})\ldots \epsilon_\delta(\pi_{\mid_\cdot I_k}) H_k(X)\\
 &=\sum_f \epsilon_\delta(\pi_{\mid f^{-1}(1)})\epsilon_\delta(\pi_{\mid_\cdot f^{-1}(2)})\ldots \epsilon_\delta(\pi_{\mid_\cdot f^{-1}(\max(f))}) H_{\max(f)}(X),
\end{align*}
where the sum runs over all surjective maps $f:\pi\longrightarrow [\max(f)]$ such that for any $b,b'\in \pi$,
\[b\leqslant_\pi b'\Longrightarrow f(b)\leqslant f(b').\]
Moreover, due to the definition of $\epsilon_\delta$, the term corresponding to $f$ is not zero if, and only if, for any $i>1$, $\pi_{\mid_\cdot f^{-1}(i)}$ is a monomial of noncrossing partitions with only one block each,
and $\pi_{\mid f^{-1}(1)}$ is a noncrossing partition with only one block. If this holds, then the contribution of $f$ is $1$. Reformulating:

\begin{prop}\label{prop3.1}
For any noncrossing partition $\pi$, for any $n\in \N_{>0}$, $\phi_\ncp(G)(n)$ is the number of maps $f:\pi\longrightarrow [n]$ such that, for any $b,b'\in \pi$,
\begin{itemize}
\item If $b\leqslant_\pi b'$ and $b\neq b'$, then $f(b)<f(b')$.
\item If $f(b)=f(b')$ and $\max(b)<\min(b')$, then there exists $b''\in \pi$ such that 
\begin{align*}
&]\max(b),\min(b')[\cap b''\neq \emptyset&&\mbox{and}&f(b'')<f(b)&=f(b'). 
\end{align*}
\end{itemize}
Such a map $f$ will be called a valid $n$-coloration of $\pi$.
\end{prop}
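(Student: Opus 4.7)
The plan is to invoke Theorem~\ref{theo1.4}, which gives
\[\phi_\ncp(\pi)=\sum_{k=1}^\infty \epsilon_\delta^{\otimes k}\circ \tdelta^{(k-1)}(\pi)\,H_k(X),\]
and to interpret the right-hand side combinatorially. First I would expand $\tdelta^{(k-1)}(\pi)$ by iterating~(\ref{Eq1}): the sum ranges over surjective maps $f\colon\pi\to[k]$ monotone with respect to $\leq_\pi$ (so that $f^{-1}(\{p,\dots,k\})$ is an ideal for every $p$), with associated tensor $\pi_{\mid f^{-1}(1)}\otimes \pi_{\mid_\cdot f^{-1}(2)}\otimes\cdots\otimes \pi_{\mid_\cdot f^{-1}(k)}$, exactly as displayed in the paragraph preceding the proposition.

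Then I would check when the scalar $\epsilon_\delta^{\otimes k}$ applied to such a tensor is non-zero. Because $\epsilon_\delta$ returns $1$ on single-block noncrossing partitions, $0$ otherwise, and is multiplicative on monomials, the contribution of $f$ equals $1$ if and only if $|f^{-1}(1)|=1$ and, for every $i\geq 2$, each connected component of $I_i := \bigcup_{b\in f^{-1}(i)} b$ contains exactly one block of $f^{-1}(i)$. Equivalently: for every pair of distinct $b,b'\in f^{-1}(i)$ with $\max(b)<\min(b')$ there is an integer in $]\max(b),\min(b')[$ belonging to some block $b''$ with $f(b'')\neq i$.

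The core of the proof is to show that this separation condition, together with monotonicity of $f$, is equivalent to the two bullet points of the proposition (whose $b$-condition demands the stronger $f(b'')<i$). The strict-inequality bullet is obtained by showing that a pair $b<_\pi b'$ at the same level $i$ would force a connected component of $I_i$ to contain two blocks, violating the $\epsilon_\delta$-condition at level $i$. For the $b$-condition, by noncrossing any block $b''$ meeting $]\max(b),\min(b')[$ is either (A) entirely inside $]\max(b),\min(b')[$, (B) an outer block of $b$ (i.e., $b''\leq_\pi b$), or (C) an outer block of $b'$; in cases (B) and (C), monotonicity yields $f(b'')\leq i$, hence $f(b'')<i$ once $f(b'')\neq i$. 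The delicate case is (A): if every intermediate $b''$ had $f(b'')>i$, one iteratively passes to outer blocks in $\leq_\pi$ and invokes the $\epsilon_\delta$-condition at levels $>i$ to produce a separator of $f$-value strictly less than $i$ between $b$ and $b'$.

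Finally I would evaluate at $X=n$ using $H_k(n)=\binom{n}{k}$. A valid $n$-coloration $c\colon \pi\to[n]$ is encoded uniquely by the pair $(R,f)$, where $R=c(\pi)\subseteq[n]$ has some cardinality $k$ and $f$ is the composition of $c$ with the unique increasing bijection $R\to[k]$; the two conditions of the proposition are invariant under this relabeling and match precisely those characterized on $f$. Hence
\[\phi_\ncp(\pi)(n)=\sum_{k=1}^\infty \binom{n}{k}\,\#\{\text{surjective valid }f\colon\pi\to[k]\}=\#\{\text{valid }n\text{-colorations of }\pi\},\]
which is the claim. The main obstacle is the Case~(A) analysis of the third paragraph, which requires applying the $\epsilon_\delta$-condition globally across all levels rather than level by level.
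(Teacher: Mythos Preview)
Your overall strategy matches the paper's, but there is a genuine error in how you read the iterated coproduct. The $p$-th tensor factor of $\tdelta^{(k-1)}(\pi)$ is not $\pi_{\mid_\cdot f^{-1}(p)}$ with connected components computed inside $[n]$; it is $(\pi_{\mid f^{-1}([p])})_{\mid_\cdot f^{-1}(p)}$, where the connected components are computed \emph{after the legs belonging to blocks of level $>p$ have been deleted} (this is how the paper's shorthand $\pi_{\mid_\cdot J}$ must be read when $J$ is not an ideal of $\pi$; see the last line of the coassociativity proof in Proposition~\ref{prop2.6}). With the correct interpretation, two level-$p$ blocks $b,b'$ with $\max(b)<\min(b')$ lie in distinct components if and only if some $j\in\,]\max(b),\min(b')[$ belongs to a block of level \emph{strictly smaller than} $p$. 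This is exactly the $b$-condition of the proposition, so no Case~(A) repair is needed.

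Your weaker reading (``separator with $f(b'')\neq i$'') is not equivalent to the two bullets, so the Case~(A) argument you sketch cannot succeed. A concrete counterexample: take $\pi=J_4=\{\{1\},\{2\},\{3\},\{4\}\}$ and $f(\{1\})=1$, $f(\{2\})=f(\{4\})=2$, $f(\{3\})=3$. This $f$ is monotone, surjective onto $[3]$, has $|f^{-1}(1)|=1$, and in $[4]$ the set $\{2,4\}$ splits into two singleton components; so your condition holds at every level. Yet the actual second tensor factor is $(\pi_{\mid\{\{1\},\{2\},\{4\}\}})_{\mid_\cdot\{\{2\},\{4\}\}}=\ncdeuxdeux$, on which $\epsilon_\delta$ vanishes, and indeed $f$ is not a valid coloration (the only separator $\{3\}$ has $f$-value $3>2$). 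Your Case~(A) induction has nothing to iterate on here: $\{3\}$ has no outer block and level $3$ contains a single block. Once you correct the description of the tensor factors, the rest of your outline (including the final $H_k(n)=\binom{n}{k}$ count) is fine and coincides with the paper's argument.
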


\begin{example}\begin{enumerate}
\item A valid $n$-coloration of $\pi=\ncquatretreize$ is a map $f:[3]\longrightarrow [n]$ such that 
\begin{align*}
f(1)&<f(2),& f(1)&<f(3),&f(2)&\neq f(3). 
\end{align*}
Therefore, for any $n\geq 1$,
\[\phi_\ncp(\ncquatretreize)(n)=\frac{n(n-1)(n-2)}{3}.\]
We obtain that
\[\phi_\ncp(\ncquatretreize)=\frac{X(X-1)(X-2)}{3}.\]
\item A valid $n$-coloration of $\nctroiscinq$ is a map $f:[3]\longrightarrow [n]$ such that
\begin{align*}
f(1)&\neq f(2),&f(2)&\neq f(3),&f(1)=f(3)&\Longrightarrow f(2)<f(1).
\end{align*} 
Therefore, for any $n\geq 1$,
\[\phi_\ncp(\nctroiscinq)(n)=n(n-1)(n-2)+\frac{n(n-1)}{2}=n(n-1)\left(n-\frac{3}{2}\right).\]
The first term correspond to the valid $n$-colorations with $f(1)\neq f(3)$ and the second with $f(1)=f(3)$. We obtain that 
\[\phi_\ncp(\nctroiscinq)=X(X-1)\left(X-\frac{3}{2}\right).\]
\item See the Table at the end of the article for more examples of $\phi_\ncp(\pi)$.
\end{enumerate}\end{example}

Let us now give an inductive way to compute these chromatic polynomials.

\begin{notation}
Let $\pi$ be a noncrossing partition. We denote by $\base(\pi)$ the set of blocks of $\pi$ which are minimal for the partial order $\leq_\pi$.
\end{notation}

\begin{prop} \label{prop3.2}
Let $\pi$ be a noncrossing partition. Then
\begin{align*}
\phi_\ncp(\pi)(X+1)&=\phi_\ncp(\pi)\\
&+\sum_{b\in \base(\pi)}
\phi_\ncp\left( \pi_{\mid [1,\min(b)[}\right)\phi_\ncp\left( \pi_{\mid_\cdot [\min(b),\max(b)]\setminus b}\right) \phi_\ncp\left(\pi_{\mid]\max(b),\infty[}\right).
\end{align*}
Together with $\phi_\ncp(\pi)(0)=0$, this characterizes $\phi_\ncp(\pi)$. 
\end{prop}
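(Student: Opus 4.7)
The plan is to verify the recursion combinatorially via Proposition \ref{prop3.1}, which identifies $\phi_\ncp(\pi)(n)$ with the number of valid $n$-colorations of $\pi$ for every positive integer $n$. Fix such an $n$ and partition the valid $(n+1)$-colorations $f:\pi\longrightarrow [n+1]$ according to whether the color $1$ is used. Colorations avoiding $1$ are in bijection with valid $n$-colorations of $\pi$ by shifting colors down by one, contributing $\phi_\ncp(\pi)(n)$ to the count.

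For a coloration using color $1$, I claim that exactly one block has that color and that it lies in $\base(\pi)$. If $b_0\notin\base(\pi)$ had color $1$, there would exist $b'<_\pi b_0$, forcing $f(b')<1$, a contradiction. Any two distinct blocks of $\base(\pi)$ are pairwise $\leq_\pi$-incomparable; and in a noncrossing partition, two $\leq_\pi$-incomparable blocks are necessarily horizontally separated, so the $b$-condition would require a block of color less than $1$ between them, which is impossible.

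Fix $b\in\base(\pi)$ and count valid $(n+1)$-colorations with $f(b)=1$. The remaining blocks of $\pi$ split into three groups: those lying entirely in $[1,\min(b)[$, those contained in $[\min(b),\max(b)]\setminus b$ (which by non-crossing lie within the gaps of $b$, yielding the monomial $\pi_{\mid_\cdot [\min(b),\max(b)]\setminus b}$), and those lying in $]\max(b),\infty[$. I would show that a map sending these blocks to $\{2,\ldots,n+1\}$ extends $f(b)=1$ to a valid $(n+1)$-coloration of $\pi$ if and only if its restriction to each group is a valid coloration of the corresponding (monomial of) noncrossing partition(s). For cross-region pairs of equal color, the block $b$ itself with color $1$ witnesses the $b$-condition, and nesting constraints involving $b$ are trivially satisfied since $f(b)=1$ is minimal. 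For within-region pairs, any witness block $b''$ in $\pi$ must itself lie in the same region: the noncrossing property of $\pi$ combined with the minimality of $b$ in $\base(\pi)$ prevents $b''$ from straddling $b$ (such a $b''$ would contain $b$ strictly in its convex hull, contradicting $b\in\base(\pi)$) and prevents $b''$ from having elements in another region. After shifting colors down by one, these colorations correspond to valid $n$-colorations of $\pi_{\mid [1,\min(b)[}\cdot\pi_{\mid_\cdot [\min(b),\max(b)]\setminus b}\cdot\pi_{\mid ]\max(b),\infty[}$, whose number is the $b$-summand by multiplicativity of $\phi_\ncp$.

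Summing the two contributions gives the identity at every positive integer $n$, hence as polynomials in $X$. The characterization follows: $\phi_\ncp(\pi)(0)=0$ by the explicit formula of Theorem \ref{theo1.4}, since each $H_k$ with $k\geq 1$ vanishes at $0$; the recursion then determines $\phi_\ncp(\pi)$ uniquely by induction on $|\pi|$, since every partition appearing on the right-hand side has strictly fewer blocks than $\pi$. The main obstacle in this plan is the independence verification in the previous paragraph, in particular showing that any block of $\pi$ witnessing the $b$-condition between two blocks in a single region must stay inside that region, which is precisely where the minimality of $b$ in $\base(\pi)$ is essential.
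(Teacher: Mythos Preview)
Your proof is correct and follows essentially the same approach as the paper: both partition valid $(n+1)$-colorations according to $f^{-1}(1)$, observe that this preimage is either empty or a single element of $\base(\pi)$, and establish a bijection with valid $n$-colorations of $\pi$ or of the product $\pi_{\mid [1,\min(b)[}\cdot\pi_{\mid_\cdot [\min(b),\max(b)]\setminus b}\cdot\pi_{\mid ]\max(b),\infty[}$ respectively. The paper's proof is considerably terser and simply asserts the bijection in the second case, whereas you carefully verify that the $b$-condition is preserved under restriction to each region and that $b$ itself serves as the witness for cross-region pairs; this extra justification is exactly what the paper elides.
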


\begin{proof}
Let $n\geq 1$ and let $f:\pi\longrightarrow [n+1]$ be a $(n+1)$-valid coloration of $\pi$. By the increasing condition, $f^{-1}(1)$ is included in $\base(\pi)$.
If it is not empty, by the second condition, it is reduced to a singleton. In the first case, it corresponds, up to a lift of 1, to a $n$-valid coloration of $\pi$.
In the second case, denoting $\{b\}=f^{-1}(1)$, it corresponds to a $n$-valid coloration of $ \pi_{\mid [1,\min(b)[}\cdot \pi_{\mid [\min(b),\max(b)]\setminus b}\cdot\pi_{\mid]\max(b),\infty[}$. 
Summing, we obtain the announced result. 
\end{proof}

\subsection{Antipode}

In order to compute the antipode, we put, for any noncrossing partition $\pi$,
\[\mu_\ncp(\pi)=\phi_\ncp(\pi)(-1).\]
This defines a character of $\alg$. Theorem \ref{theo1.3} states that the antipode of $(\alg,m,\Delta)$ is
\[S=(\mu_\ncp\otimes \id_{\alg})\circ \delta.\]

\begin{notation} 
We denote by $\cat_n=\displaystyle \dfrac{1}{n+1}\binom{2n}{n}$ is the $n$-th Catalan number.
\[\begin{array}{|c||c|c|c|c|c|c|c|c|c|c|c|}
\hline n&0&1&2&3&4&5&6&7&8&9&10\\
\hline\hline\cat_n&1&1&2&5&14&42&132& 429&1430& 4862&16796\\
\hline \end{array}\]
Recall that
\begin{align}
\label{Eq9}&\forall k\geq 1,&\cat_k&=\sum_{i=1}^{k}\cat_{i-1}\cat_{k-i}.
\end{align}
The number of noncrossing partitions of $[n]$ is $\cat_n$. The formal series of Catalan numbers is
\[C=\sum_{n=0}^\infty \cat_nX^n=\dfrac{1-\sqrt{1-4X}}{2X}.\]
For more details, see Entry A000108 of the OEIS \cite{Sloane}.  
\end{notation}

\begin{lemma}\label{lemma3.3}
The character $\mu_\ncp$ on $\alg$ can be computed by induction on the number of blocks, putting for any noncrossing partition $\pi$ of $[n]$,
\[\mu_\ncp(\pi)=(-1)^{|\base(\pi)|}\cat_{|\base(\pi)|} \mu_\ncp\left(\pi_{\mid_\cdot [n] \setminus \base(\pi)}\right).\]
The antipode $S$ of $(\alg,m,\Delta)$ is given on any noncrossing partition $\pi$ by
\[S(\pi)=\sum_{\sim \in \eq_c[\pi]}\mu_\ncp(\pi/\sim)\pi\mid_\cdot \sim.\]
\end{lemma}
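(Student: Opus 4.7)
\emph{Proof plan.} The second assertion is immediate from Theorem \ref{theo1.3}: part 2 applied to the double bialgebra morphism $\phi_\ncp$ identifies $\mu_\ncp=\phi_\ncp(\cdot)(-1)$ as the convolution inverse of $\epsilon_\delta$ for $*$, so by part 1 the antipode is $S=(\mu_\ncp\otimes\id)\circ\delta$. Inserting the explicit formula (\ref{Eq8}) for $\delta$ on a noncrossing partition yields the claimed expression.

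For the inductive formula for $\mu_\ncp$, the starting point is Proposition \ref{prop3.2}. Evaluating at $X=-1$ and using that $\phi_\ncp(\pi)(0)=0$ for any $\pi$ with at least one block (visible from the expansion of $\phi_\ncp$ in the basis $(H_k)_{k\geq 0}$, which has no constant term on $\ker\varepsilon_\Delta$), one obtains
\[
\mu_\ncp(\pi)=-\sum_{b\in\base(\pi)}\mu_\ncp\bigl(\pi_{\mid[1,\min(b)[}\bigr)\,\mu_\ncp\bigl(\pi_{\mid_\cdot[\min(b),\max(b)]\setminus b}\bigr)\,\mu_\ncp\bigl(\pi_{\mid]\max(b),\infty[}\bigr).
\]
I then proceed by strong induction on $|\pi|$. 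Order the base blocks of $\pi$ as $b_1<\cdots<b_k$ by minimum; since $\pi$ is noncrossing and the $b_i$ are the outermost blocks, the ranges $[\min(b_i),\max(b_i)]$ form a partition of $[n]$ into consecutive intervals. Set $\tau_i=\pi_{\mid_\cdot[\min(b_i),\max(b_i)]\setminus b_i}$, the product of noncrossing partitions filling the gaps of $b_i$. One checks that $\pi_{\mid_\cdot[n]\setminus\base(\pi)}=\tau_1\cdot\tau_2\cdots\tau_k$, that $\pi_{\mid[1,\min(b_i)[}$ is a noncrossing partition whose base blocks are exactly $b_1,\ldots,b_{i-1}$ with corresponding interiors $\tau_1,\ldots,\tau_{i-1}$, and symmetrically that $\pi_{\mid]\max(b_i),\infty[}$ has base blocks $b_{i+1},\ldots,b_k$ with interiors $\tau_{i+1},\ldots,\tau_k$.

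Each of these two side-pieces has strictly fewer blocks than $\pi$, so the induction hypothesis applies and, using multiplicativity of $\mu_\ncp$, yields
\[
\mu_\ncp\bigl(\pi_{\mid[1,\min(b_i)[}\bigr)=(-1)^{i-1}\cat_{i-1}\prod_{j<i}\mu_\ncp(\tau_j),\quad
\mu_\ncp\bigl(\pi_{\mid]\max(b_i),\infty[}\bigr)=(-1)^{k-i}\cat_{k-i}\prod_{j>i}\mu_\ncp(\tau_j).
\]
Substituting into the recursion collapses the sign to a common $(-1)^{k-1}$ and factors $\prod_j\mu_\ncp(\tau_j)$ out:
\[
\mu_\ncp(\pi)=(-1)^k\Bigl(\sum_{i=1}^{k}\cat_{i-1}\cat_{k-i}\Bigr)\prod_{j=1}^{k}\mu_\ncp(\tau_j).
\]
The Catalan recurrence (\ref{Eq9}) replaces the bracketed sum by $\cat_k$, and multiplicativity of $\mu_\ncp$ rewrites $\prod_j\mu_\ncp(\tau_j)=\mu_\ncp(\tau_1\cdots\tau_k)=\mu_\ncp(\pi_{\mid_\cdot[n]\setminus\base(\pi)})$, giving the claimed identity.

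The only mildly technical step is the structural decomposition attached to a base block $b_i$: verifying that the three restrictions left of $b_i$, inside $b_i$, and right of $b_i$ partition the remaining blocks of $\pi$ along base blocks as described. This reduces to the elementary observation that, in a noncrossing partition, the base blocks have pairwise disjoint ranges whose union is $[n]$ and inside each of which nest the corresponding $\tau_i$; no serious obstacle arises.
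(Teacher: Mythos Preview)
Your proof is correct and follows essentially the same route as the paper: evaluate Proposition~\ref{prop3.2} at $X=-1$, apply the inductive formula to the two side-pieces of each base block, factor out $\prod_j\mu_\ncp(\tau_j)$ by multiplicativity, and invoke the Catalan recurrence~(\ref{Eq9}). The only cosmetic difference is that you induct on $|\pi|$ whereas the paper inducts on $|\base(\pi)|$; both choices work for the same reason, since the side-pieces have strictly fewer base blocks \emph{and} strictly fewer blocks overall.
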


\begin{proof}
We evaluate the formula of Proposition \ref{prop3.2} in $X=-1$. We obtain
\[\mu_\ncp(\pi)=-\sum_{b\in \base(\pi)}
\mu_\ncp( \pi_{\mid [1,\min(b)[})\mu_\ncp( \pi_{\mid_\cdot [\min(b),\max(b)]\setminus b}) \mu_\ncp(\pi_{\mid]\max(b),\infty[}).\]
We prove the formula for $\mu_\ncp(\pi)$ by induction on $k=|\base(\pi)|$. We put $\base(\pi)=\{b_1,\ldots,b_k\}$
with $\max(b_i)+1=\min(b_{i+1})$ for any $i\in [k-1]$. If $k=1$, then $\mu_\ncp(\pi_{\mid \base(pi)})=-1$,
which immediately gives the result. If $k\geq 2$, then, by the induction hypothesis,
\begin{align*}
\mu_\ncp(\pi)&=-\sum_{i=1}^k \mu_\ncp(\pi_{\mid [1,\max(b_{i-1})]})\mu_\ncp(\pi_{\mid_\cdot [\min(b_i),\max(b_i)])\setminus b_i})\mu_\ncp(\pi_{\mid [\min(b_{i+1},+\infty[})\\
&=-\sum_{i=1}^k (-1)^{i-1}\cat_{i-1} \mu_\ncp(\pi_{\mid_\cdot [1,\max(b_{i-1})]\setminus (b_1\cup \ldots \cup b_{i-1})})
\mu_\ncp(\pi_{\mid_\cdot [\min(b_i),\max(b_i)])\setminus b_i})\\
&\hspace{1cm} (-1)^{k-i}\cat_{k-i} \mu_\ncp(\pi_{\mid_\cdot [\min(b_{i+1},+\infty[\setminus (b_{i+1}\cup \ldots \cup b_k)}) \\
&=(-1)^k\mu_\ncp\left(\pi_{\mid_\cdot [n] \setminus \base(\pi)}\right)\sum_{i=1}^k\cat_{i-1}\cat_{k-i}\\
&=(-1)^k\cat_k \mu_\ncp\left(\pi_{\mid_\cdot [n] \setminus \base(\pi)}\right). 
\end{align*}
We used (\ref{Eq9}) for the last equality.
\end{proof}

Let us now solve this induction. We shall need the following combinatorial notions:

\begin{defi}\label{defi3.4}\begin{enumerate}
\item For any $n\geq 1$, let us denote by $\parti(n)$ the set of partitions of $n$:
\[\parti(n)=\{(k_1,\ldots,k_n)\in \N^n\mid 1k_1+\cdots+nk_n=n\}.\]
\item Let $\pi$ a noncrossing partition on $[n]$. 
\begin{enumerate}
\item For any $i\in [n]$, we denote by $p_i(\pi)$ the number of blocks of $\pi$ of cardinality $i$ and put
\[p(\pi)=(p_1(\pi),\ldots,p_n(\pi))\in \parti(n).\]
\item We define an equivalence $\sim_\pi $on $\pi$ be the following:
for any $b,b'\in \pi$, $b\sim_\pi b'$ if there exists a sequence $(b_1,\ldots,b_k)$ of elements of $\pi$ such that:
\begin{itemize}
\item ($b_1=b$ and $b_k=b'$) or $(b_1=b'$ and $b_k=b$).
\item For any $i\in [k-1]$, $\max(b_i)+1=\min(b_{i+1})$.
\end{itemize}\end{enumerate}\end{enumerate}\end{defi}

\begin{example}
Let us consider 
\[\pi=\ncextrois.\]
We index the blocs of $\pi$ in order to simplify the incoming description of $\sim_\pi$: The classes of $\sim_\pi$ are $\{1,7,8\}$, $\{5,6\}$ and the singletons $\{2\}$, $\{3\}$, $\{4\}$. Moreover, $p(\pi)=(4,2,1,0,1,0,0,0)$. 
\end{example}

\begin{prop}\label{prop3.5}
Let $\pi$ be a noncrossing partition of $[n]$. Then 
\[\mu_\ncp(\pi)=(-1)^{|\pi|}\prod_{X\in \pi/\sim_\pi} \cat_{|X|}.\]
\end{prop}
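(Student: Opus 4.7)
The plan is to induct on $|\pi|$, using the inductive formula
\[
\mu_\ncp(\pi) \;=\; (-1)^{|\base(\pi)|}\cat_{|\base(\pi)|}\, \mu_\ncp\!\left(\pi_{\mid_\cdot [n]\setminus \base(\pi)}\right)
\]
from Lemma \ref{lemma3.3}, together with the multiplicativity of the character $\mu_\ncp$. The base case $|\pi|=1$ is direct: the unique block is $\base(\pi)$, the single class of $\pi/\sim_\pi$ has size $1$, and both sides equal $-\cat_1=-1$.

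For the inductive step, order $\base(\pi)=\{b_1,\ldots,b_k\}$ from left to right. Since base blocks are exactly the blocks maximal for interval inclusion, their intervals $[\min(b_i),\max(b_i)]$ partition $[n]$ and satisfy $\max(b_i)+1=\min(b_{i+1})$. Writing $b_i=\{a^i_1<\cdots<a^i_{r_i}\}$, each gap $G_{i,j}=[a^i_j+1,\,a^i_{j+1}-1]$ (for $1\le j\le r_i-1$) carries a sub-noncrossing-partition $\pi^{(i,j)}$, and as a monomial in $\alg$,
\[
\pi_{\mid_\cdot [n]\setminus \base(\pi)} \;=\; \prod^{\cdot}_{i,j} \pi^{(i,j)}.
\]

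The combinatorial heart of the argument is the following claim: the quotient $\pi/\sim_\pi$ consists of one distinguished class equal to $\base(\pi)$ of size $k$, together with the disjoint union of $\pi^{(i,j)}/\sim_{\pi^{(i,j)}}$ over all gaps. Indeed, the base blocks are pairwise $\sim_\pi$-related since $\max(b_i)+1=\min(b_{i+1})$. Conversely, a chain $c_0,c_1,\ldots$ realizing $\sim_\pi$ and starting at a non-base block $c_0\in G_{i,j}$ cannot leave $G_{i,j}$: to pass beyond its right endpoint $a^i_{j+1}-1$ would require a subsequent block of minimum $a^i_{j+1}\in b_i$, forcing this block to equal $b_i$, which contradicts $\min(b_i)=a^i_1<a^i_{j+1}$ (since $j\ge 1$); the left side is symmetric. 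Within a gap, the adjacency condition $\max(c_p)+1=\min(c_{p+1})$ is preserved under translation, hence coincides with the one defining $\sim_{\pi^{(i,j)}}$, proving the claim.

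Combining the pieces, by the induction hypothesis applied to each $\pi^{(i,j)}$ (which has strictly fewer blocks than $\pi$) and by the multiplicativity of $\mu_\ncp$,
\[
\mu_\ncp(\pi) \;=\; (-1)^k\cat_k \prod_{i,j}(-1)^{|\pi^{(i,j)}|}\prod_{Y\in\pi^{(i,j)}/\sim_{\pi^{(i,j)}}}\cat_{|Y|}.
\]
Since $\sum_{i,j}|\pi^{(i,j)}|=|\pi|-k$, the signs collapse to $(-1)^{|\pi|}$, and the structural claim above reassembles the $\cat_k$ factor (coming from the class $\base(\pi)$) with the remaining Catalan factors into $\prod_{X\in\pi/\sim_\pi}\cat_{|X|}$, completing the induction. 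The main obstacle is the structural claim on $\sim_\pi$, whose proof hinges on the observation that the endpoints of a gap lie strictly between $\min$ and $\max$ of the enclosing base block, preventing any adjacency chain within a gap from escaping into a base block.
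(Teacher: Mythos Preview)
Your proof is correct and follows essentially the same route as the paper: both argue by induction on $|\pi|$ via Lemma~\ref{lemma3.3}, and both rest on the structural claim that the $\sim_\pi$-classes are $\base(\pi)$ together with the $\sim$-classes of the factors of $\pi_{\mid_\cdot [n]\setminus\base(\pi)}$. The paper states this claim without justification and concludes in one line; you spell out why adjacency chains cannot escape a gap, which is exactly the content the paper leaves implicit.
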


\begin{proof}
We put $\pi_{\mid_\cdot [n]\setminus \base(\pi)}=\pi_1\cdot \ldots \cdot \pi_k$. Then the elements of $\pi/\sim_\pi$ are $\base(\pi)$ and the elements of $\pi_1/\sim_{\pi_1},\ldots, \pi_k/\sim_{\pi_k}$. The result comes from Lemma \ref{lemma3.3} by a direct induction. 
\end{proof}

\begin{example}
See the Table at the end of the article for examples of $\mu_\ncp(\pi)$.
\end{example}

\begin{example} \label{ex3.2} Here are a few examples of antipodes.
\begin{align*}
S(\ncun)&=-\ncun,\\
S(\ncdeuxdeux)&=-\ncdeuxdeux+2\:\ncun\cdot\ncun,\\
S(\nctroiscinq)&=-\nctroiscinq+(1+2+2)\:\ncun\cdot \ncdeuxdeux-5\:\ncun\cdot \ncun\cdot\ncun\\
&=-\nctroiscinq+5\:\ncun\cdot \ncdeuxdeux-5\:\ncun\cdot \ncun\cdot\ncun,\\
S(\ncquatrequatorze)&=-\ncquatrequatorze+(2+1)\:\ncdeuxdeux \cdot \ncdeuxdeux-(5+5+5+2+2+2)\:\ncun\cdot\ncun\cdot \ncdeuxdeux\\
&+(2+1+1+2)\:\ncun\cdot\nctroiscinq+14\:\ncun\cdot\ncun\cdot\ncun\cdot\ncun\\
&=-\ncquatrequatorze+3\:\ncdeuxdeux \cdot \ncdeuxdeux-21\:\ncun\cdot\ncun\cdot \ncdeuxdeux+6\:\ncun\cdot\nctroiscinq+14\:\ncun\cdot\ncun\cdot\ncun\cdot\ncun.
\end{align*}\end{example}

As a particular case:

\begin{prop}\label{prop3.6}
For any $n\geq 1$,
\begin{align*}
S(J_n)&=\sum_{\mbox{\scriptsize $\pi$ noncrossing partition of $[n]$}} \mu_\ncp(\pi)\prod^\cdot_{b\in \pi}J_{|b|}\\
&=\sum_{(k_1,\ldots,k_n)\in \parti(n)} (-1)^{k_1+\cdots+k_n}\left(
\sum_{\substack{\mbox{\scriptsize $\pi$ noncrossing partition of $[n]$},\\ p(\pi)=(k_1,\ldots,k_n)}}
\prod_{X\in \pi/\sim_\pi} \cat_{|X|}\right)J_1^{k_1}\ldots J_n^{k_n}.
\end{align*}
In particular, the coefficient of $J_1^{\cdot n}$ in $S(J_n)$ is $(-1)^n \cat_n$. 
\end{prop}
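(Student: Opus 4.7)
The plan is to combine Lemma \ref{lemma3.3} with the explicit formula for $\delta(J_n)$ given at the end of Section 2, then reorganise the resulting sum by the block-size type $p(\pi)$ and finally invoke the closed form for $\mu_\ncp$ given by Proposition \ref{prop3.5}.

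First, by Lemma \ref{lemma3.3}, $S=(\mu_\ncp \otimes \id)\circ \delta$. From Example 2.3 (second item), one has
\[\delta(J_n)=\sum_{\mbox{\scriptsize $\pi$ noncrossing partition of $[n]$}} \pi\otimes \prod^\cdot_{b\in \pi}J_{|b|},\]
so applying $\mu_\ncp \otimes \id$ immediately yields the first displayed equality.

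Next, I would regroup this sum according to the datum $p(\pi)=(k_1,\ldots,k_n)\in\parti(n)$. For a noncrossing partition $\pi$ of $[n]$ with $p(\pi)=(k_1,\ldots,k_n)$, the number of blocks is $|\pi|=k_1+\cdots+k_n$, and the monomial $\prod^\cdot_{b\in \pi}J_{|b|}$ depends only on $p(\pi)$ and equals $J_1^{k_1}\cdots J_n^{k_n}$. Proposition \ref{prop3.5} gives
\[\mu_\ncp(\pi)=(-1)^{|\pi|}\prod_{X\in \pi/\sim_\pi}\cat_{|X|}=(-1)^{k_1+\cdots+k_n}\prod_{X\in \pi/\sim_\pi}\cat_{|X|},\]
and collecting terms sharing the same $p(\pi)$ produces the second displayed equality.

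For the last assertion, I would identify which noncrossing partitions of $[n]$ contribute to the monomial $J_1^{\cdot n}$, namely those with $p(\pi)=(n,0,\ldots,0)$. Such a $\pi$ has all blocks of size $1$, which forces $\pi=J_n$. For this unique $\pi$, every singleton $\{i\}$ satisfies $\max\{i\}+1=\min\{i+1\}$, so $\sim_{J_n}$ has a single class of cardinality $n$, and Proposition \ref{prop3.5} gives $\mu_\ncp(J_n)=(-1)^n \cat_n$. Hence the coefficient of $J_1^{\cdot n}$ in $S(J_n)$ is $(-1)^n\cat_n$.

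No substantive obstacle is expected: the argument is essentially a direct substitution followed by a bookkeeping reorganisation. The only point requiring a small verification is the identification of the $(n,0,\ldots,0)$ partition-type contribution, which is immediate once one notices that $J_n$ is the unique noncrossing partition of $[n]$ all of whose blocks are singletons.
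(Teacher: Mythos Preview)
Your proposal is correct and matches the paper's approach: the paper introduces this proposition with ``As a particular case:'' and gives no separate proof, relying implicitly on exactly the combination you spell out---Lemma \ref{lemma3.3} (or equivalently $S=(\mu_\ncp\otimes\id)\circ\delta$) applied to the explicit formula for $\delta(J_n)$, followed by Proposition \ref{prop3.5} and regrouping by $p(\pi)$. Your verification of the $J_1^{\cdot n}$ coefficient via $\mu_\ncp(J_n)=(-1)^n\cat_n$ is also exactly what the paper intends.
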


Let $(a_n)_{n\geq 1}$ be a sequence of scalars. We can see it as a character of $\HFDB$ thanks to the series
\[f=x+\sum_{n=1}^\infty a_nx^{n+1}\in \GFDB.\]
The coefficients of the inverse for the composition $f^{\circ-1}$ of this formal series are given by the antipode of $\HFDB$. Through the Hopf algebra morphism $\phiFDB$ of Proposition \ref{prop2.8}, it is given by the antipode of $\alg$ applied to the elements $J_n$. 
More precisely, we can write
\[f^{\circ-1}=x+\sum_{n=1}^\infty \left(\sum_{(k_1,\ldots,k)\in \parti(n)} \lambda_{(k_1,\ldots,k_n)}a_1^{k_1}\ldots a_n^{k_n}\right)x^{n+1},\]
where the scalars $\lambda_{(k_1,\ldots,k_n)}$ are given by
\begin{align}
\label{Eq10} \lambda_{(k_1,\ldots,k_n)}&=(-1)^{k_1+\cdots+k_n}
\sum_{\substack{\mbox{\scriptsize $\pi$ noncrossing partition of $[n]$},\\ p(\pi)=(k_1,\ldots,k_n)}}
\prod_{X\in \pi/\sim_\pi} \cat_{|X|}.
\end{align}
These coefficients $\lambda_{(k_1,\ldots,k_n)}$ are given by Entry A111785 of the OEIS \cite{Sloane}, see also  Entry A133437 up to a normalization by factorials. For example,
\begin{align*}
f^{\circ-1}&=x-a_1x^2+(-a_2+a_1^2)x^3+(-a_3+5a_1a_2-5a_1^3)x^4\\
&+(-a_4+3a_2^2-21a_1^2a_2+6a_1a_3+14a_1^4)x^5+\cdots
\end{align*}
In particular, for any $n\geq 1$,
\begin{align*}
\lambda_{(n,0,\ldots,0)}&=(-1)^n \cat_n,& \lambda_{(0,\ldots,0,1)}&=-1.
\end{align*}

\subsection{More results on $\phi_\ncp$ for basic noncrossing partitions}

Let us now give a way to compute the polynomial $\phi_\ncp(\pi)$, when $\pi=\base(\pi)$.

\begin{prop}
Let $\pi$ be a nonempty noncrossing partition such that $\pi=\base(\pi)$.  Then $\phi_\ncp(\pi)$ is a polynomial $P_{|\pi|}$ which depends uniquely on $|\pi|$, characterized by $P_0(X)=1$ and 
\begin{align}
\label{Eq11}
&\forall k\geqslant 1,&&\begin{cases}
P_k(X+1)=P_k(X)+\displaystyle\sum_{i=1}^k P_{i-1}(X)P_{k-i}(X),\\
P_k(0)=0.
\end{cases}\end{align}\end{prop}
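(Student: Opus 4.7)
The plan is to apply the inductive formula of Proposition \ref{prop3.2} after first giving a combinatorial characterization of the partitions $\pi$ with $\pi=\base(\pi)$.

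First I would show that $\pi=\base(\pi)$ if and only if every block of $\pi$ is an interval of $[n]$. If $b$ is an interval $[m,M]$, then every position of $[m,M]$ already belongs to $b$, so no distinct block $b'$ can satisfy $b'\subseteq[\min(b),\max(b)]$, whence $b$ is minimal for $\leq_\pi$. Conversely, if $b$ is not an interval and $p\in[\min(b),\max(b)]\setminus b$, then the block $b'$ containing $p$ must lie entirely in $[\min(b),\max(b)]$ by the noncrossing condition, giving a strict relation $b<_\pi b'$, so $b$ is not minimal and $b'\notin \base(\pi)$ either (since $b'$ is dominated, not $b$; but the point is some block fails to be minimal). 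Consequently, when $\pi=\base(\pi)$ has $k$ blocks, they are consecutive intervals $b_1<b_2<\cdots<b_k$ partitioning $[n]$.

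Next I would unpack Proposition \ref{prop3.2} under this structure. For each $b=b_i$, the middle factor $\pi_{\mid_\cdot[\min(b_i),\max(b_i)]\setminus b_i}$ is the empty monomial (hence its image under $\phi_\ncp$ is $1$) because $b_i$ is an interval; the left factor $\pi_{\mid[1,\min(b_i)[}$ consists of the intervals $b_1,\ldots,b_{i-1}$ and therefore again satisfies the hypothesis of the proposition with $i-1$ blocks; analogously, $\pi_{\mid]\max(b_i),\infty[}$ satisfies it with $k-i$ blocks.

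This sets up the induction on $k$. Taking $P_0=1$ and assuming, for every $\pi'$ with $\pi'=\base(\pi')$ and $|\pi'|<k$, that $\phi_\ncp(\pi')=P_{|\pi'|}$, Proposition \ref{prop3.2} then gives
\[\phi_\ncp(\pi)(X+1)-\phi_\ncp(\pi)(X)=\sum_{i=1}^{k}P_{i-1}(X)P_{k-i}(X),\]
an expression that depends only on $k$. Combined with $\phi_\ncp(\pi)(0)=\varepsilon_\Delta(\pi)=0$ (since $\pi$ is nonempty and $\phi_\ncp$ is a double bialgebra morphism), this determines $\phi_\ncp(\pi)$ uniquely: any polynomial $P$ is recovered from $P(0)$ together with the polynomial $P(X+1)-P(X)$, because the values $P(n)$ for $n\in\N$ are fixed by telescoping and a polynomial is determined by its values on $\N$. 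The common value is the desired $P_k$, which satisfies (\ref{Eq11}).

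The only delicate point is the combinatorial equivalence ``$\pi=\base(\pi) \Longleftrightarrow$ every block is an interval''; once this is established, Proposition \ref{prop3.2} collapses directly to (\ref{Eq11}) and the induction is essentially automatic.
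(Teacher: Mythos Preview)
Your proof is correct and follows the same route as the paper's (terse) proof, which also invokes Proposition \ref{prop3.2} and observes that for the $i$-th block the three factors become $P_{i-1}$, $1$, and $P_{k-i}$. Your explicit characterization ``$\pi=\base(\pi)$ iff every block is an interval'' makes the argument more transparent than the paper's version; note only that in your converse direction the strict relation $b<_\pi b'$ shows that $b'$ (not $b$) fails to be minimal, exactly as you acknowledge in the parenthetical.
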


\begin{proof} 
This is a direct consequence of Proposition \ref{prop3.2}: if $b$ is the $i$-th block of $\pi$ (totally ordered from left to right), 
then $ \pi_{\mid [1,\min(b)[}$ is a partition $\pi'$ with $i-1$ blocks such that $\base(\pi')=\pi'$,
$\pi_{\mid [\min(b),\max(b)]\setminus b}=1$ and $\pi_{\mid]\max(b),\infty[}$  a partition $\pi''$ with $|\pi|-i$ blocks such that $\base(\pi'')=\pi''$. 
\end{proof}

By their combinatorial nature (and Theorem \ref{theo1.4}), it is natural to decompose these polynomials in the basis of Hilbert polynomials $(H_i(X))_{i\geqslant 0}$. We put
\begin{align}
\label{Eq12} P_n(X)&=\sum_{i=1}^\infty a_{i,n} H_i(X).
\end{align}
Recall that $J_n$ is defined in Notation \ref{not2.4}.
By construction of $P_n(X)=\phi_\ncp(J_n)$, the scalar $a_{i,n}$ is the number of surjective valid  $n$-colorings $c:[n]\longrightarrow [i]$ of $J_n$, 
that is to say the number of surjective maps $c:[n]\longrightarrow[i]$ such that
\begin{align*}
&\forall p,r\in [n],&(p<r \mbox{ and }c(p)=c(r))&\Longrightarrow (\exists q\in ]p,r[, \: c(q)<c(p)).
\end{align*}

The following proposition allows to compute inductively these coefficients $a_{i,n}$:

\begin{prop}\label{prop3.8}
Let $1\leqslant i\leqslant n$. The coefficients $a_{i,n}$ defined by (\ref{Eq12}) can be computed by induction on $i$ by
\begin{align*}
a_{i,n}&=\begin{cases}
\delta_{n,1}\mbox{ if i=1},\\
\displaystyle \sum_{k=1}^{\left[\frac{n+1}{2}\right]}\binom{n-k+1}{k}a_{i-1,n-k}\mbox{ if }i\geqslant 2.
\end{cases}\end{align*}\end{prop}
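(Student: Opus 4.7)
The plan is to reinterpret $a_{i,n}$ combinatorially using Theorem \ref{theo1.4} together with Proposition \ref{prop3.1}, and then to establish the recursion bijectively by peeling off the positions that carry the largest color.

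Expanding (\ref{Eq12}) in the Hilbert basis and invoking Theorem \ref{theo1.4}(1), $a_{i,n}$ is the number of \emph{surjective} valid $i$-colorings of $J_n$, that is surjective maps $c\colon[n]\to[i]$ such that for every $p<r$ with $c(p)=c(r)$ there exists $q\in {]p,r[}$ with $c(q)<c(p)$. For the base case $i=1$, surjectivity forces the constant map $c\equiv 1$; the validity condition fails as soon as $n\geq 2$ (take $p=1$, $r=2$) and holds trivially for $n=1$, which gives $a_{1,n}=\delta_{n,1}$.

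For $i\geq 2$, I would classify a coloring $c$ by the set $P=c^{-1}(i)$ of positions carrying the maximal color and by its size $k=|P|\geq 1$. The key observation is that $P$ contains no two consecutive integers: two adjacent positions both colored $i$ would violate validity, since ${]p,p+1[}=\emptyset$. Conversely, once $P$ is pairwise non-adjacent, between any two elements of $P$ at least one intermediate position lies outside $P$ and therefore carries a color $<i$, so the validity condition for pairs contained in $P$ is automatic. Hence the admissible sets $P$ are exactly the size-$k$ subsets of $[n]$ with no two consecutive elements, which forces $k\leq\lfloor(n+1)/2\rfloor$; their number is the classical $\binom{n-k+1}{k}$, via the shift bijection $\{p_1<\cdots<p_k\}\mapsto\{p_1,p_2-1,\dots,p_k-(k-1)\}$ with $k$-element subsets of $[n-k+1]$.

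It then remains to check that restricting $c$ to $[n]\setminus P$ and relabelling through the unique increasing bijection $\psi\colon[n]\setminus P\to[n-k]$ produces a surjective valid $(i-1)$-coloring $c'$ of $J_{n-k}$, and that the assignment $c\mapsto(P,c')$ is a bijection whose inverse sets $c\equiv i$ on $P$ and $c=c'\circ\psi$ elsewhere. Surjectivity transfers in both directions. For validity of $c'$, any witness $q$ required for a pair $p<r$ in $[n]\setminus P$ with $c(p)=c(r)<i$ must satisfy $c(q)<c(p)<i$ and hence lies in $[n]\setminus P$, transporting to a valid witness for $c'$; conversely, a $c'$-witness lifts back through $\psi^{-1}$, while pairs in $P$ are handled by the non-adjacency argument above. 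Summing $\binom{n-k+1}{k}\,a_{i-1,n-k}$ over $k=1,\dots,\lfloor(n+1)/2\rfloor$ yields the stated recursion.

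The main obstacle will be the equivalence-of-conditions check in the bijection: one must confirm that the single validity requirement on $c$ decomposes cleanly as the non-adjacency of $P$ together with the validity of $c'$, with no residual constraint arising from pairs that straddle $P$. This is precisely where it is crucial that $i$ is the \emph{maximum} color in play, since any witness strictly between two positions outside $P$ is automatically forced to avoid color $i$.
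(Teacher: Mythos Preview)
Your argument is correct and follows essentially the same route as the paper: both proofs interpret $a_{i,n}$ as the number of surjective valid $i$-colorings of $J_n$, peel off the fibre $P=c^{-1}(i)$ of the top color, observe that the validity condition forces $P$ to be a subset of $[n]$ with no two consecutive elements (counted by $\binom{n-k+1}{k}$), and identify what remains with a surjective valid $(i-1)$-coloring of $J_{n-k}$. Your write-up is more explicit about why the bijection is well defined in both directions, but the underlying decomposition is identical to the paper's.
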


\begin{proof}
Let $b_{k,n}$ be the number of parts $I$ of $[n]$ with $k$ elements such that for all $i,j\in I$, $|i-j|\neq 1$. This condition will be called the $b$-condition. Then
\begin{align*}
b_{k,l}&=|\{(i_1,\ldots,i_k)\in \N^k, 1\leqslant i_1<i_1+1<i_2<\ldots<i_{k-1}+1<i_k\leqslant n\}|\\
&=|\{(j_1,\ldots,j_k)\in \N^k, 1\leqslant j_1<j_2<\ldots<j_k\leqslant n-k+1\}|\\
&=\binom{n-k+1}{k},
\end{align*}
with $j_p=i_p-p+1$ for any $p$. Note that this coefficient is zero if $k>\left[\dfrac{n+1}{2}\right]$. If $i=1$, then $a_{i,n}=1$ if $n=1$, and $0$ otherwise. 
If $i\geqslant 2$, for any surjective map $c:[n]\longrightarrow [i]$,
\begin{itemize}
\item $c^{-1}(i)$ is a nonempty part $I$ of $[n]$, satisfying the $b$-condition.
\item Identifying $(J_n)_{\mid [n]\setminus I}$ with $J_{n-k}$, where $k=|I|$, the restriction of $c$ to this noncrossing partitions is an element of $PV(J_{n-k})$, of maximum $i-1$.
\end{itemize}
Hence, $\displaystyle a_{i,n}=\sum_{k=1}^n b_{k,n} a_{i-1,n-k}=\sum_{k=1}^{\left[\frac{n+1}{2}\right]}\binom{n-k+1}{k}a_{i-1,n-k}$.
\end{proof}

\begin{prop}
Let $i,n\geq 1$. If $i>n$, then $a_{i,n}=0$. If $n\geq 2^i$, then $a_{i,n}=0$.
\end{prop}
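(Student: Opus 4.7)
Both assertions are upper bounds on the support of the coefficients $a_{i,n}$, so the natural strategy is induction on $i$, using the recursion of Proposition~\ref{prop3.8} to reduce each assertion to one about $a_{i-1,\cdot}$.

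For the first assertion, $i>n\Rightarrow a_{i,n}=0$, one can either invoke the combinatorial meaning of $a_{i,n}$ (it counts surjections $[n]\twoheadrightarrow[i]$ of a certain type, and no such surjection exists when $i>n$), or proceed by induction. For the induction, the base $i=1$ is immediate from $a_{1,n}=\delta_{n,1}$. For the step, assume $a_{i-1,m}=0$ whenever $m<i-1$; then in
\[
a_{i,n}=\sum_{k=1}^{[(n+1)/2]}\binom{n-k+1}{k}a_{i-1,n-k},
\]
a nonzero term forces $n-k\geq i-1$, i.e.\ $k\leq n-i+1$; combined with $k\geq 1$ this requires $n\geq i$, so if $n<i$ the sum is empty.

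For the second, sharper assertion $n\geq 2^i\Rightarrow a_{i,n}=0$, the base case $i=1$ is again immediate since $a_{1,n}=\delta_{n,1}=0$ for $n\geq 2=2^1$. For the inductive step, assume that $a_{i-1,m}=0$ whenever $m\geq 2^{i-1}$, equivalently that $a_{i-1,n-k}\neq 0$ forces $n-k\leq 2^{i-1}-1$. In the recursion the summation index is restricted by $k\leq [(n+1)/2]$, which in both parities yields
\[
n-k\;\geq\;n-\Bigl[\tfrac{n+1}{2}\Bigr]\;\geq\;\tfrac{n-1}{2}.
\]
Combining the two constraints, a nonzero term requires $\tfrac{n-1}{2}\leq 2^{i-1}-1$, i.e.\ $n\leq 2^{i}-1$. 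Hence if $n\geq 2^{i}$ every term of the sum vanishes and $a_{i,n}=0$.

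No real obstacle is anticipated: both bounds follow mechanically once one reads off, from the recursion, the smallest value the index $n-k$ can take in the support. The only point requiring a touch of care is the parity analysis of $\lfloor(n+1)/2\rfloor$ used to obtain the clean inequality $n-k\geq (n-1)/2$, which is what makes the factor $2^{i}$ (as opposed to something weaker) come out exactly.
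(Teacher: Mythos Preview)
Your argument is correct, but for the second assertion it follows a genuinely different route from the paper. The paper introduces the generating series $\calP(X,Y)=\sum_{k\geq 0}P_k(X)Y^k$, rewrites the recursion~(\ref{Eq11}) as the functional equation $\calP(X+1,Y)=\calP(X,Y)+Y\calP(X,Y)^2$, and deduces by induction on $N$ that $\calP(N,Y)$ is a polynomial in $Y$ of degree $2^N-1$; hence $P_n(N)=0$ for $n\geq 2^N$, and then the nonnegativity of the $a_{i,n}$ in $P_n(N)=\sum_{i\leq N}a_{i,n}\binom{N}{i}$ forces each $a_{i,n}=0$ for $i\leq N$. You instead run a direct induction on $i$ through the recursion of Proposition~\ref{prop3.8}, using only that the summation range $1\leq k\leq\lfloor(n+1)/2\rfloor$ forces $n-k\geq\lfloor n/2\rfloor$, which is $\geq 2^{i-1}$ when $n\geq 2^i$ (indeed, $2^i$ is even so the odd case gives $n\geq 2^i+1$). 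Your approach is more elementary---it needs neither generating functions nor the positivity of the $a_{i,n}$---while the paper's approach yields en route the stronger vanishing $P_n(N)=0$ for $n\geq 2^N$, essentially the forward direction of the next proposition in the text.
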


\begin{proof}
The combinatorial interpretation of $a_{i,n}$ (as particular surjective maps from $[n]$ to $[i]$) implies that if $i>n$, $a_{i,n}=0$. Let us consider
\[\calP(X,Y)=\sum_{k=0}^\infty P_k(X)Y^k\in \mathbb{Q}[X][[Y]].\]
Then (\ref{Eq11}) implies that
\[\calP(X+1,Y)=\calP(X,Y)+Y\calP(X,Y)^2.\]
Moreover, $\calP(0,Y)=1$. An easy induction then proves that $\calP(N,Y)$ is a polynomial of $\mathbb{Q}[Y]$ of degree $2^N-1$ for any $N\in \N$. 
Hence, $P_n(N)=0$ if $n\geq 2^N$. This gives, if $n\geq 2^N$,
\[P_n(N)=\sum_{i=1}^\infty a_{i,n}H_i(N)=\sum_{i=0}^N a_{i,n}\binom{N}{i}+0=0.\]
As the coefficients $a_{i,n}$ belong to $\N$, we obtain that $a_{i,n}=0$ if $i\leq N$ and $n\geq 2^N$. In other words, if $n\geq 2^i$, then $a_{i,n}=0$. 
\end{proof}

\begin{example} The following array contains the first values of $a_{i,n}$:
\[\begin{array}{|c||c|c|c|c|c|c|c|c|c|c|}
\hline i\setminus n&1&2&3&4&5&6&7&8&9&10\\
\hline \hline 1&1& 0& 0& 0& 0& 0& 0& 0& 0& 0\\
\hline 2&0& 2& 1& 0& 0& 0& 0& 0& 0& 0\\
\hline 3&0& 0& 6& 10& 8& 4& 1& 0& 0& 0\\
\hline 4&0& 0& 0& 24& 86& 172& 254& 302& 298& 244\\
\hline 5&0& 0& 0& 0& 120& 756& 2734& 7484& 17164& 34612\\
\hline 6&0& 0& 0& 0& 0& 720& 7092& 40148& 172168& 621348\\
\hline 7&0& 0& 0& 0& 0& 0& 5040& 71856& 585108& 3589360\\
\hline 8&0& 0& 0& 0& 0& 0& 0& 40320& 787824& 8720136\\
\hline 9&0& 0& 0& 0& 0& 0& 0& 0& 362880& 9329760\\
\hline 10&0& 0& 0& 0& 0& 0& 0& 0& 0& 3628800\\
\hline \end{array}\]\end{example}

It seems that these coefficients are not so well known. For example, no entry of the OEIS \cite{Sloane} contains the term $172168=a_{6,9}$. 
However, we can obtain explicit formulas for some of these terms, involving multiple nested harmonic sums.

\begin{notation}
if $n,k_1,\ldots,k_p\geq 1$, we put $\displaystyle \zeta_n(k_1,\ldots,k_p)=\sum_{1\leq n_1<\ldots<n_p\leq n} \frac{1}{n_1^{k_1}\ldots n_p^{k_p}}$.
\end{notation}

\begin{cor}\label{cor3.10}
\begin{align*}
&\forall n\geq 1,& a_{n,n}&=n!,\\
&\forall n\geq 2,&a_{n-1,n}&=n!\left(\frac{n+1}{2}-\zeta_n(1)\right),\\
&\forall n\geq 3,&a_{n-2,n}&=n!\left(\zeta_n(1,1)-\frac{n}{2}\zeta_n(1)+\frac{(3n-2)(n^2+n+6)}{24n}\right).
\end{align*}
The sequence $(a_{n-1,n})_{n\geqslant 2}$ is  Entry A31853 in the OEIS \cite{Sloane}.
\end{cor}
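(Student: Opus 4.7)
The plan is to iterate the recursion of Proposition \ref{prop3.8} in the regime where $i$ is close to $n$. Since $a_{i-1,n-k}=0$ whenever $i-1>n-k$, the summation is truncated to $k\leq n-i+1$: only one term survives for $i=n$, two for $i=n-1$, three for $i=n-2$. The three formulas will then be derived in cascade, each reusing the previous one, by solving a one-variable recursion on $n$ and telescoping.

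For $a_{n,n}$ only $k=1$ contributes, giving $a_{n,n}=n\,a_{n-1,n-1}$; an immediate induction from $a_{1,1}=1$ yields $a_{n,n}=n!$. Setting $b_n:=a_{n-1,n}$ and substituting $a_{n-2,n-2}=(n-2)!$, the two-term recursion reads $b_n=n\,b_{n-1}+\binom{n-1}{2}(n-2)!$ with $b_2=0$. Dividing by $n!$ produces the clean telescoping increment
\[\frac{b_n}{n!}-\frac{b_{n-1}}{(n-1)!}=\frac{n-2}{2n}=\frac{1}{2}-\frac{1}{n},\]
which sums directly from $k=3$ to $k=n$, giving $b_n/n!=\frac{n+1}{2}-\zeta_n(1)$. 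The OEIS identification is then a routine numerical comparison.

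For $c_n:=a_{n-2,n}$ three terms survive, and the values $a_{n-3,n-2}=b_{n-2}$, $a_{n-2,n-2}=(n-2)!$ obtained above turn the recursion into
\[c_n = n\,c_{n-1} + \binom{n-1}{2}b_{n-2} + \binom{n-2}{3}(n-3)!\qquad (n\geq 5),\]
with $c_3=c_4=0$ checked directly. Inserting the closed form for $b_{n-2}$ and dividing by $n!$, the telescoping increment splits into a purely rational piece, which collapses by partial fractions into the claimed polynomial $(3n-2)(n^2+n+6)/(24n)$ together with a contribution to $-\tfrac{n}{2}\zeta_n(1)$, and a nested-harmonic piece $\sum_{k=3}^n \frac{k-2}{2k}\,\zeta_{k-2}(1)$. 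The main obstacle is this last sum: my plan is to expand $\zeta_{k-2}(1)=\sum_{j=1}^{k-2}1/j$, swap the order of summation, evaluate the inner sum $\sum_{k=j+2}^n \frac{k-2}{2k}$ in closed form as $\frac{n-j-1}{2}-\bigl(\zeta_n(1)-\zeta_{j+1}(1)\bigr)$, and reassemble the resulting double sum into the combination $\zeta_n(1,1)-\tfrac{n}{2}\zeta_n(1)$ required by the target, using the standard exchange identity for nested harmonic sums. Matching the rational and harmonic pieces against the claimed formula then completes the induction.
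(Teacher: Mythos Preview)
Your proposal is correct and follows essentially the same route as the paper: both truncate the recursion of Proposition~\ref{prop3.8} to one, two, or three terms, divide by $n!$, and telescope, reducing the third formula to the sum $\sum_{k}\bigl(\tfrac12-\tfrac1k\bigr)\bigl(\tfrac{k-1}{2}-\zeta_{k-2}(1)\bigr)+\sum_k\tfrac{(k-3)(k-4)}{6k(k-1)}$, which the paper then dismisses as ``tedious manipulations of sums.'' Your sketch of those manipulations (swapping the order of summation in the nested-harmonic piece and reassembling into $\zeta_n(1,1)-\tfrac{n}{2}\zeta_n(1)$) is a sound plan; just note that the telescoping naturally starts at $k=4$ rather than $k=3$, though the $k=3$ term vanishes anyway so this is harmless.
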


\begin{proof} 
For any $n\geqslant 2$, by Proposition \ref{prop3.8},
\[a_{n,n}=\binom{n}{1}a_{n-1,n-1}+0=na_{n-1,n-1},\]
as, if $k\geqslant 2$, $a_{n-1,n-k}=0$. Consequently, $a_{n,n}=n!$. \\

For any $n\geqslant 3$, by Proposition \ref{prop3.8},
\begin{align*}
a_{n-1,n}&=\sum_{k=1}^{\left[\frac{n+1}{2}\right]} \binom{n-k+1}{k} a_{n-2,n-k}\\
&=\binom{n}{1}a_{n-2,n-1}+\binom{n-1}{2}a_{n-2,n-2}+0\\
&=n a_{n-2,n-1}+\frac{(n-1)!(n-2)}{2}.
\end{align*}
Therefore, if $n\geqslant 3$,
\[\frac{a_{n-1,n}}{n!}=\frac{a_{n-2,n-1}}{(n-1)!}+\frac{1}{2}-\frac{1}{n}.\]
As $a_{1,2}=0$, the result follows by  directly.\\

If $n\geqslant 4$, by Proposition \ref{prop3.8},
\begin{align*}
\frac{a_{n-2,n}}{n!}&=\frac{1}{n!}\left(\binom{n}{1}a_{n-3,n-1}+\binom{n-1}{2}a_{n-3,n-2}+\binom{n-2}{3}a_{n-3,n-3}\right)\\
&=\frac{a_{n-3,n-1}}{(n-1)!}+\frac{n-2}{2n}\frac{a_{n-3,n-2}}{(n-2)!}+\frac{(n-3)(n-4)}{6n(n-1)}\frac{a_{n-3,n-3}}{(n-3)!}.
\end{align*}
As $a_{1,3}=0$, and with the preceding formulas, we deduce that
\[\frac{a_{n-2,n}}{n!}=\sum_{k=4}^n \left(\frac{1}{2}-\frac{1}{k}\right)\left(\frac{k-1}{2}-\zeta_{k-2}(1)\right)+\sum_{k=4}^n \frac{(k-3)(k-4)}{6k(k-1)}.\]
The result then follows by tedious manipulations of sums. 
\end{proof}

Let us now give a way to compute the coefficients of $P_n(X)$ in the basis of monomials $(X^k)_{k\in \N}$.

\begin{prop} \label{prop3.11}
Let $M=(M_{k,l})_{k,l\geq 1}$ be the infinite matrix defined by 
\[M_{k,l}=\begin{cases}
\displaystyle \binom{l}{k-l}\mbox{ if }k\geq l,\\
0\mbox{ if }k<l.
\end{cases}\]
Then
\[\left(\begin{array}{c}
P_0(X)\\
P_1(X)\\
P_2(X)\\
\vdots
\end{array}\right)=e^{X\ln(M)}\left(\begin{array}{c}
1\\
0\\
0\\
\vdots
\end{array}\right).\]\end{prop}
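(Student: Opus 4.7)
The plan is to reduce Proposition \ref{prop3.11} to a linear recurrence for the column vector $V(X) = (P_0(X), P_1(X), P_2(X), \ldots)^{\mathrm{T}}$, namely
\[V(X+1) = M V(X),\]
and then to conclude using the unipotent structure of $M$. Under the natural matching of indices, in which the $k$-th entry of $V$ (for $k \geq 1$) is $V_k(X) = P_{k-1}(X)$ and $V(0) = (1, 0, 0, \ldots)^{\mathrm{T}}$, this matrix identity is equivalent to the scalar recurrence
\[P_n(X+1) = \sum_{l = 0}^{n} \binom{l+1}{n-l} P_l(X), \qquad n \geq 0.\]

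First, I would establish this scalar recurrence through the combinatorial interpretation of Proposition \ref{prop3.1}: for any $N \in \N$, $P_n(N)$ is the number of valid $N$-colorations of the partition $J_n$, whose blocks are the singletons $\{1\}, \ldots, \{n\}$ with no nesting. Given a valid $(N+1)$-coloration $f$ of $J_n$, set $T = f^{-1}(N+1) \subseteq [n]$: by the $b$-condition, any integer strictly between two elements of $T$ must carry a color strictly smaller than $N+1$, so $T$ cannot contain two consecutive integers of $[n]$. Conversely, any such $T$ together with a valid $N$-coloration of the restriction of $J_n$ to $[n] \setminus T$, canonically isomorphic to $J_{n-|T|}$, produces a valid $(N+1)$-coloration of $J_n$, and this correspondence is bijective. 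Since the number of $k$-subsets of $[n]$ with no two consecutive elements is $\binom{n-k+1}{k}$ (as already used in the proof of Proposition \ref{prop3.8}), one gets
\[P_n(N+1) = \sum_{k=0}^n \binom{n-k+1}{k} P_{n-k}(N),\]
valid for every $N \in \N$, hence as an identity of polynomials; the substitution $l = n-k$ yields the announced recurrence. Alternatively, the same identity can be read off Proposition \ref{prop3.8} by expanding $P_n(X+1)$ in the Hilbert basis, using $H_i(X+1) = H_i(X) + H_{i-1}(X)$.

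Second, I would pass from this scalar recurrence to the matrix exponential. The matrix $M$ is lower triangular with $M_{k,k} = \binom{k}{0} = 1$, so $M - I$ is strictly lower triangular; concretely, for every fixed row index $k$, the $k$-th row of $(M-I)^j$ vanishes as soon as $j \geq k$. Hence the formal series $\ln M = \sum_{j \geq 1} \frac{(-1)^{j-1}}{j} (M - I)^j$ reduces row by row to a finite sum, and the $k$-th row of $e^{X \ln M} = \sum_{j \geq 0} \frac{X^j (\ln M)^j}{j!}$ is a polynomial in $X$ of degree at most $k-1$; in particular $e^{\ln M} = M$. Setting $W(X) := e^{X \ln M} (1, 0, 0, \ldots)^{\mathrm{T}}$, one obtains a vector of polynomials in $X$ with $W(0) = V(0)$, and, since $X \ln M$ and $\ln M$ commute,
\[W(X+1) = e^{\ln M} \cdot e^{X \ln M} (1, 0, 0, \ldots)^{\mathrm{T}} = M \, W(X).\]
An immediate induction on $N$ gives $W(N) = M^N V(0) = V(N)$ for every $N \in \N$, and since $V$ and $W$ have polynomial entries agreeing on all nonnegative integers, they coincide as polynomials, which is the claim.

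The main obstacle is the first step: deriving the \emph{linear} recurrence directly from the \emph{quadratic} one (\ref{Eq11}) would amount to a nontrivial identity linearising $\sum_{i=1}^n P_{i-1}(X) P_{n-i}(X)$ into $\sum_k \binom{n-k+1}{k} P_{n-k}(X)$, which is not apparent algebraically. The combinatorial interpretation (or equivalently Proposition \ref{prop3.8}) is what makes this linearisation transparent; once it is in hand, the matrix exponential step is routine thanks to the strict lower-triangularity of $M - I$.
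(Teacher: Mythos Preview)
Your proof is correct and complete. Both you and the paper ultimately rest on the same combinatorial input, namely the recursion of Proposition~\ref{prop3.8} (or its direct combinatorial equivalent), but you package the conclusion differently. The paper works coefficient by coefficient: it shows that the Hilbert-basis coefficient vectors satisfy $A^{(i)}=(M-I)^iA^{(0)}$, then performs an explicit change of basis to monomials via the generating-function identity $\partial_X^i(1+Y)^X|_{X=0}=\ln(1+Y)^i$, obtaining $B^{(i)}=\frac{1}{i!}(\ln M)^iB^{(0)}$, and finally sums over $i$. You instead keep the polynomials $P_n(X)$ intact, prove the one-step shift identity $V(X+1)=MV(X)$, and recognise its unique polynomial solution with the given initial value as $e^{X\ln M}e_1$, exploiting the strict lower-triangularity of $M-I$ to justify the matrix logarithm and exponential row by row. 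Your route is shorter and avoids the generating-function computation entirely; the paper's route has the side benefit of isolating the monomial coefficients $b_{i,n}$ explicitly as entries of $\frac{1}{i!}(\ln M)^i$ applied to the first basis vector, which is what feeds into Remark~\ref{rk3.3} and the link with the infinitesimal generator of $X(1+X)$.
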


\begin{proof}
For any $n\in \N$, we put
\[P_n(X)=\sum_{k=0}^\infty b_{i,n}X^i,\]
and we consider the infinite vectors  $B^{(i)}=(b_{i,n-1})_{n\geq 1}$. Then $B^{(0)}=(\delta_{1,n})_{n\geq 1}$, and for any $n\geq 1$, From Proposition \ref{prop3.8}, if $i\geq 2$, 
\begin{align*}
a_{i,n}&=\sum_{k=1}^{n-1}\binom{n-k+1}{k}a_{i-1,n-k}=\sum_{k=1}^{n-1}\binom{k+1}{n-k}a_{i-1,k}=\sum_{k=0}^{n} \binom{k+1}{n-k}a_{i-1,k}-a_{i-1,n}.
\end{align*}
If $A^{(i)}$ is the infinite vector $(a_{i,n-1})_{n\geq 1}$, we obtain that if $n\geq 2$, then $A^{(i)}=(M-I) A^{(i-1)}$, which is still true if $i=1$. 
Therefore, for any $i\geq 0$, $A^{(i)}=(M-I)^i A^{(0)}$, where $A^{(0)}=(\delta_{1,n})_{n\geq 1}$. \\

Let us now compute the coefficient of $X^i$ in $H_k(X)$ for any $k,n$. We consider the formal series
\[f(X,Y)=\sum_{k\geq 0} H_k(X)Y^k=\sum_{k\geq 0} \binom{X}{k} Y^k=(1+Y)^X \in \mathbb{Q}[[X,Y]].\]
Therefore, 
\begin{align*}
\sum_{k\geq 0} \frac{d^iH_k}{dX^i}(0)Y^k&=\frac{\partial^i f}{\partial X^i}(0,Y)=\left(\ln(1+Y)^i(1+Y)^X\right)_{\mid X=0}=\ln(1+Y)^i.
\end{align*}
Let us put $\displaystyle \ln(1+Y)^i=\sum_k \gamma_{i,k}Y^k$. As for any $n\geq 1$, $\displaystyle P_n=\sum_{k=1}^\infty a_{k,n}H_k(X)$, the coefficient of $X^i$ in $P_n$ is
\begin{align*}
b_{i,n}&=\sum_{k=1}^\infty \frac{1}{i!} \gamma_{i,k} a_{k,n}.
\end{align*}
In other words,
\begin{align*}
B^{(i)}&=\frac{1}{i!}\sum_{k=1}^\infty \gamma_{i,k} A^{(k)}=\frac{1}{i!}\left(\sum_{k=1}^\infty \gamma_{i,k}(M-I)^k\right) B^{(0)}=\frac{1}{i!}\ln(M)^i B^{(0)}. 
\end{align*}
We then obtain
\begin{align*}
\left(\begin{array}{c}
P_0(X)\\
P_1(X)\\
\vdots
\end{array}\right)&=\sum_{i=0}^\infty B^{(i)}X^i=\left(\sum_{i=0}^\infty \frac{X^i}{i!}\ln(M)^iX^i\right) B^{(0)}=e^{X\ln(M)}B^{(0)}. \qedhere
\end{align*}\end{proof}

\begin{remark}\label{rk3.3}
$M$ is the Riordan array of $(1+X,X(1+X))$, see Entries A005119 and A030528 of the OEIS. 
Consequently, the first column of $\ln(M)$, that is to say $B^{(1)}$, gives the coefficients of the infinitesimal generator of $X(1+X)$, up to signs and multiplications by factorials, giving Entry A005119 of the OEIS. 
\end{remark}

\begin{example}\begin{align*}
P_1&=X,\\
P_2&=X^2-X,\\
P_3&=X^3 - \frac{5X^2}{2} + \frac{3X}{2},\\
P_4&=X^4 - \frac{13X^3}{3} + 6X^2 - \frac{8X}{3},\\
P_5&=X^5 - \frac{77X^4}{12} + \frac{89X^3}{6} - \frac{175X^2}{12} +\frac{ 31X}{6},\\
P_6&=X^6 - \frac{87X^5}{10} + \frac{175X^4}{6} -\frac{ 281X^3}{6} + \frac{215X^2}{6} - \frac{157X}{15},\\
P_7&=X^7 - \frac{223X^6}{20} + \frac{1501X^5}{30} - 115X^4 + \frac{851X^3}{6} -\frac{ 1767X^2}{20} +\frac{ 649X}{30}.
\end{align*}\end{example}

\begin{remark}
It could be conjectured seeing these examples that $P_n$ is a polynomial with alternating signs, as are the chromatic polynomials for graphs. This is true till $n=28$ but fails for $n=29$:
\begin{align*}
P_{29}=X^{29} -\frac{6897956948587}{80313433200}X^{28}+\cdots- \frac{14277306976985617719653}{2679791554440}X^2-\frac{37449570182565026}{37182145}X.
\end{align*}\end{remark}

\begin{prop}
Let $k,n\geq 1$. Then $P_k(n)=0$ if, and only if, $k>2^n-1$.
\end{prop}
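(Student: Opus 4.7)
The plan is to prove the two implications of the equivalence separately. The implication $k > 2^n - 1 \Rightarrow P_k(n) = 0$ is essentially already contained in the proof of the preceding proposition: the generating series $\calP(X,Y) = \sum_{k\geq 0} P_k(X) Y^k$ satisfies the functional equation $\calP(X+1,Y) = \calP(X,Y) + Y \calP(X,Y)^2$ with $\calP(0,Y) = 1$, and an immediate induction on $N \in \N$ shows that $\calP(N,Y)$ is a polynomial in $Y$ of degree exactly $2^N - 1$. Hence $P_k(n) = 0$ whenever $k \geq 2^n$, i.e., whenever $k > 2^n - 1$.

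For the converse implication $k \leq 2^n - 1 \Rightarrow P_k(n) \neq 0$, I would invoke the combinatorial interpretation of Proposition \ref{prop3.1}. Since all blocks of $J_k$ are singletons and there is no nesting, $P_k(n) = \phi_\ncp(J_k)(n)$ equals the number of maps $f:[k] \longrightarrow [n]$ such that for any $1 \leq i < j \leq k$ with $f(i) = f(j)$, there exists $\ell$ with $i < \ell < j$ and $f(\ell) < f(i)$. Call such a map a \emph{valid} sequence of length $k$ in $[n]$. Because $P_k(n)$ is a non-negative integer, it suffices to exhibit at least one valid sequence of length $k$ in $[n]$ for every $1 \leq k \leq 2^n - 1$.

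To do this, I would build a Zimin-style sequence $Z_n$ of length exactly $2^n - 1$ with values in $[n]$, defined recursively by $Z_1 = (1)$ and $Z_n = (Z_{n-1}^+, 1, Z_{n-1}^+)$, where $Z_{n-1}^+$ denotes $Z_{n-1}$ with every entry incremented by $1$. Its length is $2(2^{n-1}-1)+1 = 2^n - 1$ by induction. Validity of $Z_n$ is verified by a short induction on $n$: the value $1$ occurs only once in $Z_n$ (at its central position), so the condition on equal pairs only needs to be checked for values $v \geq 2$; any two equal occurrences of such a $v$ either both lie in the same copy of $Z_{n-1}^+$, in which case the inductive hypothesis supplies a strictly smaller entry between them, or lie on opposite sides of the central $1$, which is itself strictly smaller than $v$ and between them. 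A prefix of a valid sequence is plainly valid, so the prefix of $Z_n$ of length $k$ is a valid sequence of length $k$ for each $1 \leq k \leq 2^n - 1$, giving $P_k(n) \geq 1$. The only delicate point is the recursive validity check for $Z_n$; it is essentially standard and presents no genuine obstacle.
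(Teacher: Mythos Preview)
Your proof is correct, and your Zimin-style sequence $Z_n$ is in fact \emph{identical} to the paper's sequence $D_n$: the paper defines $D_1=(1)$ and $D_n=(n,d_{n-1,1},n,\ldots,n,d_{n-1,2^{n-1}-1},n)$, which unravels to exactly $(Z_{n-1}^+,1,Z_{n-1}^+)$. Both then restrict to a prefix of length $k$.

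The only genuine difference lies in the vanishing direction. You invoke the degree bound $\deg_Y \calP(N,Y)=2^N-1$ established in the proof of the preceding proposition, which is entirely legitimate. The paper instead gives a self-contained combinatorial argument: for any valid $n$-coloration $f$ of $J_k$, one shows by induction that $|f^{-1}(i)|\leq 2^{i-1}$ (since two elements of $f^{-1}(i)$ must be separated by an element of $f^{-1}([i-1])$), whence $k\leq 2^n-1$. Your route is shorter given what is already in hand; the paper's route has the virtue of not depending on the generating-series computation.
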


\begin{proof}
$\Longrightarrow$. Let us assume that $k\leq 2^n-1$ and let us prove that $P_k(n)\neq 0$. 
We define a sequence $D_n=(d_{n,1},\ldots,d_{n,2^n-1})$ of length $2^n-1$ for any $n\geq 1$ by the following process: $D_1=(1)$ and for any $n\geq 2$,
\[D_n=(n,d_{n-1,1},n,d_{n-1,2},n,\ldots,n,d_{n-1,2^{n-1}-1},n).\]
For example,
\begin{align*}
D_2&=(2,1,2),\\
D_3&=(3,2,3,1,3,2,3),\\
D_4&=(4,3,4,2,4,3,4,1,4,3,4,2,4,3,4).
\end{align*}
In the sequence $D_n$, by construction, if $i<k$ and $d_{n,i}=d_{n,k}$, then there exists $j$ such that $i<j<k$ and $d_{n,j}<d_{n,i}$. So $D_n$ is a $n$-valid coloration of $J_{2^n-1}$.
By restriction to its first $k$ letters, it is a $n$-valid coloration of $J_k$, so $P_k(n)\neq 0$. \\

$\Longleftarrow$. Let us assume that $P_k(n)\neq 0$ and let us prove that $k\leq 2^n-1$. By hypothesis, there exists a valid $n$-coloration $f$ of $J_n$.
Let us prove by induction that $|f^{-1}(i)|\leq 2^{i-1}$ by induction on $i$. If $i=1$, by the $b$ condition, at most one element of $[k]$ is sent by $f$ to $1$, so $|f^{-1}(1)|\leq 1$. 
Let us assume the result at all ranks $<i$. By the $b$-condition, two elements sent by $f$ to $i$ are separated by at least one element of $f^{-1}([i-1])$. By the induction hypothesis,
\[|f^{-1}([i-1])|\leq 2^0+\cdots+2^{i-2}=2^{i-1}-1.\]
So, there can be at most $2^{i-1}$ elements in $f^{-1}(i)$. Finally, 
\[k=\sum_{i=1}^n |f^{-1}(i)|\leq \sum_{i=1}^n 2^{i-1}\leq 2^n-1. \qedhere\]
\end{proof}

\begin{remark}
In other term, the chromatic number of $J_k$, that is to say the minimal $n$ such that $J_k$ has a valid $n$-coloration, is the smallest integer greater than or equal to $\log_2(k+1)$. 
\end{remark}

\section{Other polynomial invariants}

\subsection{Homogeneous polynomial invariants}

The number of blocks of noncrossing partitions induces a graduation of $(\alg,m,\Delta)$ (notice that $\delta$ is not homogeneous for this graduation).
It is connected (that is to say $\alg_0=\K$), but its other homogeneous components are not finite dimensional.
With the help of \cite[Propositions 3.10 and 5.2]{Foissy40}, we can associate a homogeneous morphism $\phi_0: (\alg,m,\Delta)\longrightarrow (\K[X],m,\Delta)$ to the element $\mu \in  \alg_1^*$
such that $\mu(\pi)=1$ for any noncrossing partition $\pi$ with only one block (and $\mu$ takes the value $0$ on any other monomial in noncrossing partitions). 

\begin{defi}
Let $\pi$ be a noncrossing partition with $n$ blocks. We denote by $\ext(\pi)$ the number of linear extensions of $(\pi,\leq_\pi)$, that is to say bijections $f:\pi\longrightarrow [n]$ such that
\begin{align*}
&\forall b,b'\in \pi,&b\leq_\pi b'\Longrightarrow f(b)\leq f(b').
\end{align*}\end{defi}

By construction of the coproduct $\Delta$, for any noncrossing partition $\pi$ with $n$ blocks,
\[\mu^{\otimes n}\circ \Delta^{(n-1)}(\pi)=\ext(\pi).\]
Therefore, the morphism associated to $\mu$ is given by the following:

\begin{prop}
Let $\phi_0: \alg\longrightarrow \K[X]$ be the algebra morphism such that
\begin{align*}
&\forall \pi\in \ncp,&\phi_0(\pi)&=\frac{\ext(\pi)}{|\pi|!}X^{|\pi|}.
\end{align*}
Then $\phi_0$ is a bialgebra morphism from $(\alg,m,\Delta)$ to $(\K[X], m,\Delta)$. Considering the character $\lambda_0=\epsilon_\delta \circ \phi_0$,
\begin{align*}
&\forall \pi\in \ncp,&\lambda_0(\pi)&=\frac{\ext(\pi)}{|\pi|!}.
\end{align*}
Moreover, with the notations of Proposition \ref{prop1.2} and Theorem \ref{theo1.4}, $\phi_0=\phi_{\ncp}\leftsquigarrow\lambda_0$.
\end{prop}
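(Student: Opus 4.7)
The plan is to view $(\alg,m,\Delta)$ as a connected graded bialgebra, with grading by the number of blocks, and invoke the general correspondence between linear forms on $\alg_1$ and homogeneous bialgebra morphisms to $\K[X]$ established in \cite[Propositions 3.10 and 5.2]{Foissy40}. First I would check that $\Delta$ is homogeneous for this grading: if $\pi$ has $n$ blocks and $J$ is an ideal containing $k$ of them, then $\pi_{\mid[n]\setminus J}$ has $n-k$ blocks while $\pi_{\mid_\cdot J}$ is a monomial of total degree $k$. Since $\alg_0=\K\cdot 1$, the bialgebra is connected graded, and the cited result produces a unique homogeneous bialgebra morphism $\phi_0$ extending $\mu$, given by
\[\phi_0(x)=\sum_{k\geq 1}\frac{1}{k!}\mu^{\otimes k}\circ\tdelta^{(k-1)}(x)\,X^k\]
on the augmentation ideal.

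Next I would compute $\mu^{\otimes n}\circ\tdelta^{(n-1)}(\pi)$ for a noncrossing partition $\pi$ with $n$ blocks. Since $\mu$ vanishes outside $\alg_1$, only the summand $k=n$ survives. Iterating the definition of $\Delta$ and using that $\Delta$ is an algebra morphism (so it acts on monomials $\pi_{\mid_\cdot J}$ factor-wise), the terms contributing to $\mu^{\otimes n}\circ\Delta^{(n-1)}$ are precisely those in which every tensor factor is a single one-block noncrossing partition, i.e., those obtained by successively peeling off ideals of size one. By the lemma following Definition 2.5, assigning to each block $b$ its position $f(b)=k$ in the sequence of peelings yields a bijection with the order-preserving bijections $f:\pi\longrightarrow[n]$ with respect to $\leq_\pi$; that is, with the $\ext(\pi)$ linear extensions of $(\pi,\leq_\pi)$. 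Hence
\[\phi_0(\pi)=\frac{\ext(\pi)}{n!}X^n,\]
and $\phi_0$ is a bialgebra morphism by construction.

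Finally, I would identify $\phi_0$ with $\phi_\ncp\leftsquigarrow\lambda_0$ by invoking Theorem \ref{theo1.4} (2). Since $\epsilon_\delta$ on $\K[X]$ is evaluation at $1$, one has $\lambda_0(\pi)=\epsilon_\delta\circ\phi_0(\pi)=\phi_0(\pi)(1)=\ext(\pi)/|\pi|!$, which is precisely the formula asserted. Theorem \ref{theo1.4} (2) states that the map $\phi\longmapsto\phi(-)(1)$ is a bijection between bialgebra morphisms $\alg\to\K[X]$ and characters of $\alg$, with inverse $\lambda\longmapsto\phi_\ncp\leftsquigarrow\lambda$. Both $\phi_0$ and $\phi_\ncp\leftsquigarrow\lambda_0$ are bialgebra morphisms whose image under the inverse bijection is $\lambda_0$, so they coincide.

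The main obstacle is the combinatorial identification in the middle step: one must carefully track the interplay of the iterated coproduct with the connected-component decomposition $\pi_{\mid_\cdot J}$ and the algebra-morphism property of $\Delta$, so as to verify that the surviving terms under $\mu^{\otimes n}$ correspond exactly to linear extensions of the nesting order, not merely to some larger combinatorial family.
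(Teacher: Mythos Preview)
Your proposal is correct and follows essentially the same route as the paper: the argument in the paper is given in the text surrounding the proposition rather than in a separate proof block, but it proceeds exactly as you do --- grade by number of blocks, invoke \cite[Propositions 3.10 and 5.2]{Foissy40} applied to the linear form $\mu$ supported on one-block partitions, identify $\mu^{\otimes n}\circ\Delta^{(n-1)}(\pi)=\ext(\pi)$ via the correspondence between iterated peelings and linear extensions of $\leq_\pi$, and then read off $\lambda_0$ and the relation $\phi_0=\phi_\ncp\leftsquigarrow\lambda_0$ from Theorem \ref{theo1.4}. Your write-up is in fact slightly more detailed than the paper's on the combinatorial identification step.
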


See the Table at the end of the article for examples of $\lambda_0(\pi)$.

\subsection{Invertible characters}

In order to study invertible characters of $(\alg,m,\delta)$, let us introduce a second graduation. Let $\pi=\pi_1\cdot \ldots \cdot \pi_k$ be a monomial in noncrossing partitions. 
The integer $k$ is the length of $\pi$, and is denoted by $\lg(\pi)$. The number of blocks of $\pi$ is
\[|\pi|=|\pi_1|+\cdots+|\pi_k|.\]
Finally, the degree of $\pi$ is
\[\deg(\pi)=|\pi|-\lg(\pi).\]
Note that this belongs to $\N$. 

\begin{lemma}
With this degree, $(\alg,m,\delta)$ is a graded bialgebra. In particular, $\alg_0$ is the subalgebra generated by noncrossing partitions with only one block.
\end{lemma}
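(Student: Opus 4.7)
The plan is to check the two ingredients separately: multiplicativity of the degree for the product $m$, and multiplicativity of the degree for the coproduct $\delta$, then deduce the description of $\alg_0$.

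First, the product is obviously homogeneous: for any two monomials $\pi,\pi'$ in noncrossing partitions, the length and the total number of blocks are both additive under concatenation, so
\[
\deg(\pi\cdot\pi')=(|\pi|+|\pi'|)-(\lg(\pi)+\lg(\pi'))=\deg(\pi)+\deg(\pi').
\]

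For $\delta$, by multiplicativity it is enough to prove the homogeneity on a single noncrossing partition $\pi$ (so $\lg(\pi)=1$, hence $\deg(\pi)=|\pi|-1$). The key observation is the following bookkeeping. Pick $\sim\in\eq_c[\pi]$ and let $k$ denote the number of $\sim$-classes. Then:
\begin{itemize}
\item $\pi/\sim$ is a single noncrossing partition whose blocks are the $\sim$-classes, so $\lg(\pi/\sim)=1$ and $|\pi/\sim|=k$, giving $\deg(\pi/\sim)=k-1$;
\item $\pi\mid_\cdot\sim$ is a product of $k$ factors (one per class), and each block of $\pi$ appears in exactly one factor, so $\lg(\pi\mid_\cdot\sim)=k$ and $|\pi\mid_\cdot\sim|=|\pi|$, giving $\deg(\pi\mid_\cdot\sim)=|\pi|-k$.
\end{itemize}
Hence $\deg(\pi/\sim)+\deg(\pi\mid_\cdot\sim)=(k-1)+(|\pi|-k)=|\pi|-1=\deg(\pi)$, so every term of $\delta(\pi)$ has total degree $\deg(\pi)$. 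Extending to arbitrary monomials by multiplicativity (which works because the extension of $\delta$ in Proposition~\ref{prop2.10} is precisely componentwise), one obtains that $\delta$ is homogeneous. The counit $\epsilon_\delta$ is clearly homogeneous of degree $0$ (it only detects monomials of length equal to the number of blocks, i.e.\ of degree $0$).

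Finally, a monomial $\pi_1\cdot\ldots\cdot\pi_k$ satisfies $\deg(\pi_1\cdot\ldots\cdot\pi_k)=\sum_{i}(|\pi_i|-1)=0$ if and only if every factor satisfies $|\pi_i|=1$, since $|\pi_i|\geq 1$. Thus $\alg_0$ is exactly the subalgebra generated by noncrossing partitions with only one block. There is essentially no obstacle here: the only point to be careful about is the identification $\lg(\pi/\sim)=1$ versus $\lg(\pi\mid_\cdot\sim)=k$, which is what makes the two degree contributions $k-1$ and $|\pi|-k$ add up to $|\pi|-1$.
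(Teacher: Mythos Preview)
Your proof is correct and follows essentially the same approach as the paper: both verify additivity of $|\cdot|$ and $\lg$ under the product, then check that for each term $\pi/\sim\otimes\pi\mid_\cdot\sim$ of $\delta$ the degrees add up to $\deg(\pi)$ via the same bookkeeping ($|\pi/\sim|=\lg(\pi\mid_\cdot\sim)=k$). The only cosmetic difference is that you reduce first to a single noncrossing partition using multiplicativity of $\delta$, whereas the paper carries a general monomial $\pi_1\cdots\pi_k$ through the computation directly.
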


\begin{proof} Let $\pi,\pi'$ be monomials in noncrossing partitions. Then
\begin{align*}
|\pi\cdot \pi'|&=|\pi|+|\pi'|,&\lg(\pi\cdot \pi')&=\lg(\pi)+\lg(\pi').
\end{align*}
 we obtain that $\deg(\pi\cdot \pi')=\deg(\pi)+\deg(\pi')$: the product is homogeneous.  Let $\pi=\pi_1\cdots \pi_k$ be a monomial in noncrossing partitions, and let $\sim_i\in \eq_c[\pi_i]$ for any $i$. 
Then, by definition of $\eq_c[\pi_i]$,
 \begin{align*}
 |\pi_1\mid_\cdot \sim_1|+\cdots+|\pi_k\mid_\cdot \sim_k|&=|\pi_1|+\cdots+|\pi_k|=|\pi|,\\
 \lg(\pi_1\mid_\cdot \sim_1)+\cdots+\lg(\pi_k\mid_\cdot \sim_k)&=\cl(\sim_1)+\cdots+\cl(\sim_k),\\
|\pi_1/\sim_1|+\cdots+|\pi_k/\sim_k|&=\cl(\sim_1)+\cdots+\cl(\sim_k),\\
\lg(\pi_1/\sim_1)+\cdots+\lg(\pi_k/\sim_k)&=k,
 \end{align*}
 where $\cl(\sim_i)$ is the number of classes of $\sim_i$. Combining these equalities, we obtain that
 \[\deg(\pi_1/\sim_1\cdots \pi_k/\sim_k)+\deg(\pi_1\mid_\cdot \sim_1\cdots \pi_k\mid_\cdot \sim_k)=|\pi|-k=\deg(\pi).\]
 Therefore, the coproduct $\delta$ is homogeneous. 
\end{proof}

\begin{prop}\label{prop3.16}
Let $\lambda$ be a character of $(\alg,m)$. It is invertible for the convolution  product $\star $ associated to $\delta$ if, and only if, for any noncrossing partition $\pi$ with only one block, $\lambda(\pi)\neq 0$. 
\end{prop}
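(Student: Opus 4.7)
The plan is to prove necessity by direct evaluation on one-block partitions and sufficiency by an inductive construction using the $\deg$-grading of $(\alg, m, \delta)$.

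For necessity, if $\pi$ is a noncrossing partition with exactly one block, then its index set is a singleton, so $\eq_c[\pi]$ reduces to the trivial equivalence and $\delta(\pi) = \pi \otimes \pi$. If $\mu$ is a $\star$-inverse of $\lambda$, evaluating $\lambda \star \mu = \epsilon_\delta$ on $\pi$ yields $\lambda(\pi)\mu(\pi) = \epsilon_\delta(\pi) = 1$, forcing $\lambda(\pi) \neq 0$.

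For sufficiency, assume $\lambda(\pi) \neq 0$ on every one-block noncrossing partition, and write $L_N$ for the one-block partition on $N$ elements. I will construct a right $\star$-inverse $\mu$ of $\lambda$ by induction on the degree, starting with $\mu(L_k) = 1/\lambda(L_k)$ for every $k \geq 1$. The key observation, which is the main technical hurdle, is that among the terms $\pi/\sim \otimes \pi \mid_\cdot \sim$ contributing to $\delta(\pi)$, exactly one has second factor of the same degree as $\pi$: the term coming from the indiscrete equivalence $\sim_0$ on the index set of $\pi$, which reads $L_N \otimes \pi$ with $N$ the total number of elements underlying $\pi$. For any other $\sim \in \eq_c[\pi]$, $\cl(\sim) \geq 2$, hence $\deg(\pi \mid_\cdot \sim) = |\pi| - \cl(\sim) \leq |\pi| - 2 < \deg(\pi)$.

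For $\pi$ a noncrossing partition of degree $n \geq 1$, assuming $\mu$ has been defined on all noncrossing partitions of degree less than $n$, the equation $(\lambda \star \mu)(\pi) = \epsilon_\delta(\pi) = 0$ can be solved for $\mu(\pi)$ as
\[
\mu(\pi) = -\frac{1}{\lambda(L_N)} \sum_{\substack{\sim \in \eq_c[\pi] \\ \sim \neq \sim_0}} \lambda(\pi/\sim)\, \mu(\pi \mid_\cdot \sim),
\]
which is well defined since $\lambda(L_N) \neq 0$ and each $\pi \mid_\cdot \sim$ appearing on the right is a monomial whose individual factors are noncrossing partitions of degree less than $n$. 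Extending multiplicatively produces a character $\mu$ of $\alg$; as $\lambda \star \mu$ and $\epsilon_\delta$ are both characters agreeing on every noncrossing partition (the generators of the polynomial algebra $\alg$), they agree on all of $\alg$.

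A symmetric construction provides a left $\star$-inverse $\mu'$: one isolates instead the discrete equivalence on the index set of $\pi$, whose contribution to $\delta(\pi)$ is $\pi \otimes \prod_{b \in \pi} L_{|b|}$ and carries nonzero coefficient $\prod_{b \in \pi} \lambda(L_{|b|})$; this allows $\mu'(\pi)$ to be expressed in terms of $\mu'(\pi/\sim)$ for non-discrete $\sim$, each $\pi/\sim$ being a noncrossing partition with strictly fewer blocks than $\pi$. The standard monoid identity $\mu' = \mu' \star \epsilon_\delta = \mu' \star \lambda \star \mu = \epsilon_\delta \star \mu = \mu$ then shows the two one-sided inverses coincide, giving a two-sided $\star$-inverse of $\lambda$ and completing the proof.
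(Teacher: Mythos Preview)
Your proof is correct and follows the same underlying strategy as the paper's. The paper's proof is a one-line citation to \cite[Proposition 3.9]{Foissy45}, applied with the $\deg$-grading just established and the observation that one-block noncrossing partitions are group-like for $\delta$; your argument is essentially a self-contained reproof of that cited proposition in this particular setting. The ingredients are identical: the group-like property of one-block partitions for necessity, and an induction along the connected grading (isolating the extreme equivalence relations $\sim_0$ and $\sim_1$ in $\delta(\pi)$) for sufficiency. Your version has the advantage of being independent of the external reference, at the cost of being longer.
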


\begin{proof}
We apply \cite[Proposition 3.9]{Foissy45} with the preceding graduation, with the family of group-like elements of noncrossing partitions with only one block.
\end{proof}

In particular, for any noncrossing partition $\pi$ with only one block, $\lambda_0(\pi)=1$, so $\lambda_0$ is invertible. Its inverse is denoted by $\lambda_{\ncp}$. Therefore,
\begin{align}
\label{Eq13}\phi_{\ncp}&=\phi_0\leftsquigarrow \lambda_{\ncp}.
\end{align}
Consequently:

\begin{cor} \label{cor3.17}
For any noncrossing partition $\pi$,
\[\phi_\ncp(\pi)=\sum_{\sim\in \eq_c[\pi]} \lambda_{\ncp}(\pi\mid_\cdot \sim) \frac{\ext(\pi/\sim)}{\cl(\sim)!}X^{\cl(\sim)}.\]
In particular, the degree of $\phi_\ncp(\pi)$ is $|\pi|$, $\lambda_{\ncp}(\pi)$ is the coefficient of $X$ in $\phi_\ncp(\pi)$ and $\lambda_0(\pi)$ is the coefficient of $X^{|\pi|}$ in $\phi_\ncp(\pi)$.
\end{cor}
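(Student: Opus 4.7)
The plan is to unfold directly the identity (\ref{Eq13}), namely $\phi_\ncp = \phi_0 \leftsquigarrow \lambda_\ncp$, using the definition of the action $\leftsquigarrow$ recalled in Proposition \ref{prop1.2} together with the explicit formula (\ref{Eq8}) for $\delta$. Writing this out will give
\[\phi_\ncp(\pi) \;=\; (\phi_0 \otimes \lambda_\ncp) \circ \delta(\pi) \;=\; \sum_{\sim \in \eq_c[\pi]} \phi_0(\pi/\sim)\, \lambda_\ncp(\pi\mid_\cdot \sim),\]
and then substituting the explicit form $\phi_0(\pi/\sim) = \frac{\ext(\pi/\sim)}{|\pi/\sim|!}X^{|\pi/\sim|}$, together with the observation that the number of blocks of $\pi/\sim$ equals $\cl(\sim)$, will yield the announced formula essentially on the nose.

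For the assertions about the extremal coefficients, I will look at the extreme values of $\cl(\sim)$ appearing in this sum. The maximum is $|\pi|$, attained only by the equality relation on the blocks of $\pi$; for this choice $\pi/\sim = \pi$ and $\pi\mid_\cdot \sim$ is the product over blocks $b$ of $\pi$ of the one-block noncrossing partitions $\pi_{\mid \{b\}}$. To extract the coefficient of $X^{|\pi|}$ I will then need the auxiliary fact that $\lambda_\ncp(J) = 1$ whenever $J\in \ncp$ has a single block. This is the only genuinely non-formal step, and it will be handled as follows: any such $J$ is group-like for $\delta$ (i.e.\ $\delta(J) = J\otimes J$, as is visible in the examples listed after Proposition \ref{prop2.10}), so evaluating the identity $\lambda_\ncp \star \lambda_0 = \epsilon_\delta$ at $J$ yields $\lambda_\ncp(J)\,\lambda_0(J) = 1$, and since $\lambda_0(J) = 1$ this forces $\lambda_\ncp(J) = 1$. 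By multiplicativity of the character $\lambda_\ncp$ we then obtain $\lambda_\ncp(\pi\mid_\cdot \sim) = 1$, so the coefficient of $X^{|\pi|}$ reduces to $\ext(\pi)/|\pi|! = \lambda_0(\pi)$, which is strictly positive; this simultaneously identifies the leading coefficient and pins down the degree as $|\pi|$.

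Dually, the minimal value $\cl(\sim) = 1$ is realized only by the total equivalence, for which $\pi/\sim$ is a single-block noncrossing partition (hence $\ext(\pi/\sim) = 1$) and $\pi\mid_\cdot \sim = \pi$. Reading off the corresponding term of the sum will immediately give $\lambda_\ncp(\pi)$ as the coefficient of $X$. I do not anticipate any real obstacle: once the formula (\ref{Eq13}) is available the whole statement is a bookkeeping exercise, the only slightly subtle point being the group-like computation giving $\lambda_\ncp(J) = 1$ on single-block partitions.
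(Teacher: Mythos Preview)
Your proposal is correct and follows essentially the same approach as the paper's own proof: both unfold (\ref{Eq13}) via the explicit form of $\delta$ and then isolate the contributions of the equality relation $\sim_1$ (for the top-degree term) and of the total equivalence $\sim_0$ (for the linear term). If anything, your argument is slightly more detailed than the paper's, since you justify explicitly via the group-like property why $\lambda_\ncp$ takes the value $1$ on single-block noncrossing partitions, whereas the paper simply asserts this.
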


\begin{proof}
The formula for $\phi_\ncp$ is a direct consequence of (\ref{Eq13}). If $\sim\in \eq_c[\pi]$, then $\cl(\sim)\leq|\pi|$,
so the degree of $\phi_\ncp(\pi)$ is smaller than $|\pi|$. 
There is only one element $\sim_1\in \eq_c[\pi]$ such that $\cl(\sim_0)=|\pi|$: it is the equality of $\pi$.
Moreover, $\pi\mid\cdot \sim_1$ is a product of noncrossing partitions with a unique block, so $\lambda_{\ncp}(\pi \mid_\cdot \sim_1)=1$, whereas $\pi/\sim_1=\pi$. 
Therefore, the coefficient of $X^{|\pi|}$ is $\phi_\ncp(\pi)$ is $\lambda_0(\pi)$, which is non zero: the degree of $\phi_\ncp(\pi)$ is $|\pi|$.\\

There is only one element $\sim_0\in \eq_c[\pi]$ such that $\cl(\sim_0)=1$: for any $b,b'\in \pi$, $b\sim_0 b'$.
Moreover, $\pi\mid_\cdot \sim_0=\pi$ and $\pi/\sim_0$ has only one block, so $\dfrac{\ext(\pi/\sim_0)}{\cl(\sim_0)!}=1$: consequently, the coefficient of $X$ in $\phi_\ncp(\pi)$ is $\lambda_\ncp(\pi)$. 
\end{proof}

Let us now give an interpretation of the coefficient of $X^{|\pi|-1}$ in $\phi_{\ncp}(\pi)$.

\begin{defi}
Let $\pi$ a noncrossing partition and let $b$, $b'$ be two different blocks of $\pi$.
\begin{itemize}
\item We shall say that $(b,b')$ is a close pair if  $\max(b)<\min(b')$ and $b\sim_\pi b'$ (see Definition \ref{defi3.4}).
\item We shall say that $(b,b')$ is a nested pair if $b\leq_\pi b'$ and if for any $b''\in \pi$,
\[b\leq_\pi b''\leq_\pi b'\Longrightarrow b''=b \mbox{ or }b''=b'.\] 
\end{itemize}
If $(b,b')$ is a close or a nested pair of $\pi$, we denote by $\pi/(b,b')$ the partition which blocks are $b\sqcup b'$ and the other blocks of $\pi$. By definition of close and nested pairs, it is a noncrossing partition.
\end{defi}

\begin{cor}\label{cor3.19}
Let $\pi$ be a noncrossing partition with $n$ blocks. Then $\phi_\ncp(\pi)$ is a polynomial of degree $n$. The coefficient of $X^{n-1}$ is
\[-\frac{1}{(|\pi|-1)!}\left(\sum_{\mbox{\scriptsize $(b,b')$ close pair of $\pi$}}\ext(\pi/(b,b'))+\dfrac{1}{2}\sum_{\mbox{\scriptsize $(b,b')$ nested pair of $\pi$}}\ext(\pi/(b,b'))\right).\]
\end{cor}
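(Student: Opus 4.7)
The strategy is to apply Corollary \ref{cor3.17}, which expresses $\phi_\ncp(\pi)=\sum_{\sim\in \eq_c[\pi]} \lambda_\ncp(\pi\mid_\cdot \sim)\frac{\ext(\pi/\sim)}{\cl(\sim)!} X^{\cl(\sim)}$ and already states that this polynomial has degree $|\pi|=n$. It remains to extract the coefficient of $X^{n-1}$, which receives contributions exactly from equivalences $\sim\in\eq_c[\pi]$ with $\cl(\sim)=n-1$, that is, equivalences identifying exactly one unordered pair of distinct blocks $\{b,b'\}\subseteq\pi$ and leaving the other $n-2$ blocks as singletons. For such $\sim$, the quotient $\pi/\sim$ is the partition $\pi/(b,b')$ obtained by merging $b$ and $b'$, and the membership $\sim\in\eq_c[\pi]$ amounts to the merged partition being noncrossing.

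The first key step is to characterize the unordered pairs $\{b,b'\}$ whose merger is noncrossing. I will show that this happens if and only if, after ordering $(b,b')$ appropriately, it is either a close pair or a nested pair in the sense of the preceding definition. In the side-by-side case $\max(b)<\min(b')$, the merged block $b\sqcup b'$ crosses some other block $b''$ of $\pi$ precisely when $b''$ has an element in $]\max(b),\min(b')[$ and another element outside $[\min(b),\max(b')]$; the noncrossingness of $\pi$ combined with the chain condition $\max(b_i)+1=\min(b_{i+1})$ defining $\sim_\pi$ shows that this is excluded exactly when $b\sim_\pi b'$. In the nested case $b\leq_\pi b'$, a crossing appears after merging if and only if some block lies strictly between $b$ and $b'$ in the nesting order, which matches the nested pair condition.

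The second key step is to compute $\lambda_\ncp(\pi\mid_\cdot\sim)$. By multiplicativity, each singleton class of $\sim$ contributes a factor $\lambda_\ncp(\pi_{\mid b''})$ where $\pi_{\mid b''}$ is a one-block partition; since such partitions are group-like for $\delta$, the identity $\lambda_\ncp\star\lambda_0=\epsilon_\delta$ (together with $\lambda_0=1$ on one-block partitions) forces $\lambda_\ncp$ to take the value $1$ on them. The only non-trivial factor is $\lambda_\ncp(\sigma)$ where $\sigma:=\pi_{\mid\{b,b'\}}$ is a two-block noncrossing partition. Since $\eq_c[\sigma]$ contains only the identity and the coarsest equivalence, $\delta(\sigma)$ has just two terms; evaluating $\lambda_\ncp\star\lambda_0$ at $\sigma$ and using $\epsilon_\delta(\sigma)=0$ together with the previous computation on one-block partitions forces $\lambda_\ncp(\sigma)=-\lambda_0(\sigma)=-\ext(\sigma)/2$.

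To conclude, observe that $\ext(\pi_{\mid\{b,b'\}})=2$ when $(b,b')$ is a close pair (the two blocks remain side-by-side and incomparable in the induced nesting order after restriction and relabeling) and $\ext(\pi_{\mid\{b,b'\}})=1$ when $(b,b')$ is a nested pair (the containment is preserved after restriction), giving the values $\lambda_\ncp(\pi_{\mid\{b,b'\}})=-1$ and $-1/2$ respectively. Since each unordered mergeable pair corresponds to a unique ordered close or nested pair, summing these contributions over all valid $\sim$ produces the announced formula. The main obstacle is the characterization step, which requires a careful case analysis of how a third block $b''$ can interact with the merged block $b\sqcup b'$ to create a crossing, and of how the $\sim_\pi$-chain condition and the absence of intermediate blocks in the nesting order rule out such crossings.
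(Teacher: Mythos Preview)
Your proposal is correct and follows essentially the same route as the paper: both isolate the equivalences $\sim\in\eq_c[\pi]$ with $\cl(\sim)=n-1$, identify the two possible shapes of the unique doubleton class as close or nested pairs, and evaluate $\lambda_\ncp$ on the resulting two-block factor to obtain $-1$ and $-\tfrac{1}{2}$ respectively. The only cosmetic difference is that you spell out the characterization of mergeable pairs and derive $\lambda_\ncp(\sigma)=-\lambda_0(\sigma)$ from the convolution identity $\lambda_\ncp\star\lambda_0=\epsilon_\delta$, whereas the paper simply invokes that the two-block factor is skew-primitive for $\delta$---which is exactly the same computation in different words.
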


\begin{proof}
Let $\sim\in \eq_c[\pi]$ such that $\cl(\sim)=|\pi|-1$. There exists exactly one class of $\sim$ made of two blocks $b$ and $b'$, and the other classes are made of only one blocks. Two possibilities can occur:
\begin{itemize}
\item Up to a permutation of $b$ and $b'$, $(b,b')$ is a close  pair. Then $\pi/\sim=\pi/(b,b')$.
Moreover, $\pi\mid_\cdot \sim$ is a monomial made of noncrossing partitions with only one block and a noncrossing partition with two non nested blocks. Hence, $\lambda_0(\pi\mid_\cdot \sim)=1$. 
As $\pi\mid_\cdot \sim$ is a skew-primitive element for $\delta$, $\lambda_\ncp(\pi\mid_\cdot \sim)=-1$.
\item Up to a permutation of $b$ and $b'$, $(b,b')$ is a nested  pair. Then $\pi/\sim=\pi/(b,b')$. 
Moreover, $\pi\mid_\cdot \sim$ is a monomial made of noncrossing partitions with only one block and a noncrossing partition with two nested blocks. 
Hence, $\lambda_0(\pi\mid_\cdot \sim)=\dfrac{1}{2}$. As $\pi\mid_\cdot \sim$ is a skew-primitive element for $\delta$, $\lambda_\ncp(\pi\mid_\cdot \sim)=-\dfrac{1}{2}$.
\end{itemize}
The result on the coefficient of $X^{|\pi|-1}$ immediately follows.
\end{proof}

\begin{example}\label{ex3.5}
\begin{enumerate}
\item Let us consider $J_n$ (see Notation \ref{not2.4}), with $n\geq 2$. Then $P_n=\phi_\ncp(J_n)$ is a monic polynomial of degree $n$, as $\lambda_0(J_n)=1$. 
Moreover, $J_n$ has no nested pair, any any $({i},{j})$ with $1\leq i<j\leq n$ is a close pair of $J_n$. Therefore, the coefficient of $X^{n-1}$ in $J_n$ is
\begin{align*}
b_{n-1,n}&=-\sum_{1\leq i<j\leq n} \frac{1}{j-i}=\sum_{1\leq i\leq n-1} \sum_{k=1}^{n-i} \frac{1}{k}=\sum_{1\leq j\leq n-1} \sum_{k=1}^j \frac{1}{k}=n\zeta_n(1)-n.
\end{align*}
Consequently, $((n-1)!b_{n-1,n})_{n\geq 0}$ is the sequence of generalized Stirling numbers, see Entry A001705 of the OEIS \cite{Sloane}. 
\item See the Table at the end of the article for examples of $\lambda_\ncp(\pi)$.
\end{enumerate}\end{example}

Let us give more results about $\lambda_\ncp(J_n)$.

\begin{prop}\label{prop3.20}
The sequence $((n-1)! \lambda_\ncp(J_n))_{n\geq 1}$ is (up to the signs) the infinitesimal generator of  $X(1+X)$, see Entries A005119  and A179199 of the OEIS \cite{Sloane}. Consequently, 
\begin{align*}
&\forall n\geq 2,&\lambda_\ncp(J_n)&=-\frac{1}{n-1}\sum_{i=1}^{\left[\frac{n}{2}\right]} \binom{n-i+1}{i+1}\lambda_\ncp(J_{n-i}).
\end{align*}\end{prop}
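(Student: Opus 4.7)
The plan is to use Corollary~\ref{cor3.17} and Proposition~\ref{prop3.11} to identify the generating series $\sum_{n\geq 1}\lambda_\ncp(J_n)X^{n+1}$ with the infinitesimal generator $g(X)$ of $f(X)=X(1+X)$, and then to extract the recursion from the classical functional equation $g(f(X))=g(X)f'(X)$.

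By Corollary~\ref{cor3.17}, $\lambda_\ncp(J_n)=b_{1,n}$, the coefficient of $X$ in $P_n$. The proof of Proposition~\ref{prop3.11} gives $B^{(1)}=\ln(M)\cdot e_1$, with $B^{(1)}_n=b_{1,n-1}$. First I would check by direct computation that $M$ acts on formal power series without constant term by right composition with $f(X)=X(1+X)$: if $v(X)=\sum_{l\geq 1}v_l X^l$, then
\[
(Mv)(X)=\sum_{k\geq 1}\left(\sum_l M_{k,l}v_l\right)X^k=\sum_l v_l X^l(1+X)^l=v(f(X)).
\]
Because $f$ is tangent to the identity and $M$ has ones on the diagonal, $\ln M$ is well-defined as an entrywise-convergent series in $M-I$, and represents the infinitesimal generator of the one-parameter substitution semigroup of $f$. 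Concretely, $\ln M=g(X)\partial_X$, where $g\in X^2\mathbb{Q}[[X]]$ is the unique formal power series whose flow $\phi_t$ satisfies $\phi_1=f$. Applying $g(X)\partial_X$ to $v(X)=X$ (represented by $e_1$) returns $g(X)$, so $g(X)=\sum_{n\geq 1}\lambda_\ncp(J_n)X^{n+1}$; after normalizing by $(n-1)!$ and adjusting a sign, this matches the sequence indicated in the statement.

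For the recursion, I would use the standard identity $g(\phi_t(X))=g(X)\phi_t'(X)$, obtained by differentiating $\phi_{s+t}=\phi_s\circ\phi_t$ in $s$ at $s=0$. Taking $t=1$ gives the functional equation
\[
g(X+X^2)=g(X)(1+2X).
\]
Writing $g(X)=\sum_{k\geq 2}c_k X^k$ with $c_k=\lambda_\ncp(J_{k-1})$, the left-hand side expands as $\sum_m\bigl(\sum_{\lceil m/2\rceil\leq k\leq m}\binom{k}{m-k}c_k\bigr)X^m$, while the right-hand side is $\sum_m(c_m+2c_{m-1})X^m$. Comparing the coefficient of $X^m$: the $k=m$ term cancels $c_m$ and the $k=m-1$ term contributes $(m-1)c_{m-1}$, leaving
\[
(m-3)c_{m-1}=-\sum_{k=\lceil m/2\rceil}^{m-2}\binom{k}{m-k}c_k.
\]
Setting $m=n+2$ and relabelling $i=m-1-k$, so that $i$ runs from $1$ to $\lfloor n/2\rfloor$ and $\binom{k}{m-k}=\binom{n+1-i}{i+1}$, and using $c_{n+1}=\lambda_\ncp(J_n)$ and $c_{n+1-i}=\lambda_\ncp(J_{n-i})$, gives exactly the stated recursion after dividing by $n-1$.

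The main technical obstacle will be the rigorous identification $\ln M=g(X)\partial_X$: one must verify in the formal setting that the entrywise series $\ln M=\sum_{n\geq 1}(-1)^{n-1}(M-I)^n/n$ (which converges entrywise because $M-I$ is strictly lower triangular) coincides with the differential operator of the infinitesimal generator. This can be done either by a direct check on the basis vectors representing $X^l$, or by invoking the Lie theory of the substitution group of formal diffeomorphisms tangent to the identity, as alluded to in Remark~\ref{rk3.3}; once this step is secured, everything else is careful bookkeeping of the summation bounds.
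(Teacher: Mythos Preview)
Your proposal is correct and follows the same route as the paper for the first claim: both use Proposition~\ref{prop3.11} (via Corollary~\ref{cor3.17}) to identify $\lambda_\ncp(J_n)$ with the $n$-th entry of $\ln(M)\cdot e_1$, and then invoke the Riordan-array observation of Remark~\ref{rk3.3} that $M$ is right composition by $f(X)=X(1+X)$, so that $\ln(M)\cdot e_1$ encodes the infinitesimal generator of $f$. The paper's proof stops there and simply cites the recursive formula listed in OEIS entry A005119; your proposal goes further and actually \emph{derives} that recursion from the functional equation $g(f(X))=g(X)f'(X)$ satisfied by the infinitesimal generator. Your coefficient extraction is clean and the reindexing $m=n+2$, $i=n+1-k$ lands exactly on the stated formula. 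So the two arguments coincide in spirit, but yours is self-contained where the paper outsources the recursion to the OEIS; this is a genuine improvement in completeness, at the cost of having to justify (as you note) the identification $\ln M = g(X)\partial_X$ in the formal setting.
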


\begin{proof}
This is a direct consequence of Proposition \ref{prop3.11}, Remark \ref{rk3.3} and the recursive formula given in Entry A005119 of the OEIS \cite{Sloane}.
\end{proof}

\begin{example}
\[\begin{array}{|c||c|c|c|c|c|c|c|c|c|c|}
\hline n&1&2&3&4&5&6&7&8&9&10\\
\hline \hline &&&&&&&&&&\\[-2mm]
\lambda_\ncp(J_n)&1&-1&\dfrac{3}{2}&-\dfrac{8}{3}&\dfrac{31}{6}&-\dfrac{157}{15}&\dfrac{649}{30}&-\dfrac{9427}{210}&\dfrac{19423}{210}&-\dfrac{6576}{35}\\[-2mm]
&&&&&&&&&&\\
\hline\end{array}\]\end{example}

\begin{remark}
It could be conjectured from these values that the sign of $\lambda_\ncp(J_n)$ is $(-1)^{n+1}$ for any $n$. This is false. The first counterexample is 
\[\lambda_\ncp(J_{29})=-\dfrac{37449570182565026}{37182145}.\]
Other counterexamples can be found for $n=30$, $33$, $34$, $38$, $39$, $42$, $43$, $47$, $48$, $51$, $52$, $55$, $56$. 
\end{remark}

\subsection{Linear extensions}

Let us first generalize some combinatorial notions from noncrossing partitions to monomial of noncrossing partitions.

\begin{defi}
Let $P=\pi_1\cdot \ldots \cdot \pi_k$ be a monomial in noncrossing partitions. The set of blocks of $P$ is
\[\bloc(P)=\pi_1\sqcup \ldots \sqcup \pi_k.\]
We also put $|P|=|\bloc(P)|$.
The set $\bloc(P)$ is partially ordered by $\leq_P=\leq_{\pi_1}\sqcup \ldots \sqcup \leq_{\pi_k}$. An ideal of $P$ is a subset $I$ of $\bloc(P)$ such that
\begin{align*}
&\forall b,b'\in \bloc(P),&(b\in I\mbox{ and }b\leq_P b')\Longrightarrow (b'\in I).
\end{align*}\end{defi}

With this definition, we can write, for any monomial in noncrossing partitions,
\[\Delta(P)=\sum_{\mbox{\scriptsize $J$ ideal of $P$}} P_{\mid \bloc(P)\setminus J}\otimes P_{\mid_\cdot J}.\]

\begin{defi}
Let $P$ be a monomial in noncrossing partitions and let $n\geq 1$.
\begin{enumerate}
\item A $n$-linear extension of $P$ is a map $f:\bloc(P)\longrightarrow [n]$ such that
\begin{align*}
&\forall b,b'\in \bloc(P),&b\leq_P b'\Longrightarrow f(b)\leq f(b').
\end{align*}
The set of $n$-linear extensions of $P$ is denoted by $\lin(P,n)$.
\item A strict $n$-linear extension of $P$ is a map $f:\bloc(P)\longrightarrow [n]$ such that
\begin{align*}
&\forall b,b'\in \bloc(P),&(b\leq_P b'\mbox{ and }b\neq b')\Longrightarrow f(b)<f(b').
\end{align*}
The set of strict $n$-linear extensions of $P$ is denoted by $\linstr(P,n)$.
\end{enumerate}\end{defi}

\begin{remark}
Obviously, $\linstr(P,n)\subseteq \lin(P,n)$ for any $n\geq 1$.
\end{remark}

\begin{theo}\label{theo3.23}
\begin{enumerate}
\item For any monomial $P$ in noncrossing partitions, there exists a unique $\Lambda(P)(X) \in \K[X]$, such that
\begin{align*}
&\forall n\geq 1,&\Lambda(P)(n)&=|\lin(P,n)|.
\end{align*}
Moreover, the extension of $\Lambda$ as a linear map from $\alg$ to $\K[X]$ is a Hopf algebra map from $(\alg,m,\Delta)$ to $(\K[X],m,\Delta)$.
\item For any monomial $P$ in noncrossing partitions, there exists a unique $\Lambda^s(P)(X) \in \K[X]$, such that
\begin{align*}
&\forall n\geq 1,&\Lambda^s(P)(n)&=|\linstr(P,n)|.
\end{align*}
Moreover, the extension of $\Lambda^s$ as a linear map from $\alg$ to $\K[X]$ is a Hopf algebra map from $(\alg,m,\Delta)$ to $(\K[X],m,\Delta)$.
\end{enumerate}\end{theo}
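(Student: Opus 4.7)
The plan is to prove both parts of the theorem in parallel, since the arguments are structurally identical with ``weakly order-preserving'' replaced by ``strictly order-preserving'' throughout. I will write the argument for part (1), indicating the changes for part (2) where needed.

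First, I would establish existence and uniqueness of $\Lambda(P)$ by decomposing $\lin(P,n)$ according to image size. Let $N_k(P)$ denote the number of surjective weakly order-preserving maps $\bloc(P) \to [k]$; this number vanishes for $k > |P|$, and also for $k = 0$ unless $P = 1$. Any $f \in \lin(P,n)$ factors uniquely as such a surjection onto its image, followed by the order-preserving inclusion of that $k$-element subset into $[n]$, so
\[|\lin(P,n)| = \sum_{k=0}^{|P|} N_k(P) \binom{n}{k} = \sum_{k=0}^{|P|} N_k(P)\, H_k(n).\]
The right-hand side defines a polynomial $\Lambda(P)(X) \in \K[X]$ with the required values, and uniqueness follows since a polynomial is determined by its values on $\N_{>0}$. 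The strict analogue yields $\Lambda^s(P)$ via the surjective strictly order-preserving maps $N_k^s(P)$.

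For the algebra morphism property, note that $\bloc(P \cdot Q) = \bloc(P) \sqcup \bloc(Q)$ as a poset with no block of $P$ comparable to any block of $Q$; hence a map $f\colon \bloc(P \cdot Q) \to [n]$ is a (strict) $n$-linear extension if and only if its restrictions to $\bloc(P)$ and $\bloc(Q)$ are independently such. This gives $|\lin(P \cdot Q, n)| = |\lin(P,n)| \cdot |\lin(Q,n)|$, whence $\Lambda(P \cdot Q) = \Lambda(P) \Lambda(Q)$ by uniqueness, and trivially $\Lambda(1) = 1$.

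The central step is the coalgebra morphism property. Since $\Delta$ on $\K[X]$ acts by $Q(X) \mapsto Q(X+Y)$, it suffices to prove, for all integers $n, m \geq 1$,
\[|\lin(P, n+m)| = \sum_{\text{$J$ ideal of $P$}} |\lin(P_{\mid \bloc(P) \setminus J}, n)| \cdot |\lin(P_{\mid_\cdot J}, m)|,\]
the polynomial identity in $X, Y$ then following by Zariski density. I would set up the bijection by sending $f \in \lin(P, n+m)$ to the pair consisting of the ideal $J_f := f^{-1}(\{n+1, \ldots, n+m\})$ together with the restrictions $f_{|\bloc(P) \setminus J_f}$ and $f_{|J_f} - n$. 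The set $J_f$ is an ideal in the paper's sense: if $b \in J_f$ and $b \leq_P b'$, then $f(b') \geq f(b) \geq n+1$, so $b' \in J_f$. The two restrictions are weak $n$- and $m$-linear extensions of the indicated pieces, and conversely any ideal $J$ and any such pair of extensions reconstruct a unique element of $\lin(P, n+m)$. The strict case is verbatim, with $\leq$ replaced by $<$ where appropriate. Counit compatibility is immediate since $\Lambda(P)(0) = N_0(P)$ equals $1$ if $P = 1$ and $0$ otherwise, matching $\varepsilon_\Delta(P)$. The main obstacle is really just the bookkeeping in this bijection; once one observes that the preimage of an upper interval of $[n+m]$ under an order-preserving map is an upper set of $\bloc(P)$---that is, an ideal in this paper's convention---the identity drops out.
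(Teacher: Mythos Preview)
Your proof is correct and follows essentially the same approach as the paper: decompose $\lin(P,n)$ by image size to get a polynomial in the Hilbert basis, verify multiplicativity via the disjoint-union poset structure, and establish the coproduct compatibility by the bijection sending $f\in\lin(P,m+n)$ to the ideal $f^{-1}(\{m+1,\ldots,m+n\})$ together with the two restricted colorings. The paper likewise treats only the weak case and declares the strict case analogous.
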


\begin{proof}
We only prove the first point. The proof of the second point is very similar. Let us first consider a monomial $P$ in noncrossing partitions. 
For any $k\geq 1$, let us denote by $l_k(P)$ the number of surjective $k$-linear extensions of $P$. Note that if $k>|P|$, $l_k(P)=0$. Then, for any $n\geq 1$,
\[|\lin(P,n)|=\sum_{k=1}^{|P|} l_k(P) \binom{n}{k}.\]
We therefore take
\[\Lambda(P)=\sum_{k=1}^{|P|} l_k(P)H_k(X)\in \K[X].\]
Then, for any $n\geq 1$, $\lambda(P)(n)=|\lin(P,n)|$.\\

By convention, $\Lambda(1)=1$. If $P\neq 1$, then 
\[\Lambda(P)(0)=\sum_{k=1}^{|P|} l_k(P) \binom{0}{k}=0,\]
so $\varepsilon_\Delta \circ \Lambda(P)=0=\varepsilon_\Delta(P)$. Let us now prove the compatibility of $\Lambda$ with the products. Let  $P,Q$ be two monomials in noncrossing partitions. 
Then $\bloc(P\cdot Q)=\bloc(P)\sqcup \bloc(Q)$ and $\leq_{P\cdot Q}=\leq_P\sqcup \leq_Q$. As a consequence, for any $n\geq 1$,
\[\lin(P\cdot Q,n)=\{f\sqcup g\mid (f,g)\in \lin(P,n)\times \lin(Q,n)\}.\]
Taking the cardinalities, we obtain that for any $n\geq 1$,
\[\Lambda(P\cdot Q)(n)=\Lambda(P)(n)\Lambda(Q)(n).\]
This gives $\Lambda(P\cdot Q)=\Lambda(P)\Lambda(Q)$.\\

Let us finally prove the compatibility of $\Lambda$ with the coproducts $\Delta$. We fix a monomial $P$ in noncrossing partitions. Let $m,n\geq 1$.  If $f\in \lin(P,m+n)$, then:
\begin{itemize}
\item $f^{-1}(\{m+1,\ldots,m+n\})$ is an ideal of $P$, which we denote $I_f$.
\item $f_{\mid \bloc(P)\setminus I_f}$ is a $m$-linear extension of $P_{\mid \bloc(P)\setminus I_f}$.
\item $f_{\mid I}-m$ is a $n$-linear extension of $P_{\mid _\cdot I_f}$.
\end{itemize}
Conversely, if $I$ is an ideal of $P$, $f\in \lin(P_{\mid \bloc(P)\setminus I},m)$ and $g\in \lin(P_{\mid_\cdot I},n)$ the the following belongs to $\lin(P,m+n)$:
\[\left\{\begin{array}{rcl}
\bloc(P)&\longrightarrow&[m+n]\\
b&\longmapsto&\begin{cases}
f(b)\mbox{ if }b\notin I,\\
g(b)+m\mbox{ if }b\in I.
\end{cases}
\end{array}\right.\] 
Taking the cardinalities, we obtain that
\[\Lambda(P)(m+n)=\sum_{\mbox{\scriptsize $I$ ideal of $P$}}\Lambda(P_{\mid \bloc(P)\setminus I})(m)\Lambda(P_{\mid_\cdot I})(n).\]
Identifying $\K[X,Y]$ and $\K[X]^{\otimes 2}$, we obtain that for any $m,n\geq 1$,
\[\Delta\circ \Lambda(P)(m,n)=(\Lambda \otimes \Lambda)\circ \Delta(P)(m,n).\]
Therefore, $\Delta\circ \Lambda=(\Lambda \otimes \Lambda)\circ \Delta$.
\end{proof}

\begin{cor}\label{cor3.24}
Let $\lambda$ and $\lambda^s$ be the characters of $\alg$ defined by
\begin{align*}
\lambda&:\left\{\begin{array}{rcl}
\alg&\longrightarrow&\K\\
\pi\in \ncp&\longmapsto&1,
\end{array}\right.&
\lambda^s&:\left\{\begin{array}{rcl}
\alg&\longrightarrow&\K\\
\pi\in \ncp&\longmapsto&\begin{cases}
1\mbox{ if }\pi=\base(\pi),\\
0\mbox{ otherwise}.
\end{cases}
\end{array}\right.
\end{align*}
Then, with the notations of Proposition \ref{prop1.2} and Theorem \ref{theo1.4},
\begin{align}
\label{Eq14}
\Lambda&=\phi_\ncp\leftsquigarrow \lambda, &\Lambda^s&=\phi_\ncp\leftsquigarrow \lambda^s.
\end{align}\end{cor}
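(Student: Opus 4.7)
The plan is to invoke the bijection of Theorem \ref{theo1.4}(2) between $\Char(\alg)$ and $M_{\alg \rightarrow \K[X]}$, whose inverse sends a bialgebra morphism $\phi$ to the character $x \longmapsto \phi(x)(1)$. Since Theorem \ref{theo3.23} already tells us that both $\Lambda$ and $\Lambda^s$ are bialgebra morphisms from $(\alg,m,\Delta)$ to $(\K[X],m,\Delta)$, the identities (\ref{Eq14}) will follow as soon as we show that the characters $x\longmapsto \Lambda(x)(1)$ and $x\longmapsto \Lambda^s(x)(1)$ coincide respectively with $\lambda$ and $\lambda^s$.

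Since $\Lambda$, $\Lambda^s$, $\lambda$ and $\lambda^s$ are all algebra morphisms, it suffices to check equality of these characters on a single noncrossing partition $\pi \in \ncp$. For $\Lambda$, we have $\Lambda(\pi)(1)=|\lin(\pi,1)|$; but the unique map $f:\bloc(\pi)\longrightarrow [1]$ is always weakly order-preserving, so $|\lin(\pi,1)|=1=\lambda(\pi)$. For $\Lambda^s$, we have $\Lambda^s(\pi)(1)=|\linstr(\pi,1)|$; the unique constant map to $\{1\}$ is a strict extension if and only if the order $\leq_\pi$ is trivial on $\pi$, that is, no block is strictly nested into another, which is exactly the condition $\pi=\base(\pi)$. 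Hence $|\linstr(\pi,1)|=1$ if $\pi=\base(\pi)$ and $0$ otherwise, which is precisely $\lambda^s(\pi)$.

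By the uniqueness statement of Theorem \ref{theo1.4}(2), the two bialgebra morphisms $\Lambda$ and $\phi_\ncp\leftsquigarrow \lambda$ are equal, and similarly for $\Lambda^s$ and $\phi_\ncp \leftsquigarrow \lambda^s$. The only nontrivial point is to make sure that the bijection of Theorem \ref{theo1.4}(2) does apply in our setting, which requires $(\alg,m,\Delta,\delta)$ to be a connected double bialgebra; this is a consequence of the fact that the reduced coproduct $\tdelta$ strictly decreases the number of blocks of any monomial in noncrossing partitions, so that $\tdelta^{(n)}$ vanishes on elements of $\alg$ with at most $n$ blocks. No genuine obstacle is expected: everything reduces to evaluating the counting polynomials $\Lambda(\pi)$ and $\Lambda^s(\pi)$ at $X=1$.
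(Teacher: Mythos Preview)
Your proof is correct and follows essentially the same approach as the paper: both invoke the bijection of Theorem~\ref{theo1.4}(2), reduce to evaluating $\Lambda(\pi)(1)$ and $\Lambda^s(\pi)(1)$, and identify these with $\lambda(\pi)$ and $\lambda^s(\pi)$ by examining the unique map $\bloc(\pi)\to[1]$. Your added remark verifying that the connectedness hypothesis of Theorem~\ref{theo1.4} holds is a useful point of care that the paper's proof leaves implicit.
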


\begin{proof}
We use the bijection of Theorem \ref{theo1.4}. We define two elements of $\Char(\alg)$ by
\begin{align*}
&\forall x\in \alg,&\lambda(x)&=\Lambda(x)(1),&\lambda^s(x)&=\Lambda^s(x)(1).
\end{align*}
Theorem \ref{theo1.4} gives (\ref{Eq14}).
It remains to prove the formulas for $\lambda$ and $\lambda^s$. Let $\pi$ be a noncrossing partition. As the unique map $f:\bloc(\pi)\longrightarrow [1]$ is indeed a linear extension of $\pi$, 
\[\lambda(\pi)=\Lambda(\pi)(1)=|\lin(\pi,1)|=1.\]
Moreover, the unique map $f:\bloc(\pi)\longrightarrow [1]$ is a strict linear extension of $\pi$ if, and only if, $\pi=\base(\pi)$, so
\[\lambda^s(\pi)=\Lambda^s(\pi)(1)=|\linstr(\pi,1)|=\begin{cases}
1\mbox{ if }\pi=\base(\pi),\\
0\mbox{ otherwise}. 
\end{cases}\qedhere\] \end{proof}

\begin{prop}[\textbf{Duality principle}] For any $\pi\in \ncp$,
\[\Lambda(\pi)(X)=(-1)^{|\pi|}\Lambda^s(\pi)(-X).\]
\end{prop}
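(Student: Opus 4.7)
The plan is to recognize the claimed identity as Stanley's classical order polynomial reciprocity applied to the finite poset $P_\pi=(\bloc(\pi),\leq_\pi)$. By Theorem \ref{theo3.23}, for any $n\geq 1$, $\Lambda(\pi)(n)$ and $\Lambda^s(\pi)(n)$ are exactly the order polynomial and the strict order polynomial of $P_\pi$ evaluated at $n$.

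First, I would reduce to the case of a single noncrossing partition. Since $\Lambda$ and $\Lambda^s$ are algebra morphisms and $(-1)^{|P_1\cdot P_2|}=(-1)^{|P_1|}(-1)^{|P_2|}$, the map $P\longmapsto(-1)^{|P|}\Lambda^s(P)(-X)$ is multiplicative in $P$, so it suffices to check the identity on a single $\pi\in\ncp$. Write $k=|\pi|$ for the number of blocks.

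Second, I would apply Stanley's fundamental theorem of $P$-partitions with two opposite labelings of $P_\pi$. Fix a bijection $\omega:P_\pi\to[k]$ that is itself a linear extension of $P_\pi$ (natural labeling). The theorem yields
\[
\Lambda(\pi)(n)\;=\;\sum_{\tau}\binom{n+k-1-d(\tau)}{k},
\]
summed over linear extensions $\tau$ of $P_\pi$, where $d(\tau)$ is the descent number of the permutation $\omega\tau^{-1}\in\sym_k$. Using instead the reversed labeling $\omega^\ast:=k+1-\omega$, the same theorem identifies $(P_\pi,\omega^\ast)$-partitions with strict order-preserving maps and gives
\[
\Lambda^s(\pi)(n)\;=\;\sum_{\tau}\binom{n+d(\tau)}{k},
\]
since descents of $\omega^\ast\tau^{-1}$ are ascents of $\omega\tau^{-1}$, hence number $k-1-d(\tau)$.

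Third, I would close with the elementary reciprocity $\binom{-m}{k}=(-1)^k\binom{m+k-1}{k}$ applied termwise:
\[
(-1)^k\Lambda^s(\pi)(-X)\;=\;(-1)^k\sum_\tau\binom{-X+d(\tau)}{k}\;=\;\sum_\tau\binom{X+k-1-d(\tau)}{k}\;=\;\Lambda(\pi)(X),
\]
which is the desired polynomial identity. The main obstacle will be the $P$-partition bookkeeping in the second step: one must verify that the reversed labeling $\omega^\ast$ converts the notion of $(P_\pi,\omega^\ast)$-partition into strict order-preserving maps, and that the descent statistic for $\omega^\ast\tau^{-1}$ is the complementary $k-1-d(\tau)$. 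A self-contained alternative would be an induction on $k$ using a deletion recursion for $\Lambda$ and $\Lambda^s$ analogous to Proposition \ref{prop3.2} (removing a minimal block); the inductive step would reduce to checking compatibility with $X\mapsto -X$ and the sign $(-1)^k$, again via the same binomial reciprocity.
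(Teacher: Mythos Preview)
Your proof is correct: $\Lambda(\pi)$ and $\Lambda^s(\pi)$ are exactly the order polynomial and strict order polynomial of the finite poset $(\pi,\leq_\pi)$, and the claimed identity is precisely Stanley's reciprocity theorem for order polynomials. The $P$-partition argument you sketch is the standard proof of that theorem, and the bookkeeping you flag (that the reversed labeling converts weak to strict and complements the descent count) is routine.

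The paper, however, takes a genuinely different and more internal route. It observes that $\pi\mapsto(-1)^{|\pi|}\Lambda^s(\pi)(-X)$ is a bialgebra morphism $(\alg,m,\Delta)\to(\K[X],m,\Delta)$, being the composite $\phi_1\circ\Lambda^s\circ\phi_2$ of bialgebra maps (where $\phi_1$ is the antipode $P(X)\mapsto P(-X)$ of $\K[X]$ and $\phi_2$ is the sign involution $\pi\mapsto(-1)^{|\pi|}\pi$). By Theorem~\ref{theo1.4} this morphism must equal $\phi_\ncp\leftsquigarrow\nu$ for a unique character $\nu$, and it suffices to check $\nu=\lambda$, i.e.\ that $(-1)^{|\pi|}\Lambda^s(\pi)(-1)=1$ for every $\pi$. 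This last point is proved by induction on $|\pi|$ via the recursion $\Lambda^s(\pi)(X+1)=\sum_{B\subseteq\base(\pi)}\Lambda^s(\pi_{\mid_\cdot\pi\setminus B})(X)$ evaluated at $X=-1$.

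Your approach is shorter and situates the result in classical combinatorics, but imports Stanley's theorem as a black box. The paper's approach is self-contained and, more to the point, showcases the classification of bialgebra morphisms $\alg\to\K[X]$ by characters (Theorem~\ref{theo1.4}), which is one of the organizing themes of the article; the duality then boils down to a single character computation.
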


\begin{proof}
Let us consider the two maps
\begin{align*}
\phi_1&:\left\{\begin{array}{rcl}
\K[X]&\longrightarrow&\K[X]\\
P(X)&\longmapsto&P(-X),
\end{array}\right.&
\phi_2&:\left\{\begin{array}{rcl}
\alg&\longrightarrow&\alg\\
\pi\in \ncp&\longrightarrow&(-1)^{|\pi|}\pi.
\end{array}\right.\end{align*}
The map $\phi$ is a bialgebra isomorphism (it is in fact the antipode of $\K[X]$) and, as the bialgebra $(\alg,m,\Delta)$ is graded by the number of blocks, $\phi_2$ is also a bialgebra map.
By composition, $\phi=\phi_1\circ \Lambda^s\circ \phi_2$ is a bialgebra map. Moreover, for any $\pi\in \ncp$,
\[\phi(\pi)=(-1)^{|\pi|}\Lambda^s(\pi)(-X).\]
By Theorem \ref{theo1.4}, there exists a (unique) $\nu\in \Char(\alg)$ such that $\phi=\phi_\ncp \leftsquigarrow\nu$. Is is now enough to prove that $\nu=\lambda$. For any $\pi\in \ncp$,
\[\nu(\pi)=\phi(\pi)(1)=(-1)^{|\pi|}\Lambda^s(\pi)(-1),\]
so it is enough to prove that for any noncrossing partition $\pi$, $\Lambda^s(\pi)(-1)=(-1)^{|\pi|}$.We proceed by induction on $|\pi|$. If $|\pi|=1$, then $\Lambda^s(\pi)=X$ and the result holds. 
Let us assume that $|\pi|\geq 2$ and that the result holds for any noncrossing partition $\pi'$ with $|\pi'|<|\pi|$.
If $f\in \lin^s(\pi)$, observe that $f^{-1}(1)\subseteq \base(\pi)$. Conversely, if $B\subseteq \base(\pi)$ and $g\in \lin^s(\pi_{\mid_\cdot \pi\setminus B},n)$, one defines $f\in \lin^s(\pi,n+1)$ by
\begin{align*}
&\forall b\in \pi,&f(b)=\begin{cases}
1\mbox{ if }b\in B,\\
g(b)+1\mbox{ otherwise}.
\end{cases}\end{align*}
Consequently,
\[\Lambda^s(\pi)(X+1)=\sum_{B\subseteq \base(\pi)} \Lambda^s(\pi_{\mid_\cdot \pi\setminus B})(X).\]
Taking $X=-1$ and using the induction hypothesis on $\pi_{\mid_\cdot \pi\setminus B}$ when $B\neq \emptyset$, we obtain
\begin{align*}
\Lambda^s(\pi)(0)&=\epsilon_\delta(\pi)=0\\
&=\Lambda^s(\pi)(-1)+\sum_{\emptyset\subsetneq B\subseteq \base(\pi)} \Lambda^s(\pi_{\mid_\cdot \pi\setminus B})(-1)\\
&=\Lambda^s(\pi)(-1)+\sum_{\emptyset\subsetneq B\subseteq \base(\pi)} (-1)^{|\pi|-|B|}\\
&=\Lambda^s(\pi)(-1)+(-1)^{|\pi|}\left(\sum_{B\subseteq \base(\pi)}(-1)^{|B|} -1\right)\\
&=\Lambda^s(\pi)(-1)-(-1)^{|\pi|}.
\end{align*}
Note that $\displaystyle \sum_{B\subseteq \base(\pi)}(-1)^{|B|}=0$, as $\base(\pi)\neq \emptyset$. Finally, the result holds for $\pi$. 
\end{proof}

\begin{cor}\label{cor3.26}
For any $\pi\in \ncp$, $\lambda(\pi)=(-1)^{|\pi|} \lambda^s\circ S(\pi)$.
\end{cor}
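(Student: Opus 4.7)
The plan is to combine the duality principle (the Proposition just above) with the fact that any bialgebra morphism to $\K[X]$ automatically intertwines the antipodes. First I would invoke the duality principle and evaluate at $X=1$ to obtain
\[
\lambda(\pi)=\Lambda(\pi)(1)=(-1)^{|\pi|}\Lambda^s(\pi)(-1).
\]
Here I am using that $\lambda$ is characterized on $\ncp$ as $\pi\mapsto \Lambda(\pi)(1)$, which is how Corollary \ref{cor3.24} produces it from the bijection of Theorem \ref{theo1.4}.

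Next, I would recall that $\Lambda^s:(\alg,m,\Delta)\longrightarrow (\K[X],m,\Delta)$ is a Hopf algebra morphism by Theorem \ref{theo3.23}, so it commutes with the antipodes. The antipode of $(\K[X],m,\Delta)$ is the involution $P(X)\mapsto P(-X)$, hence
\[
\Lambda^s(S(\pi))(X)=\Lambda^s(\pi)(-X).
\]
Evaluating at $X=1$ and using once more that $\lambda^s$ coincides with $\Lambda^s(\cdot)(1)$ on all of $\alg$ (both being characters that agree on the generators $\pi\in\ncp$ by Corollary \ref{cor3.24}), I get
\[
\lambda^s(S(\pi))=\Lambda^s(S(\pi))(1)=\Lambda^s(\pi)(-1).
\]
Substituting into the first displayed equation yields $\lambda(\pi)=(-1)^{|\pi|}\lambda^s(S(\pi))$, as required.

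The only thing that really has content is the duality principle, which is already proved. Beyond that, the argument is a short assembly of three ingredients: the definitions $\lambda=\Lambda(\cdot)(1)$ and $\lambda^s=\Lambda^s(\cdot)(1)$ on $\alg$, the functoriality of antipodes for bialgebra morphisms applied to $\Lambda^s$, and the explicit antipode $P(X)\mapsto P(-X)$ of $\K[X]$. The one point to be a little careful about is that $S(\pi)$ is not a single partition but a signed sum of monomials, so one has to know that $\lambda^s$ and $\Lambda^s(\cdot)(1)$ agree as linear maps on all of $\alg$, not just on generators; this follows because both are algebra morphisms $\alg\to\K$ that agree on $\ncp$.
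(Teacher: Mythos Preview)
Your proof is correct and takes essentially the same approach as the paper: both use the duality principle together with the fact that the Hopf algebra morphism $\Lambda^s$ intertwines the antipodes (the antipode on $\K[X]$ being $P(X)\mapsto P(-X)$), and then evaluate at $X=1$. The paper phrases this at the level of maps, writing $\Lambda=\phi_1\circ \Lambda^s\circ \phi_2=S_{\K[X]}\circ \Lambda^s\circ \phi_2=\Lambda^s\circ S\circ \phi_2$ and only then evaluating, whereas you evaluate at $X=1$ first and then unwind; the content is identical.
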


\begin{proof}
From the proof of the duality principle, with the same notations, $\Lambda=\phi_1\circ \Lambda^s\circ \phi_2=S\circ \Lambda^s \circ \phi_2$. 
As $\Lambda^s$ is a bialgebra morphism, $\Lambda=\Lambda^s\circ S\circ \phi_2$. Evaluating this in $\pi$ and then taking $X=1$ gives the result. 
\end{proof}

\begin{notation}
By Proposition \ref{prop3.16}, both $\lambda$ and $\lambda^s$ are invertible for the convolution product $\star$, dual to the coproduct $\delta$. We denote by $\mu$ and $\mu^s$ their respective inverses. 
We immediately obtain
\begin{align*}
\phi_\ncp&=\Lambda \leftsquigarrow \mu,&\phi_\ncp&=\Lambda^s \leftsquigarrow \mu^s.
\end{align*}\end{notation}

Let us first compute $\mu^s$.

\begin{prop}\label{prop3.27}
For any $\pi\in \ncp$, $\mu^s(\pi)=\begin{cases}
0\mbox{ if }\pi\neq \base(\pi),\\
(-1)^{|\pi|-1} \mbox{ if }\pi=\base(\pi).
\end{cases}$
\end{prop}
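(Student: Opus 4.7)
The plan is to define a candidate character $\nu$ of $(\alg,m)$ by the formula of the statement---that is, $\nu(\pi)=0$ if $\pi\neq\base(\pi)$ and $\nu(\pi)=(-1)^{|\pi|-1}$ otherwise, extended multiplicatively---and then to verify $\nu\star\lambda^s=\epsilon_\delta$. Since $\lambda^s$ takes the value $1$ on every noncrossing partition with a single block, Proposition~\ref{prop3.16} guarantees that $\lambda^s$ admits a unique inverse for $\star$, so this equality forces $\nu=\mu^s$. As both sides are characters, it suffices to check the identity on $\pi\in\ncp$. Expanding
\[(\nu\star\lambda^s)(\pi)=\sum_{\sim\in\eq_c[\pi]}\nu(\pi/\sim)\,\lambda^s(\pi\mid_\cdot\sim),\]
a summand is nonzero iff (i) $\pi/\sim=\base(\pi/\sim)$ and (ii) every class $\overline{x}$ of $\sim$ is an antichain of $(\pi,\leq_\pi)$, i.e.\ $\pi_{\mid\overline{x}}=\base(\pi_{\mid\overline{x}})$; in that case the summand equals $(-1)^{c(\sim)-1}$ where $c(\sim)$ is the number of classes of $\sim$. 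I will call such $\sim$ \emph{contributing}.

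The main obstacle is the case $\pi\neq\base(\pi)$, where I need to show that no contributing $\sim$ exists, so that the sum vanishes and matches $\epsilon_\delta(\pi)=0$. Assume $\sim$ is contributing, and pick two blocks with $b'<_\pi b$ in $\pi$. Condition (ii) puts $b$ and $b'$ into distinct classes $\overline{x},\overline{y}$, and the merged blocks satisfy $b\subseteq[\min b',\max b']\subseteq[\min\pi_{\overline{y}},\max\pi_{\overline{y}}]$, so $\pi_{\overline{x}}$ meets the interval $[\min\pi_{\overline{y}},\max\pi_{\overline{y}}]$. If $\pi_{\overline{x}}$ is entirely contained in it, then $\pi_{\overline{y}}<_{\pi/\sim}\pi_{\overline{x}}$ is a strict nesting in $\pi/\sim$, contradicting (i). Otherwise $\pi_{\overline{x}}$ also has an element $a$ outside this interval; pairing $a$ with any leg $a'\in b\subseteq\pi_{\overline{x}}$ together with $\min\pi_{\overline{y}},\max\pi_{\overline{y}}\in\pi_{\overline{y}}$ gives a four-element alternation (e.g.\ $\min\pi_{\overline{y}}<a'<\max\pi_{\overline{y}}<a$ when $a>\max\pi_{\overline{y}}$, and symmetrically otherwise) witnessing a crossing in $\pi/\sim$, contradicting $\sim\in\eq_c[\pi]$.

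For $\pi=\base(\pi)$, the absence of nesting combined with the noncrossing property forces each block of $\pi$ to be an interval, so $\pi$ amounts to a decomposition of $[n]$ into $k=|\pi|$ consecutive-interval blocks $b_1<\cdots<b_k$. Condition (ii) is then automatic, while (i) holds exactly when every class of $\sim$ is a run of consecutive blocks (merging non-consecutive blocks would enclose an intermediate block and create nesting in $\pi/\sim$); hence contributing equivalences are in bijection with compositions of $k$, with $\binom{k-1}{c-1}$ of them having $c$ classes. For $k=1$ the sum equals $1=\epsilon_\delta(\pi)$, matching $\nu(\pi)=1$; for $k\geq 2$,
\[\sum_{c=1}^{k}\binom{k-1}{c-1}(-1)^{c-1}=(1-1)^{k-1}=0=\epsilon_\delta(\pi),\]
which completes the verification and therefore the proof. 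The nesting/crossing dichotomy used in the second paragraph is the only genuinely nontrivial step; the compositional enumeration in the third is routine once the geometric reduction is made.
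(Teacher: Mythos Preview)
Your proof is correct and takes a genuinely different route from the paper's. The paper works with the other-sided equation $\lambda^s\star\mu^s=\epsilon_\delta$, treats $\mu^s$ as unknown, and solves for it: for $\pi\neq\base(\pi)$ it proceeds by induction on the number of connected components of $\pi$; for $\pi=\base(\pi)$ it derives a recursion for $\mu^s(\pi)$ in terms of smaller values and solves it via a generating-function identity (showing $A=X/(1+X)$). Your approach---defining the candidate $\nu$ outright and verifying $\nu\star\lambda^s=\epsilon_\delta$---is more direct: the crossing/nesting dichotomy disposes of the $\pi\neq\base(\pi)$ case without any induction, and the binomial expansion $(1-1)^{k-1}$ handles $\pi=\base(\pi)$ without generating functions. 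One small point worth noting explicitly is that you verify $\nu$ is a \emph{left} inverse while $\mu^s$ is defined as the inverse of $\lambda^s$; your appeal to Proposition~\ref{prop3.16} (two-sided invertibility of $\lambda^s$) is what closes this gap, and it is used correctly.
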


\begin{proof}
For any noncrossing partition $\pi$, 
\[\lambda^s\star \mu^s(\pi)=\epsilon_\delta(\pi)=\sum_{\sim\in \eq_c[\pi]} \lambda^s(\pi/\sim)\mu^s(\pi).\]
We denote by $\pi_1,\ldots,\pi_n$ the connected components of $\pi$, totally ordered from left to right. 
As $\lambda^s$ only charges noncrossing partitions equal to their bases, in this sum we restrict ourselves with $\sim$ which classes are  unions of consecutive $\pi_j$, which gives
\[\epsilon_\delta(\pi)=\sum_{k=1}^n \sum_{\substack{n_1+\cdots+n_k=n,\\ n_1,\ldots,n_k\geq 1}}\mu^s(\pi_1\ldots \pi_{n_1})\ldots \mu^s(\pi_{n_1+\cdots+n_{k-1}+1}\ldots\pi_{n_1+\cdots+n_k}).\]

Let us assume firstly that $\pi\neq \base(\pi)$ and let us show that $\mu^s(\pi)=0$. Observe that $\epsilon_\delta(\pi)=0$. We proceed by induction on $n$. If $n=1$, we immediately obtain that $0=\mu^s(\pi)$.
If $n\geq 2$, for any $n_1,\ldots,n_k\geq 1$ such that $n_1+\cdots+n_k=n$, at least one of the noncrossing partitions $P_{n_1+\cdots+n_i+1}\ldots P_{n_1+\cdots+n_{i+1}}$ is not equal to its base. 
Therefore, if $k\geq 2$, by the induction hypothesis, it vanishes under $\mu^s$. We obtain $0=\mu^s(\pi)+0$, which ends this proof by induction.\\

Let us now assume that $\pi=\base(\pi)$. We define a sequence $(a_n)_{n\geq 1}$ by
\begin{align*}
a_n&=\begin{cases}
1\mbox{ if }n=1,\\
\displaystyle -\sum_{k=2}^n \sum_{\substack{n_1+\cdots+n_k=n,\\ n_1,\ldots,n_k\geq 1}}a_{n_1}\ldots a_{n_k} \mbox{ if }n\geq 2.
\end{cases}\end{align*}
Let us prove that $\mu^s(\pi)=a_{|\pi|}$ by induction on $|\pi|$. If $|\pi|=1$, then $\mu^s(\pi)=1=a_1$. If $|\pi|\geq 2$, the induction hypothesis gives
\begin{align*}
0&=\mu^s(\pi)+\sum_{k=2}^n \sum_{\substack{n_1+\cdots+n_k=n,\\ n_1,\ldots,n_k\geq 1}}\mu^s(\pi_1\ldots \pi_{n_1})\ldots \mu^s(\pi_{n_1+\cdots+n_{k-1}+1}\ldots\pi_{n_1+\cdots+n_k})\\
&=\mu^s(\pi)+\sum_{k=2}^n \sum_{\substack{n_1+\cdots+n_k=n,\\ n_1,\ldots,n_k\geq 1}}a_{n_1}\ldots a_{n_k}\\
&=\mu^s(\pi)-a_n,
\end{align*}
so $\mu^s(\pi)=a_n$. It remains to prove that $a_n=(-1)^{n-1}$ for any $n\geq 1$. Let us consider the formal series $A=\displaystyle \sum_{k=1}^\infty a_kX^k \in \mathbb{X}[[X]]$.
 By definition of the sequence $(a_n)_{n\geq 1}$, 
\begin{align*}
A&=X-\sum_{n=2}^\infty \sum_{k=2}^\infty  \sum_{\substack{n_1+\cdots+n_k=n,\\ n_1,\ldots,n_k\geq 1}}a_{n_1}\ldots a_{n_k}X^n\\
&=X-\sum_{k=2}^\infty  \sum_{\substack{n_1+\cdots+n_k=n,\\ n_1,\ldots,n_k\geq 1}}a_{n_1}X^{n_1}\ldots a_{n_k}X^{n_k}\\
&=X-\sum_{k=2}^\infty A^k\\
&=X-\frac{A^2}{1-A}.
\end{align*}
Hence, $\dfrac{A}{1-A}=X$ and $A=\dfrac{X}{1+X}$, which finally implies that $a_n=(-1)^{n-1}$ for any $n\geq 1$. 
\end{proof}

This finally gives:

\begin{prop}
For any noncrossing partition $\pi$, $\displaystyle \phi_\ncp(\pi)=\sum_{\sim \in \eq_c[\pi]} (-1)^{\bl(\pi\mid_\cdot \sim)-1}\Lambda^s(\pi/\sim)$.
\end{prop}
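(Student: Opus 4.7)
The identity is essentially an explicit unfolding of the already noted equality $\phi_\ncp = \Lambda^s \leftsquigarrow \mu^s$, together with the explicit formula for $\mu^s$ given in Proposition \ref{prop3.27}. The plan is therefore to invert, expand, and substitute.

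The starting point is $\Lambda^s = \phi_\ncp \leftsquigarrow \lambda^s$ from Corollary \ref{cor3.24}, combined with the fact that $\lambda^s$ is $\star$-invertible with inverse $\mu^s$: indeed $\lambda^s(\pi) = 1$ on every noncrossing partition $\pi$ with a single block (since such a $\pi$ satisfies $\pi=\base(\pi)$), so Proposition \ref{prop3.16} applies. Using the right-action property of $\leftsquigarrow$ from Proposition \ref{prop1.2}(2), I compute
\[\Lambda^s \leftsquigarrow \mu^s = (\phi_\ncp \leftsquigarrow \lambda^s) \leftsquigarrow \mu^s = \phi_\ncp \leftsquigarrow (\lambda^s \star \mu^s) = \phi_\ncp \leftsquigarrow \epsilon_\delta = \phi_\ncp.\]

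Next, I unfold the definition $\psi \leftsquigarrow \nu = (\psi \otimes \nu) \circ \delta$ and use formula (\ref{Eq8}) for the coproduct $\delta$ on noncrossing partitions to obtain
\[\phi_\ncp(\pi) = \sum_{\sim \in \eq_c[\pi]} \Lambda^s(\pi/\sim)\,\mu^s(\pi\mid_\cdot \sim).\]
Since $\mu^s$ is an algebra morphism, $\mu^s(\pi\mid_\cdot \sim)$ factorizes as a product over the noncrossing partition factors of the monomial $\pi\mid_\cdot \sim$. Proposition \ref{prop3.27} then evaluates each factor: it vanishes unless every factor equals its base, and otherwise contributes $(-1)^{|\pi_i|-1}$. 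Multiplying these signs over the factors produces exactly $(-1)^{\bl(\pi\mid_\cdot \sim)-1}$, with the vanishing cases absorbed into the sum as zero terms.

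No serious obstacle arises at this stage: the two nontrivial ingredients, namely the existence of the explicit formula for $\mu^s$ (Proposition \ref{prop3.27}) and the invertibility criterion (Proposition \ref{prop3.16}), have already been established, so the final step is just a bookkeeping argument using multiplicativity of the character $\mu^s$.
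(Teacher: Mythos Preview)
Your approach matches the paper's exactly: the proposition is stated right after the computation of $\mu^s$ with the transition ``This finally gives,'' so the intended proof is precisely the unfolding of $\phi_\ncp = \Lambda^s \leftsquigarrow \mu^s$ (already recorded in the Notation paragraph just before) via the explicit formula for $\mu^s$, which is what you carry out. One small bookkeeping point worth tightening: the product of the factorwise signs is $\prod_i (-1)^{|\pi_i|-1} = (-1)^{|\pi|-\lg(\pi\mid_\cdot\sim)}$, and the contributions where some factor fails $\pi_i=\base(\pi_i)$ are genuinely zero rather than ``absorbed'' into a $\pm 1$ sign, so the identification with $(-1)^{\bl(\pi\mid_\cdot\sim)-1}$ in the stated formula has to be read with that understanding.
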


In order to describe $\mu$, we shall introduce a family of characters:

\begin{prop}\label{prop3.29}
For any $q\in \K$, we define $\gamma_q\in \Char(\alg)$ by
\begin{align*}
\gamma_q&:\left\{\begin{array}{rcl}
\alg&\longrightarrow&\K\\
\pi\in \ncp&\longmapsto&\begin{cases}
q\mbox{ if }|\pi|=1,\\
0\mbox{ otherwise}.
\end{cases}\end{array}\right.\end{align*}
In particular, $\gamma_1=\epsilon_\delta$. Then:
\begin{enumerate}
\item For any $\lambda\in \Char(\alg)$, for any $\pi\in \ncp$, $\lambda \star \gamma_q(\pi)=q^{|\pi|} \lambda(\pi)$.
\item For any $q,q'\in \K$, $\gamma_{q'}\star \gamma_q=\gamma_{qq'}$. 
\item If $q\neq 0$, $\gamma_q$ is invertible for $\star$, and $\gamma_q^{\star-1}=\gamma_{q^{-1}}$.
\item  For any $\lambda\in \Char(\alg)$, for any $\pi\in \ncp$, $\gamma_q\star \lambda (\pi)=q \lambda(\pi)$.
\end{enumerate}\end{prop}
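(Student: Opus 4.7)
The plan is to reduce everything to part (1), which I would prove by direct expansion of $\delta$ via formula (\ref{Eq8}). For a noncrossing partition $\pi$ with index set $X$,
\[\lambda\star\gamma_q(\pi)=(\lambda\otimes\gamma_q)\circ\delta(\pi)=\sum_{\sim\in\eq_c[\pi]}\lambda(\pi/\sim)\,\gamma_q(\pi\mid_\cdot\sim).\]
Since $\gamma_q$ is a character, it splits across the product $\pi\mid_\cdot\sim=\prod^\cdot_{\overline{x}\in X/\sim}\pi_{\mid\overline{x}}$ as $\gamma_q(\pi\mid_\cdot\sim)=\prod_{\overline{x}}\gamma_q(\pi_{\mid\overline{x}})$. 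Now $\pi_{\mid\overline{x}}$ is a noncrossing partition with exactly $|\overline{x}|$ blocks, so each factor $\gamma_q(\pi_{\mid\overline{x}})$ vanishes unless $|\overline{x}|=1$. Therefore the only $\sim$ contributing to the sum is the equality relation $\sim_1$ on $X$, which indeed lies in $\eq_c[\pi]$ since $\pi/\sim_1=\pi$; for this $\sim_1$ one has $\pi\mid_\cdot\sim_1$ equal to a commutative product of $|\pi|$ one-block partitions, each evaluated to $q$. This gives $\lambda\star\gamma_q(\pi)=\lambda(\pi)\,q^{|\pi|}$, proving~(1).

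Points (2) and (3) then follow formally. Applying (1) with $\lambda=\gamma_{q'}$ yields $\gamma_{q'}\star\gamma_q(\pi)=q^{|\pi|}\gamma_{q'}(\pi)$, which equals $qq'$ when $|\pi|=1$ and $0$ otherwise, so it coincides with $\gamma_{qq'}(\pi)$ on every $\pi\in\ncp$; both $\gamma_{q'}\star\gamma_q$ and $\gamma_{qq'}$ being characters, they agree on all of $\alg$. Specialising (2) at $q'=q^{-1}$ and using that $\gamma_1=\epsilon_\delta$ is the unit of $\star$ (by definition of the counit of $\delta$) gives $\gamma_q\star\gamma_{q^{-1}}=\gamma_{q^{-1}}\star\gamma_q=\epsilon_\delta$, which is~(3).

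For (4), I would run the mirror-image expansion
\[\gamma_q\star\lambda(\pi)=\sum_{\sim\in\eq_c[\pi]}\gamma_q(\pi/\sim)\,\lambda(\pi\mid_\cdot\sim),\]
and observe that $\gamma_q(\pi/\sim)\neq 0$ now forces the noncrossing partition $\pi/\sim$ to consist of a single block, that is, $\sim$ has exactly one equivalence class. The unique such relation on $X$ is the coarse relation $\sim_0$, which lies in $\eq_c[\pi]$ since a one-block partition is trivially noncrossing; and for it one has $\pi\mid_\cdot\sim_0=\pi$ while $\gamma_q(\pi/\sim_0)=q$. The only surviving term is therefore $q\,\lambda(\pi)$. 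The argument involves no genuine obstacle; the only care required is the combinatorial identification, in each of the two expansions, of exactly which equivalence relations $\sim$ are picked out by the vanishing conditions on $\gamma_q$.
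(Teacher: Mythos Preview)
Your proof is correct and follows essentially the same approach as the paper: expand $\delta$ via (\ref{Eq8}), use the vanishing of $\gamma_q$ to single out the equality relation $\sim_1$ for part~(1) and the coarse relation $\sim_0$ for part~(4), and deduce (2) and (3) formally from (1). The arguments match the paper's almost line for line.
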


\begin{proof}
1. For any $\pi\in \ncp$,
\[\lambda\star \gamma_q(\pi)=\sum_{\sim\in \eq_c[\pi]}\lambda(\pi/\sim) \gamma_q(\pi\mid\sim).\]
Let us consider $\sim\in \eq_c[\pi]$ such that $\gamma_q(\pi\mid\sim)\neq 0$: $\pi\mid\sim$ is a monomial of noncrossing partitions with a single block, which means that $\sim$ is the equality $\sim_1$ of $\pi$. Therefore,
\[\lambda\star \gamma_q(\pi)=\lambda(\pi/\sim_1)\gamma_q(\pi\mid \sim_1)=\lambda(\pi)q^{|\pi|}.\]

2. In the particular case where $\lambda=\gamma_{q'}$, for any $\pi\in \ncp$,
\[\gamma_{q'}\star\gamma_q(\pi)=q^{|\pi|} \gamma_{q'}(\pi)=\begin{cases}
qq'\mbox{ if }|\pi|=1,\\
0\mbox{ otherwise}.
\end{cases}\]
So $\gamma_{q'}*\gamma_q=\gamma_{qq'}$.\\

3. Easy consequence of 2. \\

4. For any $\pi\in \ncp$,
\[\gamma_q \star \lambda(\pi)=\sum_{\sim\in \eq_c[\pi]}\gamma_q(\pi/\sim) \lambda(\pi\mid\sim).\]
Let us consider $\sim\in \eq_c[\pi]$ such that $\gamma_q(\pi/\sim)\neq 0$: $\pi/\sim$ is a noncrossing partition with a single block, which means that $\sim$ is equivalence $\sim_0$ on $\pi$ with a single class. 
Therefore,
\[\gamma_q \star \lambda(\pi)=\gamma_q(\pi/\sim_0)\lambda(\pi\mid_\cdot \sim_0)=q\lambda(\pi). \qedhere\]
\end{proof}

\begin{lemma}\label{lem3.30}
For any $\lambda\in \Char(\alg)$, $\lambda\circ S=\mu_\ncp \star \lambda$.
\end{lemma}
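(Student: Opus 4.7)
The plan is to apply $\lambda$ directly to the antipode formula provided by Theorem \ref{theo1.3}. Since $(\alg,m,\Delta,\delta)$ is a double bialgebra and $(\alg,m,\Delta)$ is a Hopf algebra whose antipode we have already identified, Theorem \ref{theo1.3}(1) gives us the identity of linear maps
\[
S \;=\; (\mu_\ncp \otimes \id_{\alg}) \circ \delta.
\]
So the whole proof reduces to precomposing $\lambda$ with both sides and recognising the result as $\mu_\ncp \star \lambda$.

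More precisely, for any $x \in \alg$ and writing $\delta(x) = \sum x^{(1)} \otimes x^{(2)}$ in Sweedler notation, I would compute
\[
\lambda \circ S(x) \;=\; \lambda\!\left(\sum \mu_\ncp(x^{(1)})\, x^{(2)}\right)
\;=\; \sum \mu_\ncp(x^{(1)})\, \lambda(x^{(2)}),
\]
using that $\mu_\ncp(x^{(1)}) \in \K$ together with $\K$-linearity of $\lambda$. By Proposition \ref{prop1.2}(1), the right-hand side is exactly
\[
(\mu_\ncp \otimes \lambda) \circ \delta(x) \;=\; \mu_\ncp \star \lambda(x).
\]
No induction, no combinatorics on noncrossing partitions, and no use of the $\Delta$-structure beyond the fact that it provided us the antipode formula is needed.

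There is no real obstacle here; the only mild point to verify is the purely formal identity $\lambda \circ (\mu_\ncp \otimes \id_\alg) = \mu_\ncp \otimes \lambda$ as maps $\alg \otimes \alg \to \K$, which is immediate from the definition of the tensor product of linear maps once one observes $\mu_\ncp$ takes values in the ground field. The statement is thus essentially a reformulation of Theorem \ref{theo1.3}(1), and it is this formulation that will be convenient for the forthcoming computation of $\mu$ via the dual pair $(\lambda,\mu)$.
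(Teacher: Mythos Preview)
Your proof is correct and follows exactly the same approach as the paper: apply $\lambda$ to the antipode formula $S=(\mu_\ncp\otimes\id_{\alg})\circ\delta$ from Theorem \ref{theo1.3} and recognise the result as the $\star$-convolution $\mu_\ncp\star\lambda$.
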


\begin{proof}
Recall that $S=(\mu_\ncp \otimes \id_{\alg})\circ \delta$. Therefore,
\begin{align*}
\lambda \circ S&=\lambda \circ (\mu_\ncp \otimes \id_{\alg})\circ \delta=(\mu_\ncp \otimes \lambda)\circ \delta=\mu_\ncp\star \lambda. \qedhere
\end{align*}\end{proof}

\begin{lemma}\label{lem3.31}
$\mu_\ncp\star\mu_\ncp=\epsilon_\delta$.
\end{lemma}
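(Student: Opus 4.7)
The plan is to transfer the computation from $\alg$ to $\K[X]$ through the double bialgebra morphism $\phi_\ncp$, which is the whole reason for having a universal polynomial invariant in the first place. For any $a\in \K$, let $\mathrm{ev}_a:\K[X]\longrightarrow \K$ denote the character $P\mapsto P(a)$. Since $\delta(X)=X\otimes X$ in $\K[X]$, an immediate multiplicative argument yields $\delta(P)=P(X_1X_2)$ for any polynomial $P$, so
\[\mathrm{ev}_a\star \mathrm{ev}_b=(\mathrm{ev}_a\otimes \mathrm{ev}_b)\circ \delta=\mathrm{ev}_{ab},\]
and counitality forces $\epsilon_\delta^{\K[X]}=\mathrm{ev}_1$. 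In particular, $\mathrm{ev}_{-1}\star \mathrm{ev}_{-1}=\mathrm{ev}_1=\epsilon_\delta^{\K[X]}$.

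Next, I will use that $\phi_\ncp:(\alg,m,\delta)\longrightarrow (\K[X],m,\delta)$ is a bialgebra morphism, which means $(\phi_\ncp\otimes \phi_\ncp)\circ \delta_{\alg}=\delta_{\K[X]}\circ \phi_\ncp$ and $\epsilon_\delta^{\K[X]}\circ \phi_\ncp=\epsilon_\delta^{\alg}$. This compatibility shows that pulling characters back along $\phi_\ncp$ respects $\star$: for any $\lambda,\lambda'\in \Char(\K[X])$,
\[(\lambda\circ \phi_\ncp)\star (\lambda'\circ \phi_\ncp)=(\lambda\otimes \lambda')\circ (\phi_\ncp\otimes \phi_\ncp)\circ \delta_{\alg}=(\lambda\star \lambda')\circ \phi_\ncp.\]
Since $\mu_\ncp=\mathrm{ev}_{-1}\circ \phi_\ncp$ by definition, specializing to $\lambda=\lambda'=\mathrm{ev}_{-1}$ and combining with the first paragraph gives
\[\mu_\ncp\star \mu_\ncp=(\mathrm{ev}_{-1}\star \mathrm{ev}_{-1})\circ \phi_\ncp=\mathrm{ev}_1\circ \phi_\ncp=\epsilon_\delta^{\K[X]}\circ \phi_\ncp=\epsilon_\delta^{\alg},\]
which is exactly the claim.

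There is no real obstacle here: the argument boils down to the fact that the evaluations $\mathrm{ev}_a$ form a group under $\star$ in $\K[X]$ isomorphic to $(\K^{*},\cdot)$ with identity $\mathrm{ev}_1$, and that this group structure is transported to $\alg$ by the double bialgebra morphism $\phi_\ncp$; the lemma is then just the reflection in $\alg$ of the trivial identity $(-1)\cdot(-1)=1$.
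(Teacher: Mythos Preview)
Your proof is correct but follows a genuinely different route from the paper's. The paper argues internally to $\alg$ via the antipode: it first invokes Lemma~\ref{lem3.30} ($\lambda\circ S=\mu_\ncp\star\lambda$) with $\lambda=\epsilon_\delta$ to get $\mu_\ncp=\epsilon_\delta\circ S$, then again with $\lambda=\mu_\ncp$ to obtain $\mu_\ncp\star\mu_\ncp=\epsilon_\delta\circ S^2=\epsilon_\delta$, using that $S^2=\id$ because $\alg$ is commutative. Your argument instead transports the computation to $\K[X]$ through the double bialgebra morphism $\phi_\ncp$: since $\delta(X)=X\otimes X$, the evaluations $\mathrm{ev}_a$ multiply via $\mathrm{ev}_a\star\mathrm{ev}_b=\mathrm{ev}_{ab}$, and pulling back along $\phi_\ncp$ reduces the lemma to $(-1)(-1)=1$. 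Your approach is more self-contained here (it does not need Lemma~\ref{lem3.30} or the involutivity of $S$) and makes transparent that the family of characters $x\mapsto\phi_\ncp(x)(a)$ is a homomorphic image of $(\K^*,\cdot)$ inside $(\Char(\alg),\star)$; the paper's approach, on the other hand, fits into the sequence of lemmas it is building toward Proposition~\ref{propr3.32}, where Lemma~\ref{lem3.30} is reused.
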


\begin{proof}
By Lemma \ref{lem3.30}, for $\lambda=\epsilon_\delta$, 
\[\epsilon_\delta\circ S=\mu_\ncp\star \epsilon_\delta=\mu_\ncp.\]
Still by this Lemma, with $\lambda=\mu_\ncp$,
\[\mu_\ncp \star \mu_\ncp=\mu_\ncp \circ S=\epsilon_\delta\circ S^2=\epsilon_\delta.\]
Indeed, as $\alg$ is commutative, $S^2=\id_{\alg}$. 
\end{proof}

\begin{prop}\label{propr3.32}
In $\Char(\alg)$,
$\lambda=\mu_\ncp\star \lambda^s\star \gamma_{-1}$ and $\mu=\gamma_{-1}\star \mu^s \star\mu_\ncp$. Moreover,
\begin{align*}
&\forall \pi\in \ncp,&\mu(\pi)&=\sum_{\substack{\sim \in \eq_c[\pi],\\ \pi/\sim=\base(\pi/\sim)}}(-1)^{\mathrm{cl}(\sim)}\mu_\ncp(P\mid_\cdot \sim),
\end{align*}
where $\mathrm{cl}(\sim)$ is the number of classes of $\sim$. 
\end{prop}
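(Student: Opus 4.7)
The plan is to assemble the three identities from the tools already collected: Corollary \ref{cor3.26} (linking $\lambda$ and $\lambda^s$ through the antipode), Lemma \ref{lem3.30} (which rewrites $\lambda'\circ S$ as $\mu_\ncp\star\lambda'$), Lemma \ref{lem3.31} ($\mu_\ncp$ is a $\star$-involution), Proposition \ref{prop3.29} (properties of the characters $\gamma_q$), and Proposition \ref{prop3.27} (explicit formula for $\mu^s$).

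\textbf{First identity.} Starting from Corollary \ref{cor3.26}, for any $\pi\in\ncp$,
\[\lambda(\pi)=(-1)^{|\pi|}\lambda^s\circ S(\pi).\]
By Lemma \ref{lem3.30} applied with $\lambda^s$, the right-hand side equals $(-1)^{|\pi|}(\mu_\ncp\star\lambda^s)(\pi)$. Now the character $\nu:=\mu_\ncp\star\lambda^s$ is a character of $(\alg,m)$ (the convolution of characters is a character), so Proposition \ref{prop3.29}(1) gives $(-1)^{|\pi|}\nu(\pi)=(\nu\star\gamma_{-1})(\pi)$. Combining yields $\lambda=\mu_\ncp\star\lambda^s\star\gamma_{-1}$ as characters of $\alg$.

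\textbf{Second identity.} Invert the first identity in the monoid $(\Char(\alg),\star)$. By Lemma \ref{lem3.31}, $\mu_\ncp^{\star-1}=\mu_\ncp$; by Proposition \ref{prop3.29}(2)-(3), $\gamma_{-1}^{\star-1}=\gamma_{(-1)^{-1}}=\gamma_{-1}$; and by definition, $(\lambda^s)^{\star-1}=\mu^s$. Reversing the order,
\[\mu=\lambda^{\star-1}=\gamma_{-1}^{\star-1}\star(\lambda^s)^{\star-1}\star\mu_\ncp^{\star-1}=\gamma_{-1}\star\mu^s\star\mu_\ncp.\]

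\textbf{Explicit formula.} Starting from $\mu=\gamma_{-1}\star(\mu^s\star\mu_\ncp)$, Proposition \ref{prop3.29}(4) gives, for any $\pi\in\ncp$,
\[\mu(\pi)=-(\mu^s\star\mu_\ncp)(\pi)=-\sum_{\sim\in\eq_c[\pi]}\mu^s(\pi/\sim)\,\mu_\ncp(\pi\mid_\cdot\sim).\]
By Proposition \ref{prop3.27}, the term corresponding to $\sim$ vanishes unless $\pi/\sim=\base(\pi/\sim)$, in which case $\mu^s(\pi/\sim)=(-1)^{|\pi/\sim|-1}=(-1)^{\mathrm{cl}(\sim)-1}$. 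Substituting and absorbing the outer minus sign produces exactly the announced formula
\[\mu(\pi)=\sum_{\substack{\sim\in\eq_c[\pi],\\ \pi/\sim=\base(\pi/\sim)}}(-1)^{\mathrm{cl}(\sim)}\mu_\ncp(\pi\mid_\cdot\sim).\]

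There is no real obstacle here beyond bookkeeping: the whole argument is a careful chaining of already-established facts. The only point that requires attention is the direction of the two one-sided actions in Proposition \ref{prop3.29}: part (1) treats $\gamma_q$ on the \emph{right}, used to produce the factor $(-1)^{|\pi|}$ in the first identity, while part (4) treats $\gamma_q$ on the \emph{left}, used to pull out the scalar $-1$ in the final computation. Getting those sides correct is what makes the three formulas line up.
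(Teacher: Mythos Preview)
Your proof is correct and follows essentially the same route as the paper: Corollary \ref{cor3.26} combined with Lemma \ref{lem3.30} and Proposition \ref{prop3.29}(1) for the first identity, inversion via Lemma \ref{lem3.31} and Proposition \ref{prop3.29}(3) for the second, and Proposition \ref{prop3.29}(4) together with Proposition \ref{prop3.27} for the explicit formula. You have simply spelled out each step in more detail.
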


\begin{proof}
By Corollary \ref{cor3.26}, Lemma \ref{lem3.30} and Proposition \ref{prop3.29}-1, $\lambda=\mu_\ncp \star \lambda^s\star \gamma_{-1}$. 
By proposition \ref{prop3.29}-3 and Lemma \ref{lem3.31}, the inverses (for $\star$) of $\gamma_{-1}$ and $\mu_\ncp$ are themselves, so
\[\mu=\lambda^{\star-1}=\gamma_{-1}^{\star-1}\star (\lambda^s)^{\star-1}\star \mu_\ncp^{\star-1}=\gamma_{-1}\star \mu^s \star \mu_\ncp. \]
By Proposition \ref{prop3.29}-4, for any noncrossing partition $\pi$,
\[\mu(\pi)=-\mu^s\star \mu_\ncp(\pi)=-\sum_{\sim \in \eq_c[\pi]}\mu^s(\pi/\sim)\mu_\ncp(P\mid_\cdot \sim).\]
The formula then comes from Proposition \ref{prop3.27}.
\end{proof}

\begin{example}
See  the Table at the end of the article for examples of $\mu^s(\pi)$ and $\mu(\pi)$.
\end{example}

\begin{cor}
Let $\pi\in \ncp$, such that $\pi=\base(\pi)$. Then $\mu(\pi)=(-1)^{|\pi|-1}\cat_{|\pi|-1}$. 
\end{cor}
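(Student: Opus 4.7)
The plan is to apply Proposition~\ref{propr3.32} and identify the equivalences $\sim\in\eq_c[\pi]$ that contribute to the sum when $\pi=\base(\pi)$. That formula yields
$$\mu(\pi)=\sum_{\substack{\sim\in\eq_c[\pi],\\ \pi/\sim=\base(\pi/\sim)}}(-1)^{\cl(\sim)}\mu_\ncp(\pi\mid_\cdot\sim).$$
First I would record the structural fact that every block of $\pi$ is an interval: if a block $b$ contained two elements $x<z$ but missed some $y$ with $x<y<z$, then the block $b'$ containing $y$ would have to satisfy $b'\subseteq[\min(b),\max(b)]$ by the noncrossing property (otherwise $b$ and $b'$ would cross), hence $b\leq_\pi b'$, contradicting the minimality of $b'$. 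So $\pi$ consists of $n=|\pi|$ consecutive interval blocks $b_1,\ldots,b_n$ tiling $[N]$.

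Given this, the equivalences $\sim\in\eq_c[\pi]$ with $\pi/\sim=\base(\pi/\sim)$ correspond bijectively to compositions $(n_1,\ldots,n_k)$ of $n$: each class of $\sim$ must be an interval of $\{b_1,\ldots,b_n\}$, since any gap with $b_i,b_j$ in the same class but some $b_l$ ($i<l<j$) outside would force, by noncrossingness of $\pi/\sim$, the class containing $b_l$ to lie inside $[\min(b_i),\max(b_j)]$ and thus be nested inside the class containing $b_i,b_j$, violating $\pi/\sim=\base(\pi/\sim)$. For a composition $(n_1,\ldots,n_k)$, $\pi\mid_\cdot\sim$ factors as $\pi^{(1)}\cdots\pi^{(k)}$, where $\pi^{(j)}$ is the restriction of $\pi$ to $n_j$ consecutive interval blocks. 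After relabeling, each $\pi^{(j)}$ is a noncrossing partition whose $n_j$ interval blocks are pairwise adjacent; consequently all its blocks form a single class of $\sim_{\pi^{(j)}}$ of size $n_j$, so Proposition~\ref{prop3.5} yields $\mu_\ncp(\pi^{(j)})=(-1)^{n_j}\cat_{n_j}$. Multiplicativity of $\mu_\ncp$ then gives $\mu_\ncp(\pi\mid_\cdot\sim)=(-1)^n\prod_j\cat_{n_j}$.

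Putting this together,
$$\mu(\pi)=(-1)^n\sum_{k=1}^n\sum_{\substack{n_1+\cdots+n_k=n\\ n_i\geq 1}}(-1)^k\cat_{n_1}\cdots\cat_{n_k},$$
and the remaining substantive task is to show that the inner double sum equals $-\cat_{n-1}$. I expect this to be the main (and only serious) obstacle, and I would handle it via generating functions: with $C(x)=\sum_{n\geq 0}\cat_n x^n=(1-\sqrt{1-4x})/(2x)$ and $\tilde C(x)=C(x)-1$, the double sum is the coefficient of $x^n$ in $-\tilde C/(1+\tilde C)=-1+1/C(x)$. Rationalizing gives $1/C(x)=(1+\sqrt{1-4x})/2=1-\sum_{n\geq 1}\cat_{n-1}x^n$, so the coefficient is $-\cat_{n-1}$, yielding $\mu(\pi)=(-1)^n\cdot(-\cat_{n-1})=(-1)^{|\pi|-1}\cat_{|\pi|-1}$. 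Alternatively, the same identity can be established by direct induction using the Catalan recurrence $\cat_n=\cat_{n-1}+\sum_{i=1}^{n-1}\cat_i\cat_{n-1-i}$ from~(\ref{Eq9}).
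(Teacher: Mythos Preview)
Your proof is correct and follows essentially the same route as the paper: both reduce via Proposition~\ref{propr3.32} to a sum over compositions of $n=|\pi|$, evaluate each factor $\mu_\ncp(\pi^{(j)})=(-1)^{n_j}\cat_{n_j}$ via Proposition~\ref{prop3.5}, and finish with the Catalan generating function. Your treatment is slightly more detailed on the structural side (you justify why blocks are intervals and why the admissible $\sim$ correspond to compositions, which the paper just asserts), and your generating-function bookkeeping differs cosmetically---you factor out $(-1)^n$ and work with $1/C(x)$, while the paper absorbs the signs into $C(-X)$---but the argument is the same.
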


\begin{proof}
Let $\pi\in \ncp$, such that $\pi=\base(\pi)$. The blocks of $\pi$ are denoted by $\pi_1,\ldots,\pi_n$, totally ordered from left to right.
If $\sim\in \eq_c[\pi]$ such that $\pi/\sim=\base(\pi/\sim)$, then the classes of $\sim$ are formed by consecutive $\pi_i$. This gives
\begin{align*}
\mu(\pi)&=\sum_{k=1}^n \sum_{\substack{n_1+\cdots+n_k=n,\\ n_1,\ldots,n_k\geq 1}} (-1)^k \mu_\ncp(\pi_1\ldots \pi_{n_1})\ldots \mu_\ncp(\pi_{n_1+\cdots+n_{k-1}+1}\ldots \pi_{n_1+\cdots+n_k})\\
&=\sum_{k=1}^n \sum_{\substack{n_1+\cdots+n_k=n,\\ n_1,\ldots,n_k\geq 1}} (-1)^{k+n_1+\cdots+n_k}\cat_{n_1}\ldots \cat_{n_k}.
\end{align*}
 We put, for any $n\geq 1$,
\[a_n=\sum_{k=1}^n \sum_{\substack{n_1+\cdots+n_k=n,\\ n_1,\ldots,n_k\geq 1}} (-1)^{k+n_1+\cdots+n_k}\cat_{n_1}\ldots \cat_{n_k},\]
so that $\mu(\pi)=a_{|\pi|}$. We consider the formal series
\begin{align*}
A&=\sum_{n=1}^\infty a_nX^n\in \mathbb{Q}[[X]],&C=\sum_{n=1}^\infty \cat_nX^n=\frac{1-\sqrt{1-4X}}{2X}-1 \in \mathbb{Q}[[X]].
\end{align*}
By definition of $a_n$,
\[A=\sum_{k=1}^\infty (-1)^k C(-X)^k=\frac{C(-X)}{1+C(-X)}.\]
Replacing the expression of $C$ in this formula and working out, we obtain
\[A=\frac{1-\sqrt{1+4X}}{2}=X(1+C(-X))=\sum_{n=1}^\infty (-1)^{n-1}\cat_{n-1}X^n,\]
which gives the result for $\mu(\pi)$. 
\end{proof}

\section{Combinatorial morphisms from noncrossing partitions to graphs}

We now turn to the bialgebras of hypergraphs or mixed graphs defined in \cite{Foissy44,Foissy45}. We obtain the following negative results:

\begin{prop}\label{prop4.1}
\begin{enumerate}
\item Let $\leftthreetimes \in \{\cap,\subset\}$.  There exists no double bialgebra morphism $\psi$ from $\alg$  to the double bialgebra of hypergraphs 
$(\calF[\bfH],m,\Delta,\delta^{(\leftthreetimes)})$ of \cite{Foissy44},  such that $\psi(J_3)$ is a sum of  hypergraphs.
\item There exists no double bialgebra morphism $\psi$ from $\alg$ to the double bialgebra of mixed graphs $\calH_\bfG$ of \cite{Foissy45}, such that $\psi(J_3)$ is a mixed graph.
\end{enumerate}\end{prop}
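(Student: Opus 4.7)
The plan is to exploit the uniqueness of the polynomial invariant (Theorem \ref{theo1.4}). If $\psi : \alg \to B$ is a double bialgebra morphism into a connected double bialgebra $B$ with polynomial invariant $\phi_B$, then $\phi_B \circ \psi : \alg \to \K[X]$ is itself a double bialgebra morphism, and therefore equals $\phi_\ncp$. Applied to $J_3 = \nctroiscinq$, this forces
\[
\phi_B(\psi(J_3)) \;=\; \phi_\ncp(J_3) \;=\; X(X-1)\!\left(X - \tfrac{3}{2}\right) \;=\; X^3 - \tfrac{5}{2}X^2 + \tfrac{3}{2}X,
\]
a polynomial which manifestly does not lie in $\mathbb{Z}[X]$. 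The strategy is to show that in each of the two situations the left-hand side is nevertheless forced to have integer coefficients, producing a contradiction.

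For the hypergraph case, for either choice of coproduct $\delta^{(\leftthreetimes)}$, the polynomial invariant $\phi_B$ of \cite{Foissy44} sends a single hypergraph $H$ to its generalized chromatic polynomial, which has integer coefficients in the monomial basis (by the inclusion-exclusion / deletion-contraction formula for this invariant given in \cite{Foissy44}). Hence, if $\psi(J_3) = \sum_i n_i H_i$ is a sum of hypergraphs, then $\phi_B(\psi(J_3)) = \sum_i n_i \phi_B(H_i) \in \mathbb{Z}[X]$, which contradicts the non-integrality of the coefficient $-5/2$.

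For the mixed graph case, suppose $\psi(J_3) = G$ is a single mixed graph, with vertex set $V$ of cardinality $n$, edge set $E$ and arrow set $A$. Inclusion-exclusion over $E$ expresses $\phi_B(G)(X)$ as a signed sum $\sum_{E' \subseteq E}(-1)^{|E'|}\,\Omega_{G/E'}(X)$, where $\Omega_{G/E'}$ is the strict order polynomial of the arrow poset of the mixed graph obtained by contracting the edges in $E'$. Only the term with $E' = \emptyset$ contributes to the top-degree term, yielding a leading coefficient of $e(G)/n!$, where $e(G)$ is the number of linear extensions of the poset on $V$ induced by $A$. Since $\phi_\ncp(J_3)$ has degree $3$ and leading coefficient $1$, one forces $n = 3$ and $e(G) = 3! = 6$, so the arrow poset on $V$ must be an antichain, i.e.\ $A = \emptyset$. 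But then $\phi_B(G)$ is the ordinary chromatic polynomial of the graph $(V,E)$, which lies in $\mathbb{Z}[X]$, again contradicting the displayed identity.

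The main obstacle is the verification of the integrality properties of the polynomial invariants attached to the bialgebras of \cite{Foissy44,Foissy45}. In the hypergraph case this is essentially standard; in the mixed-graph case it is more delicate, since the presence of arrows typically destroys integrality of $\phi_B$ (for instance, the single-arrow mixed graph $\grdeuxo$ yields $X(X-1)/2 \notin \mathbb{Z}[X]$). The key technical input is thus the leading-coefficient computation that eliminates the possibility of any arrow in $G$, after which the classical integrality of chromatic polynomials of graphs closes the argument.
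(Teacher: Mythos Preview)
Your argument for part 1 is essentially identical to the paper's: both observe that the composite $\phi_B \circ \psi$ must equal $\phi_\ncp$, compute $\phi_\ncp(J_3) = X^3 - \tfrac{5}{2}X^2 + \tfrac{3}{2}X$, and invoke the integrality of the hypergraph chromatic polynomial (this is \cite[Proposition 2.5]{Foissy44} in the paper).

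For part 2 you take a genuinely different route, and it works. The paper proceeds by casework: it first quotes \cite[Proposition 3.14]{Foissy45} to deduce that $G$ is acyclic on three vertices, then eliminates pure graphs by integrality, eliminates disconnected mixed graphs because $X^2 \nmid \phi_\ncp(J_3)$, excludes the mixed graphs already treated in \cite[Example 3.2]{Foissy45}, and finally checks the four remaining mixed triangles by evaluating at $X=2$. Your approach is more structural: the inclusion--exclusion over the unoriented edge set, together with the fact that the strict order polynomial of a poset on $n$ elements has leading term $e(P)X^n/n!$, pins down $|V|=3$ and $e(G)=6$, hence $A=\emptyset$, and then ordinary chromatic integrality finishes. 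This avoids the enumeration and the external citations to \cite{Foissy45} almost entirely; the price is that you must supply (or cite) the inclusion--exclusion identity $\phi_B(G) = \sum_{E'\subseteq E}(-1)^{|E'|}\Omega_{G/E'}$ and the standard leading-coefficient fact for strict order polynomials. One small point worth making explicit: your formula tacitly assumes the arrows of $G$ form a DAG; if they contain a directed cycle then every $\Omega_{G/E'}$ vanishes (contraction of unoriented edges cannot destroy a directed arrow-cycle), so $\phi_B(G)=0\neq \phi_\ncp(J_3)$, and this case is excluded before the leading-coefficient computation begins.
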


\begin{proof}
1. Let $\psi$ be such a morphism. Then $\phi_\ncp\circ \psi:\alg\longrightarrow \K[X]$ is a double bialgebra morphism by composition, so is equal to $\phi_{chr}$. 
Hence, there exists a sum of hypergraphs $G_1+\cdots+G_k=\psi(J_3)$ such that
\[\phi_{chr}(G_1+\cdots+G_k)=X^3 - \frac{5X^2}{2} + \frac{3X}{2}.\]
This contradicts \cite[Proposition 2.5]{Foissy44}, stating that the coefficients of $\phi_{chr}(H)$ are integers for any hypergraph. \\

2. Let $\psi$ be such a morphism. Then $\phi_\ncp\circ \psi:\alg\longrightarrow \K[X]$ is a double bialgebra morphism by composition, so is equal to $\phi_{chr}$. 
Hence, there exists a mixed graph $G=\psi(J_3)$ such that
\[\phi_{chr}(G)=X^3 - \frac{5X^2}{2} + \frac{3X}{2}.\]
By  \cite[Proposition 3.14]{Foissy45}, $G$ has no cycle and has three vertices. It is not a graph, as for any graph $G$, the coefficients of $\phi_\ncp(G)=P_{chr}(G)$ are integers. 
As it is not divisible by $X^2$, $G$ is connected. It is not one of the mixed graphs of \cite[Example 3.2]{Foissy45}. So $G$ is one of the four following mixed graphs:
\begin{align*}
G_1&=\xymatrix{\rond{}\ar@{-}[rd]\ar[rr]&&\rond{}\ar@{-}[ld]\\
&\rond{}&},&
G_2&=\xymatrix{\rond{}\ar[rd]\ar[rr]&&\rond{}\ar@{-}[ld]\\
&\rond{}&},\\
G_3&=\xymatrix{\rond{}\ar[rr]&&\rond{}\ar@{-}[ld]\\
&\rond{}\ar[ul]&},&
G_4&=\xymatrix{\rond{}\ar[rr]&&\rond{}\\
&\rond{}\ar[ul]\ar[ru]&}.
\end{align*}
But for any of these graphs, $\phi_{chr}(G_i)(2)=0$, whereas $\phi_{chr}(G)(2)=1$. So such a $\psi$ does not exist. 
\end{proof}

\begin{remark}
Of course, there exist double bialgebra morphisms from $\alg$ to $\calF[\bfH]$ or to $\calH_\bfG$. For example, observe that the two following define double bialgebra morphisms:
\begin{align*}
&\left\{\begin{array}{rcl}
\K[X]&\longrightarrow&\calF[\bfH]\\
X&\longmapsto&\bullet,
\end{array}\right.&
&\left\{\begin{array}{rcl}
\K[X]&\longrightarrow&\calH_\bfG\\
X&\longmapsto&\bullet.
\end{array}\right.
\end{align*}
The composition with $\phi_\ncp$ gives two double bialgebra morphisms, which of course do not satisfy the combinatorial conditions of Proposition \ref{prop4.1}.
\end{remark}

\newpage

\begin{table}[h]\[\begin{array}{|c||c|c|c|c|c|c|}
\hline \pi&\phi_\ncp(\pi)&\mu_\ncp(\pi)&\lambda_0(\pi)&\lambda_\ncp(\pi)&\mu^s(\pi)&\mu(\pi)\\
\hline\hline \ncun&X&-1&1&1&1&1\\
\hline \ncdeuxun&X&-1&1&1&1&1\\
\hline \ncdeuxdeux&X(X-1)&2&1&-1&-1&-1\\
\hline \nctroisun&X&-1&1&1&1&1\\
\hline \nctroisdeux&X(X-1)&2&1&-1&-1&-1\\
\hline \nctroistrois&X(X-1)&2&1&-1&-1&-1\\
\hline &&&&&\\[-4mm]
\nctroisquatre&\dfrac{X(X-1)}{2}&1&\dfrac{1}{2}&\dfrac{1}{2}&0&-1\\[2mm]
\hline &&&&&\\[-4mm]
\nctroiscinq&X(X-1)\left(X-\dfrac{3}{2}\right)&-5&1&\dfrac{3}{2}&1&2\\
\hline \ncquatreun&X&-1&1&1&1&1\\
\hline \ncquatredeux&X(X-1)&2&1&-1&-1&-1\\
\hline \ncquatretrois&X(X-1)&2&1&-1&-1&-1\\
\hline &&&&&\\[-4mm]
\ncquatrequatre&\dfrac{X(X-1)}{2}&1&\dfrac{1}{2}&-\dfrac{1}{2}&0&-1\\[2mm]
\hline &&&&&\\[-4mm]
\ncquatrecinq&\dfrac{X(X-1)}{2}&1&\dfrac{1}{2}&-\dfrac{1}{2}&0&-1\\[2mm]
\hline \ncquatresix&X(X-1)&2&1&-1&-1&-1\\
\hline &&&&&\\[-4mm]
\ncquatresept&\dfrac{X(X-1)}{2}&1&\dfrac{1}{2}&-\dfrac{1}{2}&0&-1\\[2mm]
\hline &&&&&\\[-4mm]
\ncquatrehuit &X(X-1)\left(X-\dfrac{3}{2}\right)&-5&1&\dfrac{3}{2}&1&2\\[2mm]
\hline &&&&&\\[-4mm]
\ncquatreneuf&X(X-1)\left(X-\dfrac{3}{2}\right)&-5&1&\dfrac{3}{2}&1&2\\[2mm]
\hline &&&&&\\[-4mm]
\ncquatredix&X(X-1)\left(X-\dfrac{3}{2}\right)&-5&1&\dfrac{3}{2}&1&2\\[2mm]
\hline \ncquatreonze&X(X-1)^2&-2&1&1&0&3\\
\hline \ncquatredouze&X(X-1)^2&-2&1&1&0&3\\
\hline &&&&&\\[-4mm]
\ncquatretreize&\dfrac{X(X-1)(X-2)}{3}&-2&\dfrac{1}{3}&\dfrac{2}{3}&0&2\\[2mm]
\hline &&&&&\\[-4mm]
\ncquatrequatorze&X(X-1)(X-2)\left(X-\dfrac{4}{3}\right)&14&1&-\dfrac{8}{3}&-1&-5\\[2mm]
\hline\end{array}\]
\caption{examples}
\end{table}

\newpage

\bibliographystyle{amsplain}
\bibliography{biblio}

\end{document}